\newtheorem{thm}{Theorem}[section]
\newtheorem{ex}{Example}[section]
\newtheorem{remark}{Remark}[section]
\newtheorem{lemma}{Lemma}[section]
\newcommand{\jr}{{j+\frac{1}{2}}}
\newcommand{\jl}{{j-\frac{1}{2}}}
\newcommand{\jlr}{{j \mp \frac{1}{2}}}
\newcommand{\ir}{{i+\frac{1}{2}}}
\newcommand{\il}{{i-\frac{1}{2}}}
\providecommand{\mean}[1]{ \{\mspace{-6.0mu}\{ #1 \}\mspace{-6.0mu}\} }
\providecommand{\jump}[1]{ [\mspace{-2.5mu}[ #1 ]\mspace{-2.5mu}] }
\title{An ultraweak-local discontinuous Galerkin method
for PDEs with high order spatial derivatives}
\author{Qi Tao\footnote{School of Mathematical Sciences, University
of Science and Technology of China, Hefei, Anhui 230026, P.R. China.
Email: taoq@mail.ustc.edu.cn. Research supported by China Scholarship
Council.},~~
Yan Xu\footnote{Corresponding author. School of Mathematical Sciences, University of Science
and Technology of China, Hefei, Anhui 230026, P.R. China.
Email: yxu@ustc.edu.cn. Research supported by National Numerical Windtunnel grants NNW2019ZT4-B08, Science Challenge Project TZZT2019-A2.3, NSFC grants 11722112.},
~~ Chi-Wang Shu\footnote{Division of Applied Mathematics, Brown University,
Providence, RI 02912, USA.
Email: shu@dam.brown.edu. Research supported by NSF grant DMS-1719410.}
}
\date{}
\begin{document}
\numberwithin{equation}{section}
\maketitle
%\small
\textbf{Abstract:}
In this paper, we develop a new discontinuous Galerkin method for
solving several types of partial differential equations (PDEs) with
high order spatial derivatives. We combine the advantages of local
discontinuous Galerkin (LDG) method and ultra-weak discontinuous
Galerkin (UWDG) method. Firstly, we rewrite the PDEs with high
order spatial derivatives into a lower order system, then apply
the UWDG method to the system.
We first consider the fourth order and fifth order nonlinear PDEs
in one space dimension,
and then extend our method to general high order problems and two
space dimensions.
The main advantage of our method over the LDG method is that we
have introduced fewer auxiliary variables, thereby reducing memory
and computational costs.  The main advantage of our method over
the UWDG method is that no internal penalty terms are necessary
in order to ensure stability for both even and odd order PDEs.
We prove stability of our method in the general
nonlinear case and provide optimal error estimates for linear PDEs
for the solution itself as well as for the auxiliary variables
approximating its derivatives.  A key ingredient in the proof
of the error estimates is the construction of the relationship
between the derivative and the element
interface jump of the numerical solution and
the auxiliary variable solution of the solution derivative.
With this relationship, we can then use the discrete Sobolev and
Poincar\'{e} inequalities to obtain the optimal error estimates.
The theoretical findings are confirmed by numerical experiments. \\
\\
\textbf{Keywords:} Discontinuous Galerkin (DG) method; High order equation;
Error estimate; Discrete Sobolev and Poincar\'{e} inequalities

\noindent\textbf{MSC (2010):} Primary 65M60; Secondary 35G25
\section{Introduction}

In this paper, we propose a new class of discontinuous Galerkin (DG)
methods for solving several types of partial differential equations (PDEs)
with high order spatial derivatives. The first two examples we
consider are:
\begin{itemize}
  \item  The fourth order equation
         \begin{align}\label{fourth_equation}
         u_{t}+(b(u)u_{xx})_{xx}=0, \quad b(u)\geq 0
         \end{align}
  \item  The fifth order equation
         \begin{align}\label{fifth_equation}
         u_{t}+f(u_{xx})_{xxx}  =0.
         \end{align}
\end{itemize}
%\large
The boundary conditions are assumed to be periodic for simplicity,
although most of our discussions can be adapted for other types
of boundary conditions. These equations are classical model equations
for many very important physical applications. The fourth order
problem has wide applications in the modeling of thin beams and plates,
strain gradient elasticity, and phase separation in binary mixtures
\cite{fourth}.  The fifth order nonlinear
evolution equation is known as the critical surface-tension model
\cite{fifth}.

Discontinuous Galerkin (DG) methods are a class of finite element
methods (FEMs) using completely discontinuous basis functions. The
first DG method was introduced in 1973 by Reed and Hill
\cite{reed1973triangular} in the framework of neutron transport.
It was later developed for time-dependent nonlinear hyperbolic
conservation laws, coupled with the Runge-Kutta time discretization,
by Cockburn et al \cite{cockburndiscontinuous1990,
cockburndiscontinuous1989, cockburndiscontinuous1998, ShuDG}. Since then,
the DG method has been intensively studied and successfully applied
to various problems in a wide range of applications due to its
flexibility with meshing, its compactness and its high parallel
efficiency. For the equations containing higher order spatial derivatives,
there are several different ways to approximate them by
discontinuous Galerkin methods.  One way is to use the local discontinuous
Galerkin (LDG) method \cite{cockburn1998local, dong, Ji, LiuLOCAL, XSrev, Yankdv,
Yanlocal}.  The idea of the LDG methods is to rewrite the
equations with higher order spatial derivatives into a first order
system, then apply the DG method to this system and design suitable numerical
fluxes to ensure stability.  Another way is to use the penalty methods
that add penalty terms at cell interfaces in the DG formulation
for numerical stability \cite{IP1,IP2}.  The third way is to use the ultra-weak DG
(UWDG) methods \cite{chengultraweak}.  It is based on
repeated integration by parts to move all spatial derivatives to
the test function in the weak formulation, and on a careful choice of
the numerical fluxes to ensure stability and optimal accuracy.
Unlike the traditional LDG method, the UWDG method can be applied
without introducing any auxiliary variables
or rewriting the original equation into a system.  Recently, Liu et al.
introduced a mixed DG method \cite{Liumix}, by first rewriting the
fourth order PDEs into a second order coupled system and then using
a direct DG discretization for the second order system.
$L^{2}$ stability was obtained without internal penalty.

In this paper, we design a new class of DG methods, combining the
advantages of LDG and UWDG methodologies, to solve PDEs
with high order spatial derivatives.  The two PDEs
(\ref{fourth_equation}) and (\ref{fifth_equation}) are used
first as examples to develop our method.  The method is then
extended to a wider class of PDEs both in one and in two
dimensions.  Similar to the mixed DG method in \cite{Liumix}, we
first rewrite the higher order equation into a lower order
(but not all first order) system.  For example, we rewrite the
fourth order problem into a second order system and rewrite the
fifth order problem into a system with two second order equations
and a first order equation, then we repeat the application of
integration parts, and choose suitable numerical fluxes to ensure
stability.  For the equations with spatial derivative order less than
or equal to three, our method will be the same as the LDG methods
or ultra-weak DG method, but for higher order PDEs our method
combines the advantages of the two type of methods, and is more
efficient. It is known that the proof of optimal accuracy
for LDG methods solving high order time-dependent wave equations is
very difficult.  The work in \cite{Xuoptimal} by Xu and Shu might
be the first to prove optimal order of accuracy in $L^{2}$ for
not only the solution but also the auxiliary variables.
In their work, the main idea is to derive energy stability for
the auxiliary variables in the
LDG scheme by using the scheme and its time derivatives.
In \cite{fu} Fu et al. identified a sub-family of the numerical
fluxes by choosing the coefficients in the linear combinations, so
that the solution and some auxiliary variables of the proposed DG
methods are optimally accurate in the $L^{2}$ norm.
In \cite{dong} Dong and Shu proved the optimal error estimates for the
higher even-order equations,
including the cases both in one dimension and in
multidimensional triangular meshes.  In this paper,
we prove the optimal error estimates for both the even order equations
and the odd order equations. The main idea is to use an important
relationship between the derivative and the element interface jump of
the numerical solution and the auxiliary variable numerical solution of
the derivative \cite{Wangimex2015, Wangimex2016}. Then we can obtain
suitable estimates to the auxiliary variables, which lead to
the optimal error estimates for both the numerical solution and
the auxiliary variables.  This is a different approach from
that in \cite{dong,Xuoptimal}, since in this way we do not
need to estimate many energy equations, and can get the relationship
between the solution and auxiliary variables directly.

The organization of the paper is as follows. In Section 2, we introduce
some notations and projections that will be used later. In Section 3,
the scheme for the fourth order equation is discussed, including
the discussion on the $L^{2}$ stability and optimal error estimates. In
Section 4, we follow the lines of Section 3 and consider the fifth
order equation. In Section 5, we extend the schemes in Sections 3 and 4
to arbitrary even and odd order equations, respectively.
We also extend the scheme for the fourth order equations to
multidimensional Cartesian meshes as an example of multi-dimensions in Section 6.
The theoretical results are confirmed numerically in Section 7. In
Section 8, we give some concluding remarks.

\section{Notations and projections}

In this section, we will introduce some notations, definitions and
projections that will be used later for the one-dimensional equations.

Throughout this paper, we adopt standard notations for the Sobolev
spaces such as $W^{m,p}(D)$ on the subdomain $D\in \Omega$ equipped
with the norm $\|\cdot\|_{m,p,D}$.  If $D=\Omega$, we omit the
index $D$; and if $p=2$, we set $W^{m,p}(D)=H^{m}(D)$,
$\|\cdot\|_{m,p,D}=\|\cdot\|_{m,D}$; and we use $\|\cdot\|_{D}$ to
denote the $L^{2}$ norm in $D$.

\subsection{Basic notations}

Let $\Omega=[0, 2\pi]$ and $0=x_{\frac{1}{2}}<x_{\frac{3}{2}}<\cdots
<x_{N+\frac{1}{2}}=2\pi$ be $N+1$ distinct points on $\Omega$. For
each positive integer $r$, we define $Z_{r}=(1,2,\cdots,r)$ and
denote by
$$
I_{j}=(x_{\jl},x_{\jr}),~~~x_{j}=\frac{1}{2}(x_{\jl}+x_{\jr}), ~~j\in Z_{N},
$$
the cells and cell centers, respectively. Let $h_{j}=x_{\jr}-x_{\jl}$,
and $h=\max\limits_{j} h_{j}$. We assume that the mesh is regular. Define
$$V_{h}=\{v_{h}:v_{h}|_{I_{j}}\in \mathcal{P}^{k}(I_{j}), j\in Z_{N}\}$$
to be the finite element space, where $\mathcal{P}^{k}$ denotes the space
of polynomials of degree at most $k$. For any $v\in V_{h}$,
$v_{\jr}^{+}$ and $v_{\jr}^{-}$ denote the right and left limit values
of $v$ at $\jr$, respectively. As usual, the average and the jump of
the function $v$ at $\jr$ are denoted as
$$\mean{v}_{\jr}=\frac{1}{2}
(v_{\jr}^{+}+v_{\jr}^{-}), \quad \jump{v}_{\jr}=v_{\jr}^{+}-v_{\jr}^{-},$$
respectively.

\subsection{Projections}

Next, we will introduce some projections used in the error estimates.
For example, we can choose the Gauss-Radau projections
$P_{h}^{\pm}$ into $V_{h}$, such that for any $u$ we have:
\begin{align}\label{guass_radau_projection}
\int_{I_{j}}uv_{h}dx=\int_{I_{j}}P_{h}^{\pm}uv_{h}dx,\quad
P_{h}^{\pm}u\left(x_{\jlr}^{\pm}\right)=u\left(x_{\jlr}\right),
\end{align}
$\forall j\in Z_{N}, v_{h}\in \mathcal{P}^{k-1}(I_{j})$.
Furthermore, for $k\geq1$ we can define the projection $P_{1h}^{\pm}$
into $V_{h}$ such that, for any $u$, the projection $P_{1h}^{\pm} u$
satisfies: $ \forall j\in Z_{N}$
\begin{align}\label{projection1.1}
\int_{I_{j}}uv_{h}dx=\int_{I_{j}}P_{1h}^{\pm}uv_{h}dx,
\end{align}
for any $v_{h}\in \mathcal{P}^{k-2}(I_{j})$ and
\begin{align}\label{projection1.2}
P_{1h}^{\pm}u\left(x_{\jlr}^{\pm}\right)=u\left(x_{\jlr}\right),
\quad (P_{1h}^{\pm}u)_{x}\left(x_{\jlr}^{\pm}\right)=u_{x}\left(x_{\jlr}
\right).
\end{align}
Similarly, for $k\geq2$ we can define the projection $P_{2h}^{\pm}$
into $V_{h}$ such that, for any $u$, it satisfies:
\begin{align}\label{projection2.1}
\int_{I{j}}uv_{h}dx=\int_{I{j}}P_{2h}^{\pm}uv_{h}dx,
\end{align}
and
\begin{align}\label{projection2.2}
P_{2h}^{\pm}u\left(x_{\jlr}^{\pm}\right)=u\left(x_{\jlr}\right),
~(P_{2h}^{\pm}u)_{x}\left(x_{\jlr}^{\pm}\right)=u_{x}\left(x_{\jlr}\right),
~(P_{2h}^{\pm}u)_{xx}\left(x_{\jlr}^{\pm}\right)=u_{xx}\left(x_{\jlr}\right),
\end{align}
for any $j\in Z_{N}$, $v_{h}\in \mathcal{P}^{k-3}(I_{j})$.
We will use different projections according to the need in each proof.
For all these projections, the following inequality holds
\cite{ciarlet2002finite}:
\begin{align}
\|u^{e}\|+h\|u^{e}\|_{\infty}+h^{\frac{1}{2}}\|u^{e}\|_{\Gamma_{h}}\leq Ch^{k+1}\|u\|_{k+1},
\end{align}
where $u^{e}=\pi_{h}^{\pm}u-u$, $\pi_{h}=P_{h},~P_{1h},~P_{2h}$, and $\Gamma_{h}$ denotes the set of boundary points of all elements $I_{j}$, and $C$ is a positive constant dependent on $k$ but not on $h$.

\section{The fourth order problem}

We start from the fourth order problem. Firstly, we consider the following one-dimensional nonlinear equation
\begin{align}\label{linear_forth}
&u_{t}+(b(u)u_{xx})_{xx}=0, \quad b(u)\geq 0,~~~~(x,t)\in [0,2\pi]\times (0,T],\\
&u(x,0)=u_{0}(x),~~~~ x\in \mathbb{R},
\end{align}
where $u_{0}(x)$ is a smooth function. Without loss of generality, we only consider the periodic boundary conditions.

\subsection{The numerical scheme}

Before we introduce our DG method, we rewrite the fourth order
equation (\ref{linear_forth}) into a system of second order equations
\begin{align}\label{linear_forth_system}
&u_{t}+v_{xx}=0,\\
&v-b(u)w=0,\\
&w-u_{xx}=0.
\end{align}
Notice that, unlike the LDG method, we stop at second
order equations and do not go all the way to a first order
system.
Our DG method is defined as follows: find
$u_{h},~v_{h},~w_{h}\in V_{h}$ such that for all $p,~s,~q\in V_{h}$, we have
\begin{align}
&((u_{h})_{t},p)_{j}+(v_{h},p_{xx})_{j}+\widetilde{v_{x}}p^{-}|_{\jr}-\widetilde{v_{x}}p^{+}|_{\jl}
-\widehat{v}p_{x}^{-}|_{\jr}+\widehat{v}p_{x}^{+}|_{\jl}=0,\label{linear_forth_scheme1}\\
&(v_{h},s)_{j}-(b(u_{h})w_{h},s)_{j}=0,\label{linear_forth_scheme3}\\
&(w_{h},q)_{j}-(u_{h},q_{xx})_{j}-\widetilde{u_{x}}q^{-}|_{\jr}+\widetilde{u_{x}}q^{+}|_{\jl}
+\widehat{u}q_{x}^{-}|_{\jr}-\widehat{u}q_{x}^{+}|_{\jl}=0.\label{linear_forth_scheme2}
\end{align}
Here $\displaystyle (u, v)_{j}=\int_{I_{j}}uvdx$ and
$\widehat{v}$, $\widetilde{v_{x}}$, $\widehat{u}$,
$\widetilde{u_{x}}$ are the numerical fluxes. The terms involving
these fluxes appear from repeated integration by parts, and a
suitable choice for these fluxes is the key ingredient for the stability
of the DG scheme. We can take either of the following four choices of
alternating fluxes for these four fluxes
\begin{align}\label{flux_linear_forth1}
\widehat{v}=v^{-}_h,~ \widetilde{v_{x}}=(v_{h})_{x}^{-},~
\widehat{u}=u_{h}^{+},~ \widetilde{u_{x}}=(u_{h})_{x}^{+};
\end{align}
\begin{align}\label{flux_linear_forth2}
\widehat{v}=v^{+}_h,~ \widetilde{v_{x}}=(v_{h})_{x}^{+},~
\widehat{u}=u_{h}^{-},~ \widetilde{u_{x}}=(u_{h})_{x}^{-};
\end{align}
\begin{align}\label{flux_linear_forth3}
\widehat{v}=v^{-}_h,~ \widetilde{v_{x}}=(v_{h})_{x}^{+},~
\widehat{u}=u_{h}^{-},~ \widetilde{u_{x}}=(u_{h})_{x}^{+};
\end{align}
\begin{align}\label{flux_linear_forth4}
\widehat{v}=v^{+}_h,~ \widetilde{v_{x}}=(v_{h})_{x}^{-},~
\widehat{u}=u_{h}^{+},~ \widetilde{u_{x}}=(u_{h})_{x}^{-}.
\end{align}
It is crucial that $\widehat{v}$ and $\widetilde{u_{x}}$ come
from the opposite sides, and $\widetilde{v_{x}}$ and
$\widehat{u}$ come from the opposite sides (alternating
fluxes).
\begin{remark}
For the numerical fluxes,  we can also  take the following numerical fluxes
\begin{subequations}\label{flux4}
\begin{align}\label{flux4_v}
&\widehat{v}=\theta v^-_{h} +(1-\theta)v^+_h,\quad \widetilde{v_{x}}=\theta (v_h)^-_x +(1-\theta )(v_h)^+_x,\\
&\widehat{u}=\theta u^+_{h} +(1-\theta)u^-_h,\quad \widetilde{u_{x}}=\theta (u_h)^+_x +(1-\theta)(u_h)^-_x,\label{flux4_u}
\end{align}
\end{subequations}
where $0\leq\theta\leq 1$.
For $\theta= 1/2$,  we would have the central fluxes as in \cite{Liumix} for the linear case.
We note that, unlike in the UWDG method \cite{chengultraweak}, here we
do not need to add extra internal penalty terms to ensure stability.
\end{remark}

\subsection{Stability analysis}

In this subsection, we will show the stability property of the
scheme (\ref{linear_forth_scheme1})-(\ref{linear_forth_scheme2}) with
the choice of fluxes (\ref{flux_linear_forth1})-(\ref{flux4}).
\begin{thm}\label{stability_linear_forth}
Our numerical scheme
(\ref{linear_forth_scheme1})-(\ref{linear_forth_scheme2}) with the
choice of fluxes (\ref{flux_linear_forth1})-(\ref{flux4})
is $L^{2}$ stable, i.e.
\begin{align}\label{stability_equation1}
\frac{1}{2}\frac{d}{dt}\int_{\Omega}u_{h}^{2}(x,t)dx+\int_{\Omega}b(u_{h})w_{h}^{2}(x,t)dx=0.
\end{align}
\end{thm}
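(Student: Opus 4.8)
The plan is to establish the energy identity (\ref{stability_equation1}) by testing the three equations of the scheme with carefully chosen test functions and summing over all cells, so that the numerical flux contributions telescope and cancel. Specifically, I would take $p = v_h$ in (\ref{linear_forth_scheme1}), $s = (u_h)_t$ in (\ref{linear_forth_scheme3}), and $q = -w_h$ in (\ref{linear_forth_scheme2}), then add the three resulting equations together. The motivation for these choices is that the volume terms must combine to leave only the desired time-derivative term and the dissipation term, while forcing the remaining volume and boundary terms to match up for cancellation. For instance, $((u_h)_t, v_h)_j$ from the first equation and $(v_h, (u_h)_t)_j$ (arising from $(v_h, s)_j$ with $s = (u_h)_t$) should pair against each other, and the $(b(u_h) w_h, (u_h)_t)_j$ term should combine with the time derivative of $u_h$ to yield the structure in (\ref{stability_equation1}).

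First I would write out the three tested equations explicitly and sum over $j \in Z_N$. The interior volume terms of the form $(v_h, p_{xx})_j$ and $(u_h, q_{xx})_j$ will be handled by noting how the repeated integration by parts was set up, and I expect the key cross terms to cancel pairwise. The time-derivative contribution $((u_h)_t, v_h)_j$ summed over $j$ combines with the term from equation (\ref{linear_forth_scheme3}) and the constitutive relation $v_h = b(u_h) w_h$ to produce $\frac{1}{2}\frac{d}{dt}\int_\Omega u_h^2\, dx$ plus the dissipation integral $\int_\Omega b(u_h) w_h^2\, dx$; the factor $\frac{1}{2}$ and the sign emerge from recognizing $((u_h)_t, u_h)_j = \frac{1}{2}\frac{d}{dt}(u_h, u_h)_j$.

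The main obstacle, and the technical heart of the proof, will be showing that after summing over all cells the boundary flux terms vanish. Each of the scheme equations contributes boundary terms involving $\widehat{v}$, $\widetilde{v_x}$, $\widehat{u}$, $\widetilde{u_x}$ paired with traces of the test functions. I would collect, at each interface $x_{\jr}$, the full jump contribution coming from the cell to its left (the $|_{\jr}$ terms) and from the cell to its right (the $|_{\jl}$ terms of the neighboring cell). The claim is that with the alternating flux choices (\ref{flux_linear_forth1})--(\ref{flux_linear_forth4}), and more generally the $\theta$-family (\ref{flux4}), these interface contributions telescope to zero under periodicity. The crucial structural fact is that $\widehat{v}$ is paired against $\widetilde{u_x}$ from the opposite side and $\widetilde{v_x}$ against $\widehat{u}$ from the opposite side, so that each single-valued numerical flux multiplies the jump $\jump{\cdot}$ of a test-function trace, and the products of a single-valued flux with a jump, summed over interfaces, cancel because of the matched one-sided/opposite-sided selection. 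I expect that the cleanest way to verify this is to reorganize the summed boundary terms interface-by-interface and confirm that each interface yields an expression of the form (numerical flux) times (jump), with the averages and jumps arranging so the total is identically zero; this is precisely why the alternating (opposite-side) structure of the fluxes is essential, as emphasized after (\ref{flux_linear_forth4}).

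Once the boundary terms are shown to cancel, the remaining identity is exactly (\ref{stability_equation1}), and since $b(u_h) \geq 0$ the dissipation term is nonnegative, giving $L^2$ stability. I would treat the four alternating choices (\ref{flux_linear_forth1})--(\ref{flux_linear_forth4}) and the $\theta$-family (\ref{flux4}) uniformly where possible, noting that the $\theta$-dependent averages still produce the needed cancellation because the opposite-side pairing is preserved for every admissible $\theta \in [0,1]$.
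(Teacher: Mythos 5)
Your overall strategy (test each equation with a suitable discrete function, sum over cells, and show the interface terms cancel due to the alternating flux structure) is the right one, but your specific choice of test functions is wrong, and the proof cannot be completed with it. The identity (\ref{stability_equation1}) contains the term $\tfrac{1}{2}\tfrac{d}{dt}\int_{\Omega}u_{h}^{2}\,dx=((u_{h})_{t},u_{h})_{\Omega}$, and the only place this can come from is taking $p=u_{h}$ in (\ref{linear_forth_scheme1}); with your choice $p=v_{h}$ you instead produce $((u_{h})_{t},v_{h})_{\Omega}$, and nothing else in the scheme can convert that into $((u_{h})_{t},u_{h})_{\Omega}$. Moreover, your claimed cancellation of $((u_{h})_{t},v_{h})_{j}$ against $(v_{h},(u_{h})_{t})_{j}$ fails: when you \emph{add} the three tested equations these two terms carry the same sign and combine to $2((u_{h})_{t},v_{h})_{j}$ rather than cancel. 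The dissipation term also cannot arise your way: $\int_{\Omega}b(u_{h})w_{h}^{2}\,dx$ must come from testing the algebraic relation (\ref{linear_forth_scheme3}) with $s=\pm w_{h}$, which yields $(v_{h},w_{h})_{\Omega}=(b(u_{h})w_{h},w_{h})_{\Omega}$; your term $(b(u_{h})w_{h},(u_{h})_{t})_{\Omega}$ has no mechanism to become $\int_{\Omega}b(u_{h})w_{h}^{2}\,dx$, since $b$ depends on $u_{h}$ and $(u_{h})_{t}\neq w_{h}$. Finally, your choices generate the volume terms $(v_{h},(v_{h})_{xx})_{j}$ and $(u_{h},(w_{h})_{xx})_{j}$, which after integration by parts leave $-\|(v_{h})_{x}\|^{2}$ and $-((u_{h})_{x},(w_{h})_{x})_{\Omega}$ with no counterpart to cancel them.

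The correct choices, used in the paper, are $p=u_{h}$ in (\ref{linear_forth_scheme1}), $s=-w_{h}$ in (\ref{linear_forth_scheme3}), and $q=v_{h}$ in (\ref{linear_forth_scheme2}). Then, after one integration by parts in each of (\ref{linear_forth_scheme1}) and (\ref{linear_forth_scheme2}), the volume cross terms $-((v_{h})_{x},(u_{h})_{x})_{\Omega}$ and $+((u_{h})_{x},(v_{h})_{x})_{\Omega}$ cancel exactly, the terms $(w_{h},v_{h})_{\Omega}$ and $-(v_{h},w_{h})_{\Omega}$ cancel, and what remains is
\begin{align*}
\frac{1}{2}\frac{d}{dt}\int_{\Omega}u_{h}^{2}\,dx+\int_{\Omega}b(u_{h})w_{h}^{2}\,dx
+B_{1}(v_{h},u_{h})+B_{2}(u_{h},v_{h})=0,
\end{align*}
where $B_{1}$, $B_{2}$ collect the interface terms. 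Your interface-by-interface cancellation argument (single-valued flux times a jump, with the opposite-side pairing of $\widehat{v}$ with $\widetilde{u_{x}}$ and $\widetilde{v_{x}}$ with $\widehat{u}$) is essentially the right way to verify $B_{1}(v_{h},u_{h})+B_{2}(u_{h},v_{h})=0$, but it must be applied to this combination of test functions, not to yours.
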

\begin{proof}
We integrate by parts in the scheme (\ref{linear_forth_scheme1}) and
(\ref{linear_forth_scheme2}) and sum over $j$ to obtain
\begin{align}
\label{shuadd1}
&((u_{h})_{t},p)_{\Omega}-((v_{h})_{x},p_{x})_{\Omega}+B_{1}(v_{h},p)=0,\\
\label{shuadd2}
&(v_{h},s)_{\Omega}-(b(u_{h})w_{h},s)_{\Omega}=0,\\
\label{shuadd3}
&(w_{h},q)_{\Omega}+((u_{h})_{x},q_{x})_{\Omega}+B_{2}(u_{h},q)=0,
\end{align}
where
\begin{align}
B_{1}(v_{h},p)=&\sum\limits_{j=1}^{N}\left({v_{h}^{-}p_{x}^{-}|_{\jr}
-v_{h}^{+}p_{x}^{+}|_{\jl}+\widetilde{v_{x}}p^{-}|_{\jr}
-\widetilde{v_{x}}p^{+}|_{\jl}
}\right.\nonumber\\
&\left.{-\widehat{v} p_{x}^{-}|_{\jr}+ \widehat{v}p_{x}^{+}|_{\jl}}
\right),\label{B1}\\
B_{2}(u_{h},q)=&\sum\limits_{j=1}^{N}\left({-u_{h}^{-}q_{x}^{-}|_{\jr}+u_{h}^{+}q_{x}^{+}|_{\jl}-\widetilde{u_{x}}q^{-}|_{\jr}+\widetilde{u_{x}}q^{+}|_{\jl}
}\right.\nonumber\\
&\left.{+\widehat{u}q_{x}^{-}|_{\jr}-\widehat{u}q_{x}^{+}|_{\jl}}\right).\label{B2}
\end{align}
Then we take $p=u_{h}$,~$s=-w_{h}$ and $q=v_{h}$ and add the
three equalities (\ref{shuadd1})-(\ref{shuadd3}) to obtain
\begin{align}
\frac{1}{2}\frac{d}{dt}\int_{\Omega}u_{h}^{2}(x,t)dx+\int_{\Omega}b(u_{h})w_{h}^{2}(x,t)dx+B_{1}(v_{h},u_{h})+B_{2}(u_{h},v_{h})=0.
\end{align}
However,
\begin{align}\label{B1+B2}
&~~~B_{1}(v_{h},u_{h})+B_{2}(u_{h},v_{h})\nonumber\\
&=\sum\limits_{j=1}^{N}\left( {v_{h}^{-}(u_{h})_{x}^{-}-v_{h}^{+}(u_{h})_{x}^{+}+\widetilde{v_{x}}u_{h}^{-}-\widetilde{v_{x}}u_{h}^{+}
-\widehat{v}({u}_{h})_{x}^{-}+\widehat{v}({u}_{h})_{x}^{+}}\right.\nonumber\\
&~~~\left.{-u_{h}^{-}(v_{h})_{x}^{-}+u_{h}^{+}(v_{h})_{x}^{+}-\widetilde{u_{x}}v_{h}^{-}
+\widetilde{u_{x}}v_{h}^{+}+\widehat{u}(v_{h})_{x}^{-}-\widehat{u}(v_{h})_{x}^{-}}\right)|_{\jl}\nonumber\\
&=0,
\end{align}
for all of our flux choices (\ref{flux_linear_forth1})-(\ref{flux4}). Then we have (\ref{stability_equation1}).
\end{proof}

\subsection{Error estimates}

In this subsection, we state the error estimates of our scheme in the
linear case, namely $b(u)=1$.  In this case, (\ref{linear_forth_scheme3})
in the scheme becomes a trivial statement $v_h=w_h$.

\begin{thm}\label{errorestimat_thm1}
Let $u$ be the exact solution of equation (\ref{linear_forth}) with
$b(u)=1$, and $w=u_{xx}$, which are sufficiently smooth with bounded
derivatives. Let $u_{h}$ and $w_{h}$ be solutions of
(\ref{linear_forth_scheme1}), (\ref{linear_forth_scheme2}), with
any choice of fluxes (\ref{flux_linear_forth1})-(\ref{flux_linear_forth4}), and let $V_{h}$ be the space of
piecewise polynomials $\mathcal{P}^{k},~k\geq 1$, then we have the
following error estimate:
\begin{align}\label{errorestimate_equation1}
\|u(t)-u_{h}(t)\|+\int_{0}^{t}\|w(t)-w_{h}(t)\|dt\leq Ch^{k+1},
\end{align}
where $C$ is a constant independent of $h$ and dependent
on $\|u\|_{k+3}$, and on $t$.
\end{thm}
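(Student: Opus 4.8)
The plan is to establish the optimal $L^2$ error estimate via the standard energy method for discontinuous Galerkin schemes, augmented with the key relationship between the auxiliary variable and the jump of the numerical solution. First I would set up the error equation. Since the scheme is consistent, the exact solution $u$ together with $w=u_{xx}$ satisfies the same weak formulation as $u_h, w_h$ (with $b=1$, so $v=w$). Subtracting, I obtain the error equation governing $e_u := u-u_h$ and $e_w := w-w_h$. I would then split each error using the Gauss–Radau projections from \eqref{projection1.1}--\eqref{projection1.2}, writing $e_u = \eta_u - \xi_u$ and $e_w = \eta_w - \xi_w$, where $\eta = u - P_{1h}^{\pm}u$ (the projection error, controlled by the interpolation estimate) and $\xi = u_h - P_{1h}^{\pm}u \in V_h$ (the part living in the finite element space). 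The projections are chosen to match the alternating fluxes so that the interface terms involving $\eta$ vanish.

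Next I would derive the energy identity. Taking test functions $p=\xi_u$, $q=\xi_w$ (with the $s$-equation now trivial since $v_h=w_h$) and combining as in the stability proof (Theorem~\ref{stability_linear_forth}), the $B_1$ and $B_2$ boundary terms involving $\xi$ cancel exactly, just as in \eqref{B1+B2}. This yields an identity of the schematic form
\begin{align}
\frac{1}{2}\frac{d}{dt}\|\xi_u\|^2 + \|\xi_w\|^2 = (\text{projection-error terms involving } \eta_u, \eta_w).
\end{align}
The right-hand side consists of $L^2$ inner products and interface terms in $\eta_u, \eta_w$, each bounded by $Ch^{k+1}$ times norms of $\xi_u, \xi_w$ (or their derivatives). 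The genuine difficulty, and the place where this paper departs from the approach in \cite{dong,Xuoptimal}, is controlling the $\|\xi_w\|$ contribution: a naive Young's inequality on terms pairing $\eta_w$ against $\xi_w$ would be fine, but terms that couple $\eta_u$ to derivatives or jumps of $\xi_w$ cannot be absorbed by $\|\xi_w\|^2$ alone, since $w$ is a second-derivative variable and an extra power of $h^{-1}$ from inverse inequalities would destroy optimality.

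The main obstacle, therefore, is estimating $\xi_w$ in terms of $\xi_u$ without losing a power of $h$. Here I would invoke the key relationship advertised in the introduction: from the scheme \eqref{linear_forth_scheme2} one can express $w_h$ (and hence $\xi_w$) through the numerical solution $u_h$ and its interface jumps, so that $\|\xi_w\|$ is controlled by a combination of $\|\xi_u\|$, $h^{-1}\|\jump{\xi_u}\|_{\Gamma_h}$-type quantities, and the projection error. By appealing to the discrete Sobolev and Poincar\'e inequalities, the jump and derivative terms in $\xi_u$ are bounded back in terms of $\|\xi_u\|$ itself (up to the optimal $Ch^{k+1}$ projection error), closing the loop. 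This is the crucial step that lets me recover an estimate for the auxiliary variable $w_h$ of the correct optimal order rather than the suboptimal order that a direct inverse-inequality argument would give.

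Finally I would assemble everything into a Gronwall argument. Substituting the bound for $\|\xi_w\|$ into the energy identity produces a differential inequality of the form $\frac{d}{dt}\|\xi_u\|^2 \le C\|\xi_u\|^2 + Ch^{2k+2}$, and applying Gronwall's inequality (using that the initial error $\|\xi_u(0)\|$ is $O(h^{k+1})$ by choosing $u_h(0)$ as the projection of $u_0$) yields $\|\xi_u(t)\| \le Ch^{k+1}$ for $t\in[0,T]$, with $C$ depending on $T$ and on $\|u\|_{k+3}$. The triangle inequality with the projection estimate \eqref{projection1.1} then gives $\|u-u_h\| \le Ch^{k+1}$. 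For the auxiliary variable, integrating the energy identity in time supplies control of $\int_0^t \|\xi_w\|\,dt$, and combined with the relationship above and the projection error for $w$, this delivers $\int_0^t \|w-w_h\|\,dt \le Ch^{k+1}$, completing the proof of \eqref{errorestimate_equation1}.
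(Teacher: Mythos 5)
Your opening moves---the error equations, the splitting $e_u=\eta_u-\xi_u$, $e_w=\eta_w-\xi_w$ with the projections $P_{1h}^{\pm}$ matched to the alternating fluxes, and the energy identity with test functions $\xi_u$, $\xi_w$---coincide with the paper's proof. But the ``genuine difficulty'' you then identify does not exist, and the device you introduce to overcome it is where your argument breaks. With $\eta_u=u-P_{1h}^{+}u$ and $\eta_w=w-P_{1h}^{-}w$, \emph{every} coupling of the $\eta$'s to the $\xi$'s other than $((\eta_u)_t,\xi_u)_{\Omega}$ and $(\eta_w,\xi_w)_{\Omega}$ vanishes identically: the volume terms $(\eta_w,(\xi_u)_{xx})_j$ and $(\eta_u,(\xi_w)_{xx})_j$ vanish because $(\xi_u)_{xx},(\xi_w)_{xx}\in\mathcal{P}^{k-2}(I_j)$ and the projections are orthogonal to $\mathcal{P}^{k-2}$ by (\ref{projection1.1}), while all interface terms vanish because $P_{1h}^{\pm}$ matches \emph{both} the value and the first derivative at exactly the traces selected by the fluxes, by (\ref{projection1.2}). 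Hence the energy identity is exactly
\begin{align*}
((\xi_u)_t,\xi_u)_{\Omega}+\|\xi_w\|^{2}
=((\eta_u)_t,\xi_u)_{\Omega}+(\eta_w,\xi_w)_{\Omega},
\end{align*}
and since the even-order (dissipative) structure puts $\|\xi_w\|^{2}$ on the left, Cauchy--Schwarz and Young's inequality absorb $(\eta_w,\xi_w)_{\Omega}$, Gronwall's inequality with $u_h(0)=P_{1h}^{+}u_0$ gives $\|\xi_u\|\le Ch^{k+1}$ and $\int_0^t\|\xi_w\|\,dt\le Ch^{k+1}$, and the triangle inequality finishes. No relation between $\xi_w$ and $\xi_u$ is needed for this theorem; this short argument is the paper's entire proof.

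The step you insert to control $\|\xi_w\|$ by $\|\xi_u\|$ is not only unnecessary but unsound as stated. The discrete Sobolev and Poincar\'{e} inequalities bound a piecewise polynomial by its broken derivatives and its jumps; they do not bound derivatives or $h^{-1}$-weighted jumps of $\xi_u$ ``back in terms of $\|\xi_u\|$''---that direction is an inverse estimate and costs negative powers of $h$ (indeed $h^{-3/2}$ on the jump terms that arise when you solve (\ref{linear_forth_scheme2}) for $w_h$), so your loop cannot close at order $h^{k+1}$. The jump-relationship-plus-discrete-Poincar\'{e} machinery (Lemmas \ref{fifth_lemmav}--\ref{fifth_lemmawx}) is the paper's tool for the \emph{odd}-order case (Theorem \ref{errorestimat_thm2}), where no $\|\xi_w\|^{2}$ term appears on the left of the energy identity; and even there the bound obtained is $\|\xi_w\|\le C\|(\xi_u)_t\|+Ch^{k+1}$---control by the \emph{time} derivative, not by $\|\xi_u\|$---which is precisely why that proof must also differentiate the error equations in time and run a second energy estimate for $(\xi_u)_t$. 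If you delete your middle step, what remains is the paper's correct proof; as written, the proposal hinges on an inequality that is false.
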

\begin{proof}
Without loss of generality, we choose the flux (\ref{flux_linear_forth1}).
Let
$$e_{u}=u-u_{h}, \quad e_{w}=w-w_{h}$$
 be the errors between the
numerical and exact solutions. Since $u$ and $w$ clearly satisfy
the scheme (\ref{linear_forth_scheme1}) and (\ref{linear_forth_scheme2})
as well, we can obtain the cell error equations: for all $p,~q\in V_{h}$
\begin{align}
&((e_{u})_{t},p)_{j}+(e_{w},p_{xx})_{j}+(e_{w})_{x}^{-}p^{-}|_{\jr}-(e_{w})_{x}^{-}p^{+}|_{\jl}
-e_{w}^{-}p_{x}^{-}|_{\jr}+e_{w}^{-}p_{x}^{+}|_{\jl}=0,\label{errorequation11}\\
&(e_{w},q)_{j}-(e_{u},q_{xx})_{j}-(e_{u})_{x}^{+}q^{-}|_{\jr}+(e_{u})_{x}^{+}q^{+}|_{\jl}
+e_{u}^{+}q_{x}^{-}|_{\jr}-e_{u}^{+}q_{x}^{+}|_{\jl}=0.\label{errorequation12}
\end{align}
Since $k\geq 1$, we can choose a projection $P_{1h}^{\pm}$ defined in (\ref{projection1.1}) and (\ref{projection1.2}).
Denote
\begin{align*}
\eta_{u}=u-P_{1h}^{+}u,~~\xi_{u}=u_{h}-P_{1h}^{+}u,~~\eta_{w}=w-P_{1h}^{-}w,~~\xi_{w}=w_{h}- P_{1h}^{-}w,
\end{align*}
and take $p=\xi_{w}$ and $q=\xi_{u}$ in (\ref{errorequation11}) and (\ref{errorequation12}) respectively. By the stability and property of projection $P_{1h}^{\pm}$ we have
\begin{align}
((\xi_{u})_{t}, \xi_{u})_{\Omega}+(\xi_{w}, \xi_{w})_{\Omega}=((\eta_{u})_{t}, \xi_{u})_{\Omega}+(\eta_{w}, \xi_{w})_{\Omega}.
\end{align}
%Since $(\xi_{u})_{xx},~(\xi_{w})_{xx}\in \mathcal{P}^{k-2}$ in $I_{j},~ j\in Z_{N}$.
Then
\begin{align*}
\frac{d}{dt}\|\xi_{u}\|^{2}+\|\xi_{w}\|^{2}\leq Ch^{k+1}\|\xi_{u}\|
+ Ch^{k+1}\|\xi_{w}\|.
\end{align*}
Next we use Gronwall's inequality and choose $u_{h}(0)=P_{1h}^{+}u(0)$
to obtain
\begin{align*}
\|\xi_{u}\|(t)+\int_{0}^{t}\|\xi_{w}\|dt\leq Ch^{k+1},
\end{align*}
and
\begin{align*}
\|e_{u}\|(t)+\int_{0}^{t}\|e_{w}\|dt\leq \|\xi_{u}\|(t)+\int_{0}^{t}\|\xi_{w}\|dt+\|\eta_{u}\|(t)+\int_{0}^{t}\|\eta_{w}\|dt\leq Ch^{k+1},
\end{align*}
where $C$ is a constant independent of $h$ and dependent on $\|u\|_{k+3}$, $\|u_{t}\|_{k+1}$, $k$ and $t$.
\end{proof}

\section{The fifth order problem}

Next we study the DG method for the following one-dimensional nonlinear
fifth order equation
\begin{align}\label{linear_fifth}
&u_{t}+f(u_{xx})_{xxx}=0, ~~~~(x,t)\in [0,2\pi]\times (0,T],\\
&u(x,0)=u_{0}(x),\quad x\in \mathbb{R},
\end{align}
with periodic boundary conditions, where $u_{0}(x)$ is a smooth function.

\subsection{The numerical scheme}

Similar to the fourth order problem (\ref{linear_forth}), we rewrite (\ref{linear_fifth}) into a system:
\begin{align}
&u_{t}+w_{xx}=0,\label{linear_fifth_system1}\\
&w-f(v)_{x}=0,\label{linear_fifth_system2}\\
&v-u_{xx}=0.\label{linear_fifth_system3}
\end{align}
Then our DG method is defined as follows: find
$u_{h},~w_{h},~v_{h}\in V_{h}$ such that for all $p,~s,~q\in V_{h}$, we have
\begin{align}
&((u_{h})_{t},p)_{j}+(w_{h},p_{xx})_{j}+\widetilde{w_{x}}p^{-}|_{\jr}-\widetilde{w_{x}}p^{+}|_{\jl}
-\widehat{w}p_{x}^{-}|_{\jr}+\widehat{w}p_{x}^{+}|_{\jl}=0,\label{linear_fifth_scheme1}\\
&(w_{h},s)_{j}+(f(v_{h}),s_{x})_{j}
-\widehat{f}s^{-}|_{\jr}+\widehat{f}s^{+}|_{\jl}=0,\label{linear_fifth_scheme2}\\
&(v_{h},q)_{j}-(u_{h},q_{xx})_{j}-\widetilde{u_{x}}q^{-}|_{\jr}+\widetilde{u_{x}}q^{+}|_{\jl}
+\widehat{u}q_{x}^{-}|_{\jr}-\widehat{u}q_{x}^{+}|_{\jl}=0.\label{linear_fifth_scheme3}
\end{align}
Here $\widehat{w}$, $\widetilde{w_{x}}$, $\widehat{f}$,
$\widehat{u}$, $\widetilde{u_{x}}$ are numerical fluxes. We can take either of the following two choices for  these five fluxes
\begin{align}\label{flux_linear_fifth1}
\widehat{w}=w_{h}^{-},~\widetilde{w_{x}}=(w_{h})_{x}^{-},~\widehat{f}=\widehat{f}(v_{h}^{-},v_{h}^{+}),~ \widehat{u}=u_{h}^{+},~ \widetilde{u_{x}}=(u_{h})_{x}^{+},
\end{align}
or
\begin{align}\label{flux_linear_fifth2}
\widehat{w}=w_{h}^{+},~ \widetilde{w_{x}}=(w_{h})_{x}^{+},~\widehat{f}=\widehat{f}(v_{h}^{-},v_{h}^{+}),~ \widehat{u}=u_{h}^{-},~ \widetilde{u_{x}}=(u_{h})_{x}^{-},
\end{align}
where $\widehat{f}(v^{-},v^{+})$ is a monotone flux for $f(v)$. Here monotone flux means that the function $\widehat{f}$ is a non-decreasing function of its first argument and a non-increasing function of its second argument. It is also assumed to be at least Lipschitz continuous with respect to each argument and to be consistent with the physical flux $f(v)$ in the sense that $\widehat{f}(v,v)=f(v)$.

\begin{remark}
It is crucial that $\widehat{w}$ and $\widetilde{u_{x}}$ come from the
opposite sides, $\widetilde{w_{x}}$ and $\widehat{u}$
come from the opposite sides. We have at least four choices of these alternating fluxes or similar fluxes in (\ref{flux4}), as in fourth order case. But here we just give the rule of alternating, and list part of them for simplicity.
\end{remark}
\subsection{Stability analysis}

In this subsection, we will show the stability property of the
scheme (\ref{linear_fifth_scheme1})-(\ref{linear_fifth_scheme3}) with
the choice of fluxes (\ref{flux_linear_fifth1})
or (\ref{flux_linear_fifth2}).

\begin{thm}\label{stability_linear_fifth}
Our scheme (\ref{linear_fifth_scheme1}), (\ref{linear_fifth_scheme2})
and (\ref{linear_fifth_scheme3}) with the choice of fluxes
(\ref{flux_linear_fifth1}) or (\ref{flux_linear_fifth2}) is stable, i.e
\begin{align}\label{stability_equation2}
\frac{1}{2}\frac{d}{dt}\int_{\Omega}u_{h}^{2}(x,t)dx\leq0.
\end{align}
\end{thm}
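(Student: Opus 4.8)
The plan is to mirror the energy argument of Theorem \ref{stability_linear_forth}, the new feature being that the nonlinear first--order equation (\ref{linear_fifth_scheme2}) now replaces the algebraic relation (\ref{linear_forth_scheme3}), so the cross terms will not cancel outright but will collapse to a numerical--entropy expression at the cell interfaces. First I would integrate by parts once in the two second--order equations (\ref{linear_fifth_scheme1}) and (\ref{linear_fifth_scheme3}) and sum over $j$, exactly as in (\ref{shuadd1}) and (\ref{shuadd3}), producing the bulk terms $-((w_{h})_{x},p_{x})_{\Omega}$ and $((u_{h})_{x},q_{x})_{\Omega}$ together with interface forms $B_{1}(w_{h},p)$ and $B_{2}(u_{h},q)$ of exactly the same structure as (\ref{B1})--(\ref{B2}) (with $v_{h}$ replaced by $w_{h}$). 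I would then test with $p=u_{h}$ in (\ref{linear_fifth_scheme1}), $s=v_{h}$ in (\ref{linear_fifth_scheme2}), and $q=w_{h}$ in (\ref{linear_fifth_scheme3}).

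Adding the integrated forms of (\ref{linear_fifth_scheme1}) and (\ref{linear_fifth_scheme3}) with these choices, the two bulk gradient terms $-((w_{h})_{x},(u_{h})_{x})_{\Omega}$ and $((u_{h})_{x},(w_{h})_{x})_{\Omega}$ cancel by symmetry, leaving
\begin{align*}
\frac{1}{2}\frac{d}{dt}\|u_{h}\|^{2}+(v_{h},w_{h})_{\Omega}+B_{1}(w_{h},u_{h})+B_{2}(u_{h},w_{h})=0.
\end{align*}
Because the fluxes on $\widehat{w},\widetilde{w_{x}},\widehat{u},\widetilde{u_{x}}$ in (\ref{flux_linear_fifth1}) are precisely the alternating fluxes of (\ref{flux_linear_forth1}) under the relabeling $v_{h}\to w_{h}$, the interface identity (\ref{B1+B2}) applies verbatim and yields $B_{1}(w_{h},u_{h})+B_{2}(u_{h},w_{h})=0$; the choice (\ref{flux_linear_fifth2}) is handled identically. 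It then remains to control $(v_{h},w_{h})_{\Omega}$ through (\ref{linear_fifth_scheme2}).

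Taking $s=v_{h}$ in (\ref{linear_fifth_scheme2}) and introducing an antiderivative $F$ of $f$ (so $F'=f$), I would use $f(v_{h})(v_{h})_{x}=(F(v_{h}))_{x}$ on each cell, so that $(f(v_{h}),(v_{h})_{x})_{j}$ telescopes to the interface values of $F(v_{h})$. Summing over $j$ and regrouping both the single--valued flux $\widehat{f}$ and the boundary values of $F(v_{h})$ interface by interface, using periodicity, converts these into jumps and gives
\begin{align*}
(v_{h},w_{h})_{\Omega}=\sum_{j=1}^{N}\Big(\jump{F(v_{h})}_{\jr}-\widehat{f}_{\jr}\,\jump{v_{h}}_{\jr}\Big).
\end{align*}
Substituting back produces the exact energy balance $\frac{1}{2}\frac{d}{dt}\|u_{h}\|^{2}=-\sum_{j}\big(\jump{F(v_{h})}-\widehat{f}\,\jump{v_{h}}\big)_{\jr}$.

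The crux, and what I expect to be the main obstacle, is the pointwise numerical--entropy inequality $\jump{F(v_{h})}_{\jr}-\widehat{f}_{\jr}\,\jump{v_{h}}_{\jr}\ge 0$ at every interface, which is exactly where the monotonicity of $\widehat{f}$ enters. I would establish it by the mean value theorem: writing $a=v_{h}^{-}$ and $b=v_{h}^{+}$, there is $\xi$ between $a$ and $b$ with $F(b)-F(a)=f(\xi)(b-a)$, so the interface quantity equals $(f(\xi)-\widehat{f}(a,b))(b-a)$. When $b>a$, consistency and monotonicity give $\widehat{f}(a,b)\le\widehat{f}(\xi,b)\le\widehat{f}(\xi,\xi)=f(\xi)$, so the factor $f(\xi)-\widehat{f}(a,b)\ge 0$ multiplies $b-a>0$; when $b<a$ both inequalities reverse and both factors change sign; and $a=b$ is trivial. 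Hence every interface term is nonnegative, the whole sum is nonnegative, and (\ref{stability_equation2}) follows. The only delicate bookkeeping is the sign tracking in the telescoping/regrouping step and checking that the $B_{1}+B_{2}$ cancellation transfers correctly under $v_{h}\to w_{h}$.
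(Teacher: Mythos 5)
Your proposal is correct and follows essentially the same route as the paper: integrate by parts in the first and third equations, cancel the bulk gradient terms and the interface sums $B_{1}(w_{h},u_{h})+B_{2}(u_{h},w_{h})$ exactly as in the fourth-order case, and use the second equation to reduce $(v_{h},w_{h})_{\Omega}$ to the interface quantity $\jump{F(v_{h})}-\widehat{f}\,\jump{v_{h}}$ (the paper's $\Theta_{\jl}$, reached there by testing with $s=-v_{h}$ and adding all three equations at once, which is the same algebra). The only difference is that you spell out the mean-value-theorem argument showing $\Theta_{\jl}\geq 0$ from the monotonicity of $\widehat{f}$, a step the paper asserts without proof; your argument for it is correct.
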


\begin{proof}
Integrate by parts in the scheme
(\ref{linear_fifth_scheme1}), (\ref{linear_fifth_scheme3}) and sum
over $j$, we obtain
\begin{align}
&((u_{h})_{t},p)_{\Omega}-((w_{h})_{x},p_{x})_{\Omega}+B_{1}(w_{h},p)=0,\\
&(w_{h},s)_{\Omega}+(f(v_{h}),s_{x})_{\Omega}+B_{3}(f,s)=0,\\
&(v_{h},q)_{\Omega}+((u_{h})_{x},q_{x})_{\Omega}+B_{2}(u_{h},q)=0,
\end{align}
where $B_1$ and $B_2$ have been defined before in (\ref{B1}) and (\ref{B2}),
and
\begin{align}\label{B3}
B_{3}(f,s)=&\sum\limits_{j=1}^{N}\left(-\widehat{f}s^{-}|_{\jr}+\widehat{f}s^{+}|_{\jl}
\right).
\end{align}
Then we take $p=u_{h}$, $s=-v_{h}$ and $q=w_{h}$ and add the
three equations to obtain
\begin{align}
&\frac{1}{2}\frac{d}{dt}\int_{\Omega}u_{h}^{2}(x,t)dx-(f(v_{h}),(v_{h})_{x})_{\Omega}
+B_{1}(w_{h},u_{h})+B_{3}(f,-v_{h})+B_{2}(u_{h},w_{h})=0 .
\end{align}
By (\ref{B1+B2}), we have $B_{1}(w_{h},u_{h})+B_{2}(u_{h},w_{h})=0$, then
\begin{align}
&\frac{1}{2}\frac{d}{dt}\int_{\Omega}u_{h}^{2}(x,t)dx+\sum\limits_{j=1}^{N}(\widehat{G}_{j+\frac{1}{2}}-\widehat{G}_{j-\frac{1}{2}}+\Theta_{j-\frac{1}{2}})=0,
\end{align}
where
\begin{align}
\widehat{G}_{j+\frac{1}{2}}&=(-F(v_{h}^{-})+\widehat{f}v_{h}^{-})\Big|_{\jr},\quad F(v_h)=\int^{v_h}f(\tau)d\tau,\\
\Theta_{j-\frac{1}{2}}&=(F(v_{h}^{+})-F(v_{h}^{-})+\widehat{f}v_{h}^{-}-\widehat{f}v_{h}^{+})\Big|_\jl,
\end{align}
for both of our flux choices (\ref{flux_linear_fifth1}) and (\ref{flux_linear_fifth2}).
By the monotonicity of the fluxes $\widehat{f}$ and periodic boundary condition we obtain
\begin{align}\label{theta5}
\Theta_{j-\frac{1}{2}}\geq 0.
\end{align}
Then we have (\ref{stability_equation2}).
\end{proof}
\begin{remark}
We can also choose the central flux for nonlinear term $f(v)$
$$
\widehat{f}_\jl=\frac{F(v^+_h)-F(v^-_h)}{v^+_h-v^-_h}\Big|_\jl,
$$
then our scheme will be conservative, that means
$\Theta_{j-\frac{1}{2}}=0$ in (\ref{theta5})
and
$$\frac{d}{dt}\int_{\Omega}u_{h}^{2}(x,t)dx=0.$$
\end{remark}

\subsection{Error estimates}

In this subsection we consider the linear case, $f(v)=v$. Then we have the following optimal error estimate:

\begin{thm}\label{errorestimat_thm2}
Let $u$ be the exact solution of equation (\ref{linear_fifth}) with
$f(v)=v$, and $w=u_{xxx}$, $v=u_{xx}$, which are sufficiently smooth with
bounded derivatives. Let $u_{h}$, $v_{h}$, $w_{h}$ be the numerical
solutions obtained from the scheme
(\ref{linear_fifth_scheme1})-(\ref{linear_fifth_scheme3}) with the choice of fluxes
(\ref{flux_linear_fifth1}) or (\ref{flux_linear_fifth2}) and $\widehat{f}(v)=v^-$. If we
use the $V_{h}$ space with piecewise polynomials $\mathcal{P}^{k},~k\geq 1$,
then we have the following error estimate:
\begin{align}\label{errorestimate_equation2}
\|u(t)-u_{h}(t)\|+\|v(t)-v_{h}(t)\|+\|w(t)-w_{h}(t)\|\leq Ch^{k+1},
\end{align}
where $C$ is a constant independent of $h$ and dependent on $\|u\|_{k+4}$, $\|u_{t}\|_{k+1}$, $k$ and $t$.
\end{thm}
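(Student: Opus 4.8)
The plan is to mirror the structure of the proof of Theorem~\ref{errorestimat_thm1} for the fourth order problem, but now I must control three error quantities $e_u = u - u_h$, $e_v = v - v_h$, and $e_w = w - w_h$ rather than two. First I would write down the cell error equations by subtracting the numerical scheme (\ref{linear_fifth_scheme1})--(\ref{linear_fifth_scheme3}) (with $f(v)=v$ and $\widehat{f}(v)=v^-$) from the exact equations, using that the smooth exact solution satisfies the same weak forms with the fluxes $\widehat{w}=w_h^-$, $\widetilde{w_x}=(w_h)_x^-$, $\widehat{u}=u_h^+$, $\widetilde{u_x}=(u_h)_x^+$ (choice (\ref{flux_linear_fifth1})). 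I would then split each error via the projections: because the fluxes for $u$ are one-sided (``$+$'') and for $w,v$ are one-sided (``$-$''), the natural choice is $\eta_u = u - P_{1h}^+ u$, $\eta_w = w - P_{1h}^- w$, and for $v$ the Gauss--Radau projection $\eta_v = v - P_h^- v$ (since the $v$-equation (\ref{linear_fifth_scheme2}) has only one derivative), writing $\xi_u,\xi_w,\xi_v$ for the differences between the numerical solution and the projected exact solution. The projection properties are designed precisely so that the troublesome jump/flux terms in the error equations that involve $\xi$ cancel, leaving only the projection errors $\eta$ on the right-hand side, each of which is $O(h^{k+1})$.

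Next I would choose test functions to expose an energy identity. By analogy with the stability proof (Theorem~\ref{stability_linear_fifth}), the correct pairing is $p=\xi_w$, $s=-\xi_v$, $q=\xi_u$; adding the three equations makes the interface terms $B_1+B_2$ cancel by (\ref{B1+B2}), and with the central-type treatment of the linear flux the remaining boundary contributions are controlled. This yields an identity of the form
\begin{align*}
\frac{1}{2}\frac{d}{dt}\|\xi_u\|^2 = (\text{projection-error terms}),
\end{align*}
but crucially the left side contains \emph{only} the time derivative of $\|\xi_u\|$, and there is no coercive control of $\xi_v$ or $\xi_w$ appearing directly in this single energy relation. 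This is the essential difference from the fourth order case: there, taking $s=-w_h$ produced the positive term $\|\xi_w\|^2$, but for the odd (fifth) order equation the analogous manipulation does not furnish a sign-definite bound on the auxiliary variables.

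The hard part, therefore, is obtaining optimal estimates on $\xi_v$ and $\xi_w$ (hence on $e_v,e_w$) without a dissipation term. This is exactly where the key new ingredient advertised in the introduction enters: the relationship between the derivative and the element interface jumps of the numerical solution and the auxiliary variable. Concretely, from equation (\ref{linear_fifth_scheme3}) one reads $v_h$ as a discrete second derivative of $u_h$ plus jump corrections, and from (\ref{linear_fifth_scheme2}) one reads $w_h$ similarly from $v_h$. I would make this precise as a lemma expressing $\|\xi_v\|$ and $\|\xi_w\|$ in terms of $\|\xi_u\|$ and inter-element jumps $\jump{\xi_u}$, $\jump{\xi_v}$, and then invoke the discrete Sobolev and Poincar\'{e} inequalities (as flagged in the abstract) to bound these quantities by $\|\xi_u\|$ up to $O(h^{k+1})$ terms. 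Substituting back, the right-hand side of the energy identity becomes bounded by $C h^{k+1}(\|\xi_u\| + h^{k+1})$, after which Gronwall's inequality together with the initialization $u_h(0)=P_{1h}^+ u(0)$ gives $\|\xi_u\|(t)\le C h^{k+1}$. Finally I would propagate this bound through the derivative--jump relationship to conclude $\|\xi_v\|,\|\xi_w\| \le C h^{k+1}$, and combine with the projection estimates $\|\eta_u\|,\|\eta_v\|,\|\eta_w\|\le Ch^{k+1}$ via the triangle inequality to obtain (\ref{errorestimate_equation2}). The main obstacle to watch is ensuring the constants in the discrete inverse-type and Poincar\'{e} estimates depend only on the mesh regularity and $k$, so that no spurious powers of $h^{-1}$ spoil the optimal rate.
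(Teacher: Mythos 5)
There is a genuine gap, and it sits exactly where you put the weight of the argument: your proposed key lemma bounds $\|\xi_{v}\|$ and $\|\xi_{w}\|$ in terms of $\|\xi_{u}\|$ (plus jumps), but this relation goes in the wrong direction and is false. From (\ref{linear_fifth_scheme3}), $v_{h}$ is weakly a \emph{second derivative} of $u_{h}$, and from (\ref{linear_fifth_scheme2}), $w_{h}$ is weakly a derivative of $v_{h}$; bounding a discrete derivative by the function itself costs inverse-inequality factors ($h^{-2}$, resp.\ $h^{-1}$), so the best a relation of your type can give is $\|\xi_{v}\|\leq Ch^{-2}\|\xi_{u}\|$, which destroys optimality. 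The discrete Sobolev/Poincar\'{e} inequalities you invoke act in the opposite (integration) direction: they control a function by its derivative plus jumps, never a derivative by the function. The paper's Lemmas \ref{fifth_lemmav}--\ref{fifth_lemmawx} use them in that integration direction and, crucially, with a different source of control: the \emph{evolution} equation (\ref{linear_fifth_scheme1}) shows that $(w_{h})_{xx}$ and the jumps $\jump{w_{h}}$, $\jump{(w_{h})_{x}}$ are bounded by $\|(u_{h})_{t}\|$, whence $\|w_{h}\|\leq C\|(u_{h})_{t}\|$, and then (\ref{linear_fifth_scheme2}) gives $\|v_{h}\|\leq C\|w_{h}\|$. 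Applied to the error equations this yields $\|\xi_{w}\|\leq C\|(\xi_{u})_{t}\|+Ch^{k+1}$ and $\|\xi_{v}\|\leq C\|\xi_{w}\|+Ch^{k+1}$: the auxiliary errors are controlled by the \emph{time} derivative of $\xi_{u}$, not by $\xi_{u}$.

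This reversal changes everything downstream, in ways your plan does not anticipate. Since the right-hand side of the energy identity (obtained with the pairing $p=\xi_{u}$, $s=-\xi_{v}$, $q=\xi_{w}$; note that your pairing $p=\xi_{w}$, $q=\xi_{u}$ would produce cross terms like $((\xi_{u})_{t},\xi_{w})$ rather than $\frac{d}{dt}\|\xi_{u}\|^{2}$) is now bounded by $Ch^{k+1}\left(\|\xi_{u}\|+\|(\xi_{u})_{t}\|\right)+\cdots$, a single Gronwall argument in $\|\xi_{u}\|$ alone cannot close. The paper therefore (i) differentiates the three error equations in time and tests with $(\xi_{u})_{t}$, $-(\xi_{v})_{t}$, $(\xi_{w})_{t}$, so that Gronwall is applied to the combined energy $\|\xi_{u}\|^{2}+\|(\xi_{u})_{t}\|^{2}$, with the term $\Lambda$ involving $(\eta_{w})_{t},(\eta_{v})_{t}$ handled by an integration by parts in time; and (ii) initializes not by $u_{h}(0)=P_{1h}^{+}u(0)$, as you propose, but by choosing $u_{h}(x,0)$ so that $w_{h}(x,0)=P_{1h}^{+}w(x,0)$, which is precisely what makes $\|u_{t}(\cdot,0)-(u_{h})_{t}(\cdot,0)\|\leq Ch^{k+1}$ and hence the combined energy small at $t=0$. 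Both ingredients are absent from your proposal, and without them---and with the direction of your key lemma reversed---the argument does not go through.
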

To prove Theorem \ref{errorestimat_thm2} we need some lemmas,
addressing the relationship between the derivative and the element
interface jump of the numerical solution and the auxiliary variable
numerical solution of the derivative.
This plays an important role in the error estimates analysis.
Firstly, we have Lemma \ref{fifth_lemmav}, which was proved
in \cite{Wangimex2015} for the LDG method and extended to the
multi-dimensional case in \cite{Wangimex2016}.

\begin{lemma}\cite{Wangimex2015} \label{fifth_lemmav}
Suppose $(w_{h}, v_{h})\in V_{h}\times V_{h}$ is the solution of
the scheme (\ref{linear_fifth_scheme2}) with $f(v)=v$, then there
exists a positive constant $C$ which is independent of $h$,
such that $\forall j\in Z_{N}$
\begin{align}
\|(v_{h})_{x}\|_{I_{j}}+h^{-\frac{1}{2}}|\jump{v_{h}}|_{\jl}\leq C\|w_{h}\|_{I_{j}}.\label{fifth_vxjump}
\end{align}
\end{lemma}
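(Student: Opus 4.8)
The plan is to convert the scheme relation into a purely local algebraic identity on each cell $I_j$ and then to extract the two quantities $\|(v_h)_x\|_{I_j}$ and $h^{-\frac12}|\jump{v_h}_\jl|$ by testing against cleverly chosen polynomials. Starting from (\ref{linear_fifth_scheme2}) with $f(v)=v$ and the flux $\widehat{f}=v_h^-$, I would integrate by parts the volume term $(v_h,s_x)_j$; the interface contributions at $x_\jr$ cancel against the flux terms, leaving the local identity
\begin{align}
(w_h,s)_j=((v_h)_x,s)_j+\jump{v_h}_\jl\, s^+|_\jl \qquad \text{for all } s\in\mathcal{P}^k(I_j).\nonumber
\end{align}
This identity is the starting point: it exhibits $w_h$ as simultaneously encoding the interior derivative $(v_h)_x$ and the boundary jump $\jump{v_h}_\jl$.

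The main obstacle is that testing with $s=(v_h)_x$ alone produces the cross term $\jump{v_h}_\jl\,(v_h)_x^+|_\jl$, which couples the jump to the trace of the derivative and cannot be absorbed without already controlling the jump. To decouple them, I would first isolate the jump. Let $s_0\in\mathcal{P}^k(I_j)$ be the degree-$k$ Legendre polynomial on $I_j$, normalized so that $s_0(x_\jl)=1$; it is $L^2(I_j)$-orthogonal to $\mathcal{P}^{k-1}(I_j)$. Since $(v_h)_x\in\mathcal{P}^{k-1}(I_j)$, the volume term $((v_h)_x,s_0)_j$ vanishes and the identity collapses to $\jump{v_h}_\jl=(w_h,s_0)_j$. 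A scaling computation gives $\|s_0\|_{I_j}\le C h_j^{1/2}$, so Cauchy--Schwarz yields $|\jump{v_h}_\jl|\le C h_j^{1/2}\|w_h\|_{I_j}$, that is, $h_j^{-\frac12}|\jump{v_h}_\jl|\le C\|w_h\|_{I_j}$.

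With the jump controlled, I would return to the choice $s=(v_h)_x$, giving $\|(v_h)_x\|_{I_j}^2=(w_h,(v_h)_x)_j-\jump{v_h}_\jl\,(v_h)_x^+|_\jl$. The first term is bounded by $\|w_h\|_{I_j}\|(v_h)_x\|_{I_j}$; for the second I would invoke the inverse trace inequality $|(v_h)_x^+|_\jl|\le C h_j^{-\frac12}\|(v_h)_x\|_{I_j}$ together with the jump bound just obtained, so that $|\jump{v_h}_\jl\,(v_h)_x^+|_\jl|\le C\|w_h\|_{I_j}\|(v_h)_x\|_{I_j}$. Dividing by $\|(v_h)_x\|_{I_j}$ gives $\|(v_h)_x\|_{I_j}\le C\|w_h\|_{I_j}$. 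Adding the two estimates and using $h\ge h_j$ (so $h^{-\frac12}\le h_j^{-\frac12}$) to replace $h_j$ by $h$ in the jump term yields (\ref{fifth_vxjump}).

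Alternatively, the whole argument can be packaged as an equivalence-of-norms statement on the reference element: after scaling, the local identity defines a linear map $((v_h)_x,\jump{v_h}_\jl)\mapsto w_h$ from $\mathcal{P}^{k-1}\times\mathbb{R}$ into $\mathcal{P}^k$, and this map is injective (the injectivity being exactly the observation that a degree-$k$ polynomial orthogonal to $\mathcal{P}^{k-1}$ and vanishing at an endpoint must be zero), hence an isomorphism between spaces of equal finite dimension. The estimate then follows by equivalence of norms on the fixed reference cell, with a constant depending only on $k$, and scaling back. In either route the essential difficulty is the same, namely neutralizing the derivative--jump cross term, and the Legendre test function $s_0$ is precisely the device that does so.
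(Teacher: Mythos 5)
Your proof is correct, and both steps check out: the local identity obtained from (\ref{linear_fifth_scheme2}) after integration by parts is exactly $(w_h,s)_j=((v_h)_x,s)_j+\jump{v_h}_{\jl}\,s^+|_{\jl}$, your Legendre test function isolates the jump because $(v_h)_x\in\mathcal{P}^{k-1}(I_j)$ is orthogonal to it, and the inverse trace inequality then absorbs the cross term; the final replacement of $h_j$ by $h$ goes in the right direction. For comparison: the paper itself does not prove this lemma --- it is quoted from \cite{Wangimex2015} --- so the only proof of this type written out in the paper is that of the companion Lemma \ref{fifth_lemma_wxxjump}. Your argument belongs to the same family but is organized in the opposite order: you bound the jump first (via orthogonality of $L_k$ to $\mathcal{P}^{k-1}$) and then the derivative, paying for the cross term $\jump{v_h}_{\jl}(v_h)_x^+|_{\jl}$ with an inverse inequality, whereas the paper's proof of Lemma \ref{fifth_lemma_wxxjump} first bounds the highest interior derivative by adding a correction $AL_k+BL_{k-1}$ to the test function so that it and its first derivative vanish at the relevant endpoint (killing the boundary terms outright while preserving the volume term by orthogonality), and only afterwards extracts the jumps by testing with $1$ and $\xi$. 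The two orderings are interchangeable here; yours is slightly leaner for this first-order relation, where only one boundary trace appears, while the paper's corrected-test-function device is the natural choice when several traces ($p$ and $p_x$) must be annihilated simultaneously. Your closing norm-equivalence packaging is also valid: the map $((v_h)_x,\jump{v_h}_{\jl})\mapsto w_h$ is injective between spaces of equal dimension $k+1$, and the scaling back from the reference cell indeed reproduces the $h^{-\frac{1}{2}}$ weight.
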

\smallskip

Next, we establish similar results for $w_{h}$ in the equation (\ref{linear_fifth_scheme1}) as in \cite{Wangimex2015}.
\begin{lemma}\label{fifth_lemma_wxxjump}
Suppose $(u_{h}, w_{h})\in V_{h}\times V_{h}$ is the solution of
the scheme (\ref{linear_fifth_scheme1}), then there exists a positive
constant $C$ which is independent of  $h$, such
that $\forall j\in Z_{N}$
\begin{align}
\|(w_{h})_{xx}\|_{I_{j}}+h^{-\frac{1}{2}}|\jump{(w_{h})_{x}}|_{\jr}
+h^{-\frac{3}{2}}|\jump{w_{h}}|_{\jr}\leq C\|(u_{h})_{t}\|_{I_{j}}.\label{fifth_wxxjump}
\end{align}
\end{lemma}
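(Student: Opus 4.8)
The target is an inverse-type (finite element) inequality bounding the second derivative of $w_h$ and the (scaled) jumps of $w_h$ and $(w_h)_x$ by $\|(u_h)_t\|$, using only the first scheme equation (\ref{linear_fifth_scheme1}). The guiding principle is the same as in Lemma \ref{fifth_lemmav}: equation (\ref{linear_fifth_scheme1}) relates $(u_h)_t$ to $w_h$ through two integrations by parts, so by choosing the test function $p$ cleverly we can expose $(w_h)_{xx}$ and the interface jumps of $w_h$ and $(w_h)_x$ on the left, with $(u_h)_t$ on the right. The key structural fact is that on each element $(w_h)_{xx}\in\mathcal{P}^{k-2}(I_j)$, so $(w_h)_{xx}$ is itself an admissible object to test against, and the jumps live at the $N$ element interfaces.

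The plan is as follows. First I would undo the integration by parts in (\ref{linear_fifth_scheme1}) on a single element $I_j$: integrating $(w_h,p_{xx})_j$ by parts twice and collecting the flux terms (recall $\widehat w=w_h^-$, $\widetilde{w_x}=(w_h)_x^-$ for flux (\ref{flux_linear_fifth1})), equation (\ref{linear_fifth_scheme1}) can be rewritten so that the interior volume term reads $((u_h)_t + (w_h)_{xx}, p)_j$ plus boundary contributions that are precisely the jumps $\jump{w_h}$ and $\jump{(w_h)_x}$ paired against $p$ and $p_x$ at the interfaces. I would then make three separate test-function choices, each isolating one of the three quantities on the left of (\ref{fifth_wxxjump}):
\begin{align*}
&\text{(i) } p = (w_h)_{xx}\in\mathcal{P}^{k-2}(I_j) \text{ to control the volume term } \|(w_h)_{xx}\|_{I_j},\\
&\text{(ii) } p \text{ a local bump peaking at the interface to extract } \jump{(w_h)_x}_\jr,\\
&\text{(iii) } p \text{ a local bump with nonzero } p_x \text{ at the interface to extract } \jump{w_h}_\jr.
\end{align*}
For steps (ii) and (iii) I would use the standard scaling: a polynomial $p\in\mathcal{P}^k(I_j)$ normalized so that $p$ (resp. $p_x$) equals $1$ at $x_\jr$ satisfies $\|p\|_{I_j}\le C h^{1/2}$ (resp. $\le C h^{3/2}$) by inverse inequalities on the reference element. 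Pairing such $p$ against equation (\ref{linear_fifth_scheme1}) and applying Cauchy--Schwarz to the $(u_h)_t$ and $(w_h)_{xx}$ terms yields $|\jump{(w_h)_x}|_\jr\le C h^{1/2}(\|(u_h)_t\|_{I_j}+\|(w_h)_{xx}\|_{I_j})$ and $|\jump{w_h}|_\jr\le C h^{3/2}(\cdots)$, which are exactly the scalings appearing after multiplication by $h^{-1/2}$ and $h^{-3/2}$.

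Finally I would combine the three estimates. Step (i) gives $\|(w_h)_{xx}\|_{I_j}^2\le C\|(u_h)_t\|_{I_j}\|(w_h)_{xx}\|_{I_j} + (\text{jump terms})$, and after absorbing the controlled jump terms (using (ii)--(iii)) into the left side via Young's inequality, one arrives at $\|(w_h)_{xx}\|_{I_j}\le C\|(u_h)_t\|_{I_j}$; the jump bounds then follow. The main obstacle I anticipate is the bookkeeping in step (i): when $p=(w_h)_{xx}$ the boundary flux terms do \emph{not} vanish, and they couple $\|(w_h)_{xx}\|_{I_j}$ to the interface jumps, so I must estimate those cross terms with the correct $h$-powers and use Young's inequality with a small parameter so that the jump contributions (each already bounded by $C h^{1/2}$ times the quantities we are estimating) are genuinely absorbed rather than creating a circular dependence. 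Getting the flux signs right for the chosen flux (\ref{flux_linear_fifth1}) and verifying that the same argument goes through for (\ref{flux_linear_fifth2}) is the delicate part; the analogous single-equation argument in \cite{Wangimex2015} for Lemma \ref{fifth_lemmav} serves as the template, now adapted to a second-order (rather than first-order) operator, which is why both a first-derivative jump and a zeroth-order jump (at scales $h^{-1/2}$ and $h^{-3/2}$) appear.
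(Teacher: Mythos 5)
Your overall framework (undo the integration by parts, test against $(w_h)_{xx}$, and test against local polynomials scaled like $h^{1/2}$ and $h^{3/2}$ to extract the two jumps) matches the paper's, but the \emph{order} in which you run the three estimates creates a gap that your proposed fix does not close. After integration by parts the scheme reads $((u_h)_t,p)_j+((w_h)_{xx},p)_j-\jump{w_h}_{\jr}(p_x)^-_{\jr}+\jump{(w_h)_x}_{\jr}\,p^-_{\jr}=0$. In your step (i), with the raw choice $p=(w_h)_{xx}$, the inverse/trace inequalities give $|p(x_{\jr}^-)|\leq Ch^{-1/2}\|(w_h)_{xx}\|_{I_j}$ and $|p_x(x_{\jr}^-)|\leq Ch^{-3/2}\|(w_h)_{xx}\|_{I_j}$, while your (ii)--(iii) only supply $|\jump{(w_h)_x}_{\jr}|\leq Ch^{1/2}(\|(u_h)_t\|_{I_j}+\|(w_h)_{xx}\|_{I_j})$ and $|\jump{w_h}_{\jr}|\leq Ch^{3/2}(\|(u_h)_t\|_{I_j}+\|(w_h)_{xx}\|_{I_j})$. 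Multiplying, each boundary term contributes $C\|(u_h)_t\|_{I_j}\|(w_h)_{xx}\|_{I_j}+C\|(w_h)_{xx}\|_{I_j}^2$, where $C$ is a product of inverse-inequality constants and the constants from (ii)--(iii), hence a \emph{fixed} constant with no reason to be below $1$. The term $C\|(w_h)_{xx}\|_{I_j}^2$ is already a pure square: there is no product left for a Young inequality with a small parameter to split, so it cannot be absorbed into the left-hand side unless $C<1$, which nothing guarantees. (A secondary issue: a generic bump for (ii) has $p_x(x_{\jr}^-)\neq 0$, so $\jump{w_h}$ enters the bound for $\jump{(w_h)_x}$ and vice versa; this part is fixable by normalizing one trace and annihilating the other, but the absorption failure above is not.)

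The paper avoids this entirely by reversing the order and modifying the first test function: it takes $p=(w_h)_{xx}+AL_k(\xi)+BL_{k-1}(\xi)$ with $A,B$ chosen so that $p(x_{\jr}^-)=p_x(x_{\jr}^-)=0$, so the boundary terms vanish \emph{identically} and no absorption is ever needed, while the Legendre corrections are orthogonal to $(w_h)_{xx}\in\mathcal{P}^{k-2}(I_j)$, so the volume term is still exactly $\|(w_h)_{xx}\|_{I_j}^2$; together with $\|L_k(\xi)\|_{I_j}\leq Ch^{1/2}$ and an inverse inequality this yields $\|(w_h)_{xx}\|_{I_j}\leq C\|(u_h)_t\|_{I_j}$ outright. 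Only then are the jumps estimated, with $p=1$ (whose derivative vanishes, so $\jump{w_h}$ does not enter) and then $p=\xi$, each step using the bounds already established. If you insist on your order, you must build the same orthogonality into your bumps: choose $p\in\mathrm{span}\{L_{k-1}(\xi),L_k(\xi)\}$ with the prescribed endpoint data (solvable for all $k\geq 1$), so that $((w_h)_{xx},p)_j=0$ and (ii)--(iii) bound the jumps by $\|(u_h)_t\|_{I_j}$ alone; only then do the boundary terms in step (i) reduce to genuine products $C\|(u_h)_t\|_{I_j}\|(w_h)_{xx}\|_{I_j}$ that Young's inequality can handle. As written, the absorption step fails, and this is the essential missing idea.
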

\begin{proof}
Without loss of generality, we choose the flux (\ref{flux_linear_fifth2})
\begin{align*}
\widehat{w}=w_{h}^{+},~ \widetilde{w_{x}}=(w_{h})_{x}^{+},~\widehat{f}=v^{-},~ \widehat{u}=u_{h}^{-},~ \widetilde{u_{x}}=(u_{h})_{x}^{-}.
\end{align*}
Recalling the equation (\ref{linear_fifth_scheme1}), after integration
by parts we have
\begin{align}
((u_{h})_{t},p)_{j}+((w_{h})_{xx},p)_{j}-{\jump{w_{h}}}_{\jr}(p_{x})^{-}_{\jr}+{\jump{(w_{h})_{x}}}_{\jr}p^{-}_{\jr}=0.\label{linear_fifth_scheme1-byparts}
\end{align}
Let $L_{k}$ be the standard Legendre polynomial of degree $k$ in $[-1,1]$, we have $L_{k}(1)=1$ and $L_{k}$ is orthogonal to any polynomials with degree at most $k-1$. First we take
$$p(x)|_{I_{j}}=(w_{h})_{xx}(x)+AL_{k}(\xi)+BL_{k-1}(\xi),$$
in (\ref{linear_fifth_scheme1}), with $\displaystyle \xi=\frac{2(x-x_{j})}{h_{j}}$
$$A=-\frac{h_{j}(w_{h})_{xxx}(x_{\jr}^{-})}{2k}+\frac{L_{k-1}^{'}(1)(w_{h})_{xx}(x_{\jr}^{-})}{k},$$ and $$B=\frac{h_{j}(w_{h})_{xxx}(x_{\jr}^{-})}{2k}-\frac{L_{k-1}^{'}(1)(w_{h})_{xx}(x_{\jr}^{-})}{k}-(w_{h})_{xx}(x_{\jr}^{-}),$$
$p(x)\in V_{h}$ and is well defined since $k\geq1$ in our function space. Clearly, there hold $p(x_{\jr}^{-})=0$, $p_{x}(x_{\jr}^{-})=0$, and $((w_{h})_{xx},p)_{j}=((w_{h})_{xx},(w_{h})_{xx})_{j}$. By (\ref{linear_fifth_scheme1-byparts}) we have
$$((u_{h})_{t},p)_{j}+((w_{h})_{xx},(w_{h})_{xx})_{j}=0.$$
Thus
\begin{align*}
\|(w_{h})_{xx}\|_{j}^{2}&\leq\|(u_{h})_{t}\|_{j}\left(\|(w_{h})_{xx}\|_{j}+|A|\|L_{k}(\xi)\|_{j}+|B|\|L_{k-1}(\xi)\|_{j}\right)\\
&\leq C\|(u_{h})_{t}\|_{j}\|(w_{h})_{xx}\|_{j},
\end{align*}
where the first inequality is obtained by using the Cauchy-Schwartz inequality and the second is derived by using the inverse inequality and the fact
$\|L_{k}(\xi)\|_{j}\leq Ch^{\frac{1}{2}} $.
Therefore,
\begin{align}
\|(w_{h})_{xx}\|_{j}\leq C\|(u_{h})_{t}\|_{j}.\label{fifth_wxx}
\end{align}
Next we take $p=1$ in (\ref{linear_fifth_scheme1-byparts}) to obtain
$$((u_{h})_{t},1)_{j}+((w_{h})_{xx},1)_{j}+{\jump{(w_{h})_{x}}}_{\jr}=0,$$
then, by (\ref{fifth_wxx}) and the Cauchy-Schwartz inequality we get
\begin{align}
|{\jump{(w_{h})_{x}}}_{\jr}|\leq h^{\frac{1}{2}}\left(\|(u_{h})_{t}\|_{j}+\|(w_{h})_{xx}\|_{j}\right)\leq C h^{\frac{1}{2}}\|(u_{h})_{t}\|_{j}.\label{fifth_jumpwx}
\end{align}
Our next choice of the test function is $p=\xi$
in (\ref{linear_fifth_scheme1-byparts}), which gives
$$((u_{h})_{t},\xi)_{j}+((w_{h})_{xx},\xi)_{j}-\frac{2}{h_{j}}
{\jump{w_{h}}}_{\jr}+{\jump{(w_{h})_{x}}}_{\jr}=0.
$$
By (\ref{fifth_wxx}), (\ref{fifth_jumpwx}) and the Cauchy-Schwartz inequality we get
\begin{align}
|{\jump{w_{h}}}_{\jr}|&\leq Ch^{\frac{3}{2}}\left(\|(u_{h})_{t}\|_{j}+\|(w_{h})_{xx}\|_{j}\right)\leq Ch^{\frac{3}{2}}\|(u_{h})_{t}\|_{j}.\label{fifth_jumpw}
\end{align}
Finally, we get the desired result (\ref{fifth_wxxjump}).
\end{proof}

Based on the relationship constructed in the Lemma \ref{fifth_lemmav}
and Lemma \ref{fifth_lemma_wxxjump}, we can easily use the discrete
Poincar\'{e} inequalities \cite{Susannediscrete, Susannesiam} to
estimate $w_{h}$ and $v_{h}$.

\begin{lemma}\label{fifth_lemmawx}
Let $(u_{h}, v_{h},w_{h})\in V_{h}$ be the solutions of the
scheme (\ref{linear_fifth_scheme1})-(\ref{linear_fifth_scheme3}), then
there exists a positive constant $C$ which are independent of $h$, such that
\begin{align}
\|(w_{h})_{x}\|&\leq C\|(u_{h})_{t}\|,\label{fifth_wx}\\
\|w_{h}\|&\leq C\|(u_{h})_{t}\|,\label{fifth_w}\\
\|v_{h}\|&\leq C\|w_{h}\|.\label{fifth_v}
\end{align}
\end{lemma}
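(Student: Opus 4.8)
The plan is to establish each of the three estimates by applying a discrete Poincar\'{e} inequality to the appropriate piecewise polynomial and then substituting the cellwise bounds already furnished by Lemma \ref{fifth_lemmav} and Lemma \ref{fifth_lemma_wxxjump}. Recall that for a piecewise $H^{1}$ function $\phi\in V_{h}$ on the periodic domain $\Omega$, the discrete Poincar\'{e} inequality \cite{Susannediscrete, Susannesiam} controls the $L^{2}$ norm by the broken seminorm together with a scaled sum of interface jumps, up to the mean of $\phi$:
\[
\|\phi\|\leq C\Big(\|\phi_{x}\|+\big(\textstyle\sum_{j}h^{-1}|\jump{\phi}|_{\jr}^{2}\big)^{1/2}\Big)+C\Big|\int_{\Omega}\phi\,dx\Big|.
\]
The whole proof then reduces to squaring and summing the per-cell inequalities from the two preceding lemmas, plus checking that the mean term is either zero or negligible.

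\textbf{Step 1 (estimate \eqref{fifth_wx}).} I would apply the inequality above to $\phi=(w_{h})_{x}$, whose broken derivative is $(w_{h})_{xx}$ and whose interface jumps are $\jump{(w_{h})_{x}}$. Squaring the cellwise bound of Lemma \ref{fifth_lemma_wxxjump} and summing over $j$ gives at once $\|(w_{h})_{xx}\|\leq C\|(u_{h})_{t}\|$ and $\sum_{j}h^{-1}|\jump{(w_{h})_{x}}|_{\jr}^{2}\leq C\|(u_{h})_{t}\|^{2}$. For the mean, integrating $(w_h)_x$ on each cell and regrouping by interfaces yields $\int_{\Omega}(w_{h})_{x}\,dx=-\sum_{j}\jump{w_{h}}_{\jr}$; using the sharp bound $|\jump{w_{h}}|_{\jr}\leq Ch^{3/2}\|(u_{h})_{t}\|_{I_{j}}$ from Lemma \ref{fifth_lemma_wxxjump}, Cauchy--Schwarz and $N\sim h^{-1}$ give $|\int_{\Omega}(w_{h})_{x}\,dx|\leq Ch\|(u_{h})_{t}\|$. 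Feeding these three pieces into the Poincar\'{e} inequality produces \eqref{fifth_wx}.

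\textbf{Steps 2 and 3 (estimates \eqref{fifth_w} and \eqref{fifth_v}).} For \eqref{fifth_w} I apply the inequality to $\phi=w_{h}$: its seminorm $\|(w_{h})_{x}\|$ is already controlled by Step 1, while $\sum_{j}h^{-1}|\jump{w_{h}}|_{\jr}^{2}\leq Ch^{2}\|(u_{h})_{t}\|^{2}$ follows from the same $h^{3/2}$-scaling. The mean now vanishes exactly: testing \eqref{linear_fifth_scheme2} (with $f(v)=v$) against the constant $s=1$ gives $\int_{I_{j}}w_{h}\,dx=\widehat{f}|_{\jr}-\widehat{f}|_{\jl}$, which telescopes to zero under periodicity since $\widehat{f}$ is single-valued. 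For \eqref{fifth_v}, which uses only Lemma \ref{fifth_lemmav}, I apply the inequality to $\phi=v_{h}$; testing \eqref{linear_fifth_scheme3} against $q=1$ shows $\int_{I_{j}}v_{h}\,dx=\widetilde{u_{x}}|_{\jr}-\widetilde{u_{x}}|_{\jl}$, so $v_{h}$ also has zero mean, and the squared, summed form of Lemma \ref{fifth_lemmav} bounds both $\|(v_{h})_{x}\|$ and $(\sum_{j}h^{-1}|\jump{v_{h}}|_{\jl}^{2})^{1/2}$ by $C\|w_{h}\|$.

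\textbf{Main obstacle.} The bulk of the work is routine bookkeeping, so the single delicate point is justifying the use of the discrete Poincar\'{e} inequality, i.e. handling the mean. For $w_{h}$ and $v_{h}$ the mean vanishes identically via a constant test function and periodic telescoping, but $(w_{h})_{x}$ has a nonzero mean, and the argument only closes because Lemma \ref{fifth_lemma_wxxjump} supplies the $h^{3/2}$-scaling of $\jump{w_{h}}$; a weaker jump estimate would leave the mean term uncontrolled at the order needed for \eqref{fifth_wx}. This is precisely where the sharp interface-jump bounds constructed in the previous two lemmas are indispensable.
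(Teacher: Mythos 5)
Your proposal is correct and follows exactly the route the paper intends: the paper gives no detailed proof of this lemma, stating only that it follows ``easily'' from the discrete Poincar\'{e} inequalities of Brenner combined with Lemma \ref{fifth_lemmav} and Lemma \ref{fifth_lemma_wxxjump}, which is precisely your argument. Your write-up supplies the details the paper omits---in particular the treatment of the mean-value terms (telescoping via constant test functions for $w_{h}$ and $v_{h}$, and the $h^{3/2}$ jump bound for the nonzero mean of $(w_{h})_{x}$)---and these details check out.
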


With all these preparations, we can start the proof
of Theorem \ref{errorestimat_thm2}.

\begin{proof} (\begin{small}\textbf{The proof of Theorem \ref{errorestimat_thm2}}\end{small})

Without loss of generality, we choose the flux (\ref{flux_linear_fifth2}).
Let
$$e_{u}=u-u_{h}, \quad e_{v}=v-v_{h}, \quad  e_{w}=w-w_{h}$$
be the errors
between the numerical and exact solutions.
Since $u$, $v$ and $w$ clearly satisfy (\ref{linear_fifth_scheme1})-(\ref{linear_fifth_scheme3}) we can obtain the cell error equations: for all $p,~s,~q\in V_{h}$
\begin{align}
&((e_{u})_{t},p)_{j}+(e_{w},p_{xx})_{j}+(e_{w})_{x}^{+}p^{-}|_{\jr}-(e_{w})_{x}^{+}p^{+}|_{\jl}
-e_{w}^{+}p_{x}^{-}|_{\jr}+e_{w}^{+}p_{x}^{+}|_{\jl}=0,\label{linear_fifth_errorequation1}\\
&(e_{w},s)_{j}+(e_{v},s_{x})_{j}
-e_{v}^{-}s^{-}|_{\jr}+e_{v}^{-}s^{+}|_{\jl}=0,\label{linear_fifth_errorequation2}\\
&(e_{v},q)_{j}-(e_{u},q_{xx})_{j}-(e_{u})_{x}^{-}q^{-}|_{\jr}+(e_{u})_{x}^{-}q^{+}|_{\jl}
+e_{u}^{-}q_{x}^{-}|_{\jr}-e_{u}^{-}q_{x}^{+}|_{\jl}=0.\label{linear_fifth_errorequation3}
\end{align}
Since $k\geq1$ we choose the projections $P_{1h}^{\pm}$, and $P_{h}^{-}$,
which are defined in (\ref{guass_radau_projection})-(\ref{projection1.2}).
Denote
\begin{align*}
&\eta_{u}=u-P_{1h}^{-}u,~~\xi_{u}=u_{h}-P_{1h}^{-}u,\\
&\eta_{w}=w-P_{1h}^{+}w,~~\xi_{w}=w_{h}-P_{1h}^{+}w,\\
&\eta_{v}=v-P_{h}^{-}v,~~\xi_{v}=v_{h}-P_{h}^{-}v.
\end{align*}
Furthermore by the error equations (\ref{linear_fifth_errorequation1})-(\ref{linear_fifth_errorequation3}) and Lemma  \ref{fifth_lemmav}, Lemma \ref{fifth_lemma_wxxjump} and Lemma \ref{fifth_lemmawx} we have
\begin{align}
\|\xi_{w}\|&\leq C\|(e_{u})_{t}\|\leq C\|(\xi_{u})_{t}\|+ Ch^{k+1}, \label{xi_w}\\
\|\xi_{v}\|&\leq C\|e_{w}\|\leq C\|\xi_{w}\|+Ch^{k+1}.\label{xi_v}
\end{align}
\begin{itemize}
\item {\bf Error estimates for the initial condition.}
\end{itemize}
We choose the initial condition $u_{h}(x,0)$ such that
\begin{align}
w_{h}(x,0)=P_{1h}^{+}w(x,0), \quad w(x,0)=u_{xxx}(x,0).
\end{align}
Then we have
\begin{align*}
\|w(x,0)-w_{h}(x,0)\|\leq Ch^{k+1}.
\end{align*}
By (\ref{xi_w}) and (\ref{xi_v}) we get
$$\|\xi_v\|\leq \|\xi_w\|+Ch^{k+1}\leq Ch^{k+1},$$
$$\|\xi_u\|\leq \|\xi_v\|+Ch^{k+1}\leq Ch^{k+1},$$
and we have the following estimates:
\begin{align}\label{fifth_initial}
\|u(x,0)-u_{h}(x,0)\|+\|v(x,0)-v_{h}(x,0)\|+\|w(x,0)-w_{h}(x,0)\|\leq Ch^{k+1}.
\end{align}
Next we choose $t=0$ in (\ref{linear_fifth_errorequation1}), due to the choice of $w_{h}(x,0)$ we have
$$(u_{t}(0)-(u_{h})_{t}(0),p)_{j}=0.$$
Now, we choose $p=(u_{h})_{t}(0)-P(u_{t}(0))$, $P$ is the standard $L^2$ projection, and obtain
\begin{align}\label{fifth_ini_ut}
\|u_{t}(x,0)-(u_{h})_{t}(0)\|\leq Ch^{k+1}.
\end{align}

\begin{itemize}
\item {\bf Error estimates for $t>0$.}
\end{itemize}

Then we take $p=\xi_{u}$, $s=-\xi_{v}$ and $q=\xi_{w}$, and
add the three equations
(\ref{linear_fifth_errorequation1})-(\ref{linear_fifth_errorequation3})
and also sum over $j$.  By the stability and the properties of the
projections we can obtain
\begin{align*}
((\xi_{u})_{t},\xi_{u})_{\Omega}+\sum\limits_{j=1}^{N}{\jump{\xi_{v}}}^{2}_{j-\frac{1}{2}}=((\eta_{u})_{t},\xi_{u})_{\Omega}-(\eta_{w},\xi_{v})_{\Omega}
+(\eta_{v},\xi_{w})_{\Omega}.
\end{align*}

Next, we take the time derivative of the
three error equations
(\ref{linear_fifth_errorequation1})-(\ref{linear_fifth_errorequation3}),
and take $p=(\xi_{u})_{t}$, $s=-(\xi_{v})_{t}$ and $q=(\xi_{w})_{t}$
to obtain
\begin{align*}
((\xi_{u})_{tt},(\xi_{u})_{t})_{\Omega}+\sum\limits_{j=1}^{N}{\jump{(\xi_{v})_{t}}}^{2}_{j-\frac{1}{2}}=((\eta_{u})_{tt},(\xi_{u})_{t})_{\Omega}
-((\eta_{w})_{t},(\xi_{v})_{t})_{\Omega}+((\eta_{v})_{t},(\xi_{w})_{t})_{\Omega}.
\end{align*}
Now, combining the energy equations we get
\begin{align}\label{energyequation}
\frac{1}{2}\frac{d}{dt}(\|\xi_{u}\|^{2}+\|(\xi_{u})_{t}\|^{2})
+\sum\limits_{j=1}^{N}({\jump{\xi_{v}}}^{2}_{j-\frac{1}{2}}
+{\jump{(\xi_{v})_{t}}}^{2}_{j-\frac{1}{2}})
=\Upsilon+\Lambda,
\end{align}
where
\begin{align*}
\Upsilon&=((\eta_{u})_{t},\xi_{u})_{\Omega}-(\eta_{w},\xi_{v})_{\Omega}+(\eta_{v},\xi_{w})_{\Omega}+((\eta_{u})_{tt},(\xi_{u})_{t})_{\Omega},\\
\Lambda&=-((\eta_{w})_{t},(\xi_{v})_{t})_{\Omega}+((\eta_{v})_{t},(\xi_{w})_{t})_{\Omega}.
\end{align*}
By (\ref{xi_w}), (\ref{xi_v}) we have the estimate
$$\|\xi_{v}\|\leq C\|\xi_{w}\|+Ch^{k+1}, \quad \|\xi_{w}\|\leq C\|(\xi_{u})_{t}\|+Ch^{k+1},$$
then we can easily get
\begin{align*}
\Upsilon\leq Ch^{k+1}\|\xi_{u}\|+Ch^{k+1}\|(\xi_{u})_{t}\|+Ch^{2k+2}.
\end{align*}
Next, integrating $\Lambda$ with respect to time between $0$ and $t$, we can get the following equation after integration by parts:
\begin{align*}
\int_{0}^{t}\Lambda dt=-((\eta_{w})_{t},\xi_{v})_{\Omega}|_{0}^{t}+\int_{0}^{t}((\eta_{w})_{tt},\xi_{v})_{\Omega}dt+((\eta_{v})_{t},\xi_{w})_{\Omega}|_{0}^{t}-
\int_{0}^{t}((\eta_{v})_{tt},\xi_{w})_{\Omega}dt.
\end{align*}
We can easily get the following estimates using the approximation property of the projections and the estimates for the initial condition
\begin{align*}
\left| \int_{0}^{t}\Lambda dt \right|
&\leq Ch^{2k+2}+\|\xi_{v}\|^{2}+\|\xi_{w}\|^{2}+\int_{0}^{t}(\|\xi_{v}\|^{2}+\|\xi_{w}\|^{2})dt\\
&\leq Ch^{2k+2}+Ch^{k+1}\int_{0}^{t}\|(\xi_{u})_{t}\|dt.
\end{align*}
Now we integrate (\ref{energyequation}) with respect to the time between $0$ to $t$, using the Cauchy-Schwartz inequality and (\ref{fifth_initial}), (\ref{fifth_ini_ut}) to obtain
\begin{align*}
\frac{1}{2}(\|\xi_{u}\|^{2}+\|(\xi_{u})_{t}\|^{2})\leq \frac{1}{4}\int_{0}^{t}\|\xi_{u}\|^{2}+\|(\xi_{u})_{t}\|^{2}dt+Ch^{2k+2}.
\end{align*}
After employing the Gronwall's inequality, we get
\begin{align*}
\max\limits_{t}\|\xi_{u}\|+\max\limits_{t}\|(\xi_{u})_{t}\|\leq Ch^{k+1} ,
\end{align*}
and also
\begin{align*}
\max\limits_{t}\|\xi_{w}\|+\max\limits_{t}\|\xi_{v}\|\leq Ch^{k+1}.
\end{align*}
After using the standard approximation results, we can
get (\ref{errorestimate_equation2}).
\end{proof}

\section{Extension to high order equations}

The DG method introduced in the previous sections as well as
the theoretical analysis for the stability and error estimates
can be extended to more general high order PDEs, and to multidimensional
cases. Firstly, we consider the extension to the general high order equations,
\begin{align}
u_{t}+(-1)^{[\frac{n}{2}]}u_{x}^{n}=0,
\end{align}
with $n$ being any positive integer. Here $u_{x}^{n}$ denotes
the $n$-th derivative of $u$ with respect to $x$, and
$[\frac{n}{2}]$ is the integer part of $\frac{n}{2}$.

{%\color{red}
In the first two subsections, we will give two
specific examples to introduce our scheme to sixth and seventh order
equations. Then we will summarize to the general case.}

\subsection{Extension to sixth order equations}

In this subsection, we will consider the sixth order equation:
\begin{align}\label{linear_sixth}
&u_{t}-u^{(6)}_{x}=0, ~~~~(x,t)\in [0,2\pi]\times (0,T],\\
&u(x,0)=u_{0}(x),~~~~ x\in \mathbb{R},
\end{align}
where $u_{0}(x)$ is a smooth function, as an example of even order
diffusive equations. For simplicity of discussion, we will again
only consider the periodic boundary conditions.
Firstly, we rewrite the sixth order equation into a system of
third order equations
\begin{align}\label{linear_sixth_system}
&u_{t}-w_{xxx}=0,\\
&w-u_{xxx}=0.
\end{align}
Then our DG method is defined as follows: find
$u_{h},~w_{h}\in V_{h}$ such that for all $p,~q\in V_{h}$, we have
\begin{align}
&((u_{h})_{t},p)_{j}+(w_{h},p_{xxx})_{j}-\widetilde{w_{xx}}p^{-}|_{\jr}+\widetilde{w_{xx}}p^{+}|_{\jl}
+\widetilde{w_{x}}p_{x}^{-}|_{\jr}-\widetilde{w_{x}}p_{x}^{+}|_{\jl}\nonumber\\
&-\widetilde{w}p_{xx}^{-}|_{\jr}+\widetilde{w}p_{xx}^{+}|_{\jl}=0,\label{linear_sixth_scheme1}\\
&(w_{h},q)_{j}+(u_{h},q_{xxx})_{j}-\widetilde{u_{xx}}q^{-}|_{\jr}+\widetilde{u_{xx}}q^{+}|_{\jl}
+\widetilde{u_{x}}q_{x}^{-}|_{\jr}-\widetilde{u_{x}}q_{x}^{+}|_{\jl}\nonumber\\
&-\widehat{u}q_{xx}^{-}|_{\jr}+\widehat{u}q_{xx}^{+}|_{\jl}=0.\label{linear_sixth_scheme2}
\end{align}
Here $\widetilde{w}$, $\widetilde{w_{x}}$,
$\widetilde{w_{xx}}$, $\widetilde{u_{x}}$, $\widetilde{u_{x}}$,
and $\widetilde{u_{xx}}$ are the numerical fluxes. The terms
involving these numerical fluxes appear from repeated integration by parts.
We can take either of the following two choices for these six fluxes
\begin{align}\label{flux_linear_sixth1}
\widetilde{w}\!=\!w_{h}^{-},~ \widetilde{w_{x}}\!=\!(w_{h})_{x}^{-},~
\widetilde{w_{xx}}\!=\!(w_{h})_{xx}^{-},~
\widehat{u}\!=\!u_{h}^{+},~ \widehat{u_{x}}\!=\!(u_{h})_{x}^{+},~
\widehat{u_{xx}}\!=\!(u_{h})_{xx}^{+},
\end{align}
or
\begin{align}\label{flux_linear_sixth2}
\widetilde{w}\!=\!w_{h}^{+},~ \widetilde{w_{x}}\!=\!(w_{h})_{x}^{+},~
\widetilde{w_{xx}}\!=\!(w_{h})_{xx}^{+},~
\widehat{u}\!=\!u_{h}^{-},~ \widehat{u_{x}}\!=\!(u_{h})_{x}^{-},~
\widehat{u_{xx}}\!=\!(u_{h})_{xx}^{-}.
\end{align}
It is crucial that we take
the pair $\widehat{u}$ and $\widetilde{w_{xx}}$ from opposite
sides, the pair $\widehat{u_{x}}$ and $\widetilde{w_{x}}$
from opposite
sides, and the pair $\widehat{u_{xx}}$ and $\widetilde{w}$
from opposite sides.

\begin{thm}\label{stability_linear_forth2}
\textbf{(Stability)}
Our scheme (\ref{linear_sixth_scheme1})-(\ref{linear_sixth_scheme2}) with
the choice of fluxes (\ref{flux_linear_sixth1}) or (\ref{flux_linear_sixth2}) is $L^{2}$ stable, i.e.
\begin{align}\label{stability_equation3}
\frac{1}{2}\frac{d}{dt}\int_{\Omega}u_{h}^{2}(x,t)dx+\int_{\Omega}w_{h}^{2}(x,t)dx=0.
\end{align}
\end{thm}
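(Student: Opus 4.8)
The plan is to run the same energy argument that established Theorem~\ref{stability_linear_forth}. I would test equation (\ref{linear_sixth_scheme1}) with $p=u_h$ and equation (\ref{linear_sixth_scheme2}) with $q=w_h$, sum over all cells $j$, and add the two resulting identities. The temporal term $((u_h)_t,u_h)_\Omega$ becomes $\tfrac12\frac{d}{dt}\int_\Omega u_h^2\,dx$ and the mass term $(w_h,w_h)_\Omega$ becomes $\int_\Omega w_h^2\,dx$; these are exactly the two quantities in (\ref{stability_equation3}). Thus the theorem reduces to showing that everything else---the two volume terms $(w_h,(u_h)_{xxx})_\Omega+(u_h,(w_h)_{xxx})_\Omega$ together with the six interface flux terms---sums to zero.

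The observation that makes this work is that the combined volume integrand is an exact spatial derivative on each cell,
\begin{align*}
w_h(u_h)_{xxx}+u_h(w_h)_{xxx}=\frac{d}{dx}\Big(w_h(u_h)_{xx}+u_h(w_h)_{xx}-(w_h)_x(u_h)_x\Big).
\end{align*}
Integrating over each $I_j$ and summing converts the volume terms into interface terms, with net contribution $-\jump{w_h(u_h)_{xx}+u_h(w_h)_{xx}-(w_h)_x(u_h)_x}_\jr$ at each $x_\jr$. In parallel, collecting the flux terms of (\ref{linear_sixth_scheme1}) tested against $u_h$ gives $\widetilde{w_{xx}}\jump{u_h}-\widetilde{w_x}\jump{(u_h)_x}+\widetilde{w}\jump{(u_h)_{xx}}$ at $x_\jr$, and those of (\ref{linear_sixth_scheme2}) tested against $w_h$ give $\widetilde{u_{xx}}\jump{w_h}-\widetilde{u_x}\jump{(w_h)_x}+\widehat{u}\jump{(w_h)_{xx}}$. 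Expanding the volume contribution with the product-jump identity $\jump{ab}=\mean{a}\jump{b}+\jump{a}\mean{b}$ recasts every interface term as a product of two jumps multiplied by a flux value or an average.

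The decisive point is that the alternating choice (\ref{flux_linear_sixth1}) (and symmetrically (\ref{flux_linear_sixth2})) is exactly what turns each surviving coefficient into a half-jump: for instance $\widehat{u}-\mean{u_h}=+\tfrac12\jump{u_h}$ and $\widetilde{w_{xx}}-\mean{(w_h)_{xx}}=-\tfrac12\jump{(w_h)_{xx}}$, so that the $\jump{u_h}$- and $\jump{(w_h)_{xx}}$-coefficients produce the equal-and-opposite products $\mp\tfrac12\jump{u_h}\jump{(w_h)_{xx}}$. The same mechanism, applied to the remaining fluxes, leaves three cancelling pairs, $\mp\tfrac12\jump{u_h}\jump{(w_h)_{xx}}$, $\pm\tfrac12\jump{(u_h)_x}\jump{(w_h)_x}$ and $\mp\tfrac12\jump{w_h}\jump{(u_h)_{xx}}$, which vanish precisely because the flux rule pairs $\widehat{u}$ with $\widetilde{w_{xx}}$, $\widehat{u_x}$ with $\widetilde{w_x}$, and $\widehat{u_{xx}}$ with $\widetilde{w}$ from opposite sides. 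Hence every interface contributes zero, and by periodicity the total vanishes, which gives (\ref{stability_equation3}).

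I expect the only genuine difficulty to be organizational rather than conceptual: one must keep the $x_\jr^-$-contribution of cell $I_j$ aligned with the $x_\jr^+$-contribution of cell $I_{j+1}$, track all six flux terms at once, and confirm that the specific one-sided selections in (\ref{flux_linear_sixth1})--(\ref{flux_linear_sixth2}) are exactly those that send each $\mean{\cdot}$ to the half-jump needed for cancellation. Introducing boundary operators in the spirit of $B_1,B_2$ from (\ref{B1})--(\ref{B2}) and proving an analogue of the identity (\ref{B1+B2}) would package this bookkeeping and make the cancellation transparent.
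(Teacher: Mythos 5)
Your proposal is correct; I verified both your exact-derivative identity $w_h(u_h)_{xxx}+u_h(w_h)_{xxx}=\big(w_h(u_h)_{xx}+u_h(w_h)_{xx}-(w_h)_x(u_h)_x\big)_x$ and the resulting three-pair cancellation at the interfaces, and the argument goes through for both flux choices. The paper organizes the same energy argument differently: it integrates by parts three times in (\ref{linear_sixth_scheme1}) only, leaving (\ref{linear_sixth_scheme2}) in ultraweak form, so that with $p=u_h$, $q=w_h$ the volume terms $-((w_h)_{xxx},u_h)_{\Omega}$ and $(u_h,(w_h)_{xxx})_{\Omega}$ cancel pointwise, and stability reduces to the boundary-operator identity $B_4(w_h,u_h)+B_5(u_h,w_h)=0$ (the analogue of (\ref{B1+B2})), which the paper asserts "can easily be checked." Your route instead keeps both equations ultraweak, converts the combined volume terms into interface jumps via the exact derivative, and then uses $\jump{ab}=\mean{a}\jump{b}+\jump{a}\mean{b}$ together with the observation that each alternating flux equals the average plus or minus a half-jump, producing the cancelling pairs $\mp\tfrac12\jump{u_h}\jump{(w_h)_{xx}}$, $\pm\tfrac12\jump{(u_h)_x}\jump{(w_h)_x}$, $\mp\tfrac12\jump{w_h}\jump{(u_h)_{xx}}$. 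What the paper's asymmetric integration by parts buys is brevity and consistency with the fourth-order proof; what your decomposition buys is transparency, since it isolates the structural reason the alternating pairing of $\widehat{u}$ with $\widetilde{w_{xx}}$, $\widehat{u_x}$ with $\widetilde{w_x}$, and $\widehat{u_{xx}}$ with $\widetilde{w}$ forces cancellation, and it actually supplies the verification that the paper leaves implicit. Your closing suggestion to package the bookkeeping in operators analogous to $B_1,B_2$ is in fact exactly what the paper does with its $B_4$ and $B_5$ in (\ref{B4})--(\ref{B5}).
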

\begin{proof}
Integrating by parts in the scheme
(\ref{linear_sixth_scheme1})-(\ref{linear_sixth_scheme2}) and summing
over $j$, we have
\begin{align}
&((u_{h})_{t},p)_{\Omega}-((w_{h})_{xxx},p)_{\Omega}+B_{4}(w_{h},p)=0,
\label{shuadd10}\\
&(w_{h},q)_{\Omega}+(u_{h},q_{xxx})_{\Omega}+B_{5}(u_{h},q)=0,
\label{shuadd11}
\end{align}
where
\begin{align}
B_{4}(w_{h},p)=&\sum\limits_{j=1}^{N}\left({
w_{h}^{-}p_{xx}^{-}|_{\jr}-w_{h}^{+}p_{xx}^{+}|_{\jl}
-(w_{h})_{x}^{-}p_{x}^{-}|_{\jr}+(w_{h})_{x}^{+}p_{x}^{+}|_{\jl}
}\right.\nonumber\\
&\left.{
+(w_{h})_{xx}^{-}p^{-}|_{\jr}-(w_{h})_{xx}^{+}p^{+}|_{\jl}
-\widetilde{w_{xx}}p^{-}|_{\jr}+\widetilde{w_{xx}}p^{+}|_{\jl}
}\right.\nonumber\\
&\left.{
+\widetilde{w_{x}}p_{x}^{-}|_{\jr}-\widetilde{w_{x}}p_{x}^{+}|_{\jl}
-\widetilde{w}p_{xx}^{-}|_{\jr}+\widetilde{w}
p_{xx}^{+}|_{\jl}}\right), \label{B4} \\
B_{5}(u_{h},q)=&\sum\limits_{j=1}^{N}\left({
-\widehat{u_{xx}}q^{-}|_{\jr}+\widehat{u_{xx}}q^{+}|_{\jl}
+\widehat{u_{x}}q_{x}^{-}|_{\jr}-\widehat{u_{x}}q_{x}^{+}|_{\jl}
}\right.\nonumber\\
&\left.{-\widehat{u}q_{xx}^{-}|_{\jr}+\widehat{u}
q_{xx}^{+}|_{\jl}}\right).
\label{B5}
\end{align}
Then we take $p=u_{h}$ and $q=w_{h}$ and add the two equations
(\ref{shuadd10})-(\ref{shuadd11}) to obtain
\begin{align}
\frac{1}{2}\frac{d}{dt}\int_{\Omega}u_{h}^{2}(x,t)dx+\int_{\Omega}w_{h}^{2}(x,t)dx+B_{4}(w_{h},u_{h})+B_{5}(u_{h},w_{h})=0.
\end{align}
We can easily check that
\begin{align*}
&~~~B_{4}(w_{h},u_{h})+B_{5}(u_{h},w_{h})=0,
\end{align*}
for both of our flux choices (\ref{flux_linear_sixth1})
and (\ref{flux_linear_sixth2}). Then we have (\ref{stability_equation3}).
\end{proof}

\begin{thm}\label{errorestimat_thm3}
\textbf{(Error estimates)}
Let $u$ be the exact solution of the equation (\ref{linear_sixth})
and $w=u_{xxx}$, which are sufficiently smooth with bounded derivatives.
Let $u_{h}$ and $w_{h}$ be solutions of the scheme
(\ref{linear_sixth_scheme1})-(\ref{linear_sixth_scheme2}) with
either (\ref{flux_linear_sixth1}) or (\ref{flux_linear_sixth2}) as
the numerical fluxes, and let $V_{h}$ be the space of piecewise
polynomials $\mathcal{P}^{k},~k\geq 2$, then we have the following
error estimate
\begin{align}\label{errorestimate_equation3}
\|u(t)-u_{h}(t)\|+\int_{0}^{t}\|w(t)-w_{h}(t)\|dt\leq Ch^{k+1},
\end{align}
where $C$ is a constant independent of $h$ and dependent on $\|u\|_{k+4}$, and $t$.
\end{thm}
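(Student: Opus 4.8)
The plan is to mirror the energy argument in the proof of Theorem~\ref{errorestimat_thm1}. The essential reason this simpler argument applies, rather than the time-differentiated energy machinery needed for the fifth order problem, is that (\ref{linear_sixth}) is of even order: its stability identity (\ref{stability_equation3}) is genuinely dissipative, giving direct control of $\|w_h\|$. I would first set $e_u=u-u_h$, $e_w=w-w_h$ and, using that the smooth exact solution also satisfies the scheme, write down the cell error equations corresponding to (\ref{linear_sixth_scheme1})--(\ref{linear_sixth_scheme2}) in their ultra-weak form, with all spatial derivatives on the test functions. Since third derivatives now act on the test functions, the appropriate projection is $P_{2h}^{\pm}$ from (\ref{projection2.1})--(\ref{projection2.2}), which matches the value and the first two derivatives at one endpoint; this is exactly why $k\geq 2$ is assumed, so that $P_{2h}^{\pm}$ is well defined. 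For the flux (\ref{flux_linear_sixth2}) all $w$-fluxes are taken from the ``$+$'' side and all $u$-fluxes from the ``$-$'' side, so I would pair the projection side with the flux side and set
\begin{align*}
\eta_u=u-P_{2h}^{-}u,\quad \xi_u=u_h-P_{2h}^{-}u,\quad \eta_w=w-P_{2h}^{+}w,\quad \xi_w=w_h-P_{2h}^{+}w.
\end{align*}

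Next I would take $p=\xi_u$ in the first error equation and $q=\xi_w$ in the second, then add and sum over the cells. The volume contributions $(\eta_w,(\xi_u)_{xxx})$ and $(\eta_u,(\xi_w)_{xxx})$ vanish because $(\xi_u)_{xxx},(\xi_w)_{xxx}\in\mathcal{P}^{k-3}$ and $\eta_u,\eta_w$ are $L^2$-orthogonal to $\mathcal{P}^{k-3}$ by (\ref{projection2.1}). Every interface term carrying $\eta_u$ or $\eta_w$ involves only the one-sided traces of $\eta$, $\eta_x$, $\eta_{xx}$ at the endpoints, and all of these vanish by the interpolation conditions (\ref{projection2.2}), precisely because the projection side was matched to the flux side. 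The surviving $\xi$-terms then reproduce exactly the left-hand side of the stability computation of Theorem~\ref{stability_linear_forth2} applied to $(\xi_u,\xi_w)$; by the cancellation $B_4(\xi_w,\xi_u)+B_5(\xi_u,\xi_w)=0$ established there, together with the cancellation of the global volume terms after integration by parts, all the interface contributions drop out and leave the energy identity
\begin{align*}
\frac{1}{2}\frac{d}{dt}\|\xi_u\|^2+\|\xi_w\|^2=((\eta_u)_t,\xi_u)_{\Omega}+(\eta_w,\xi_w)_{\Omega}.
\end{align*}

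To conclude, I would bound the right-hand side by the Cauchy--Schwarz inequality and the approximation estimate for $P_{2h}^{\pm}$, obtaining $Ch^{k+1}(\|\xi_u\|+\|\xi_w\|)$; here the constant depends on $\|u\|_{k+4}$, since $\eta_w$ is the projection error of $w=u_{xxx}$ and thus costs three extra derivatives. Absorbing $Ch^{k+1}\|\xi_w\|$ into $\|\xi_w\|^2$, choosing the initial datum $u_h(\cdot,0)=P_{2h}^{-}u(\cdot,0)$ so that $\xi_u(\cdot,0)=0$, and applying Gronwall's inequality yields $\|\xi_u\|(t)\leq Ch^{k+1}$; the bound $\int_0^t\|\xi_w\|\,dt\leq Ch^{k+1}$ then follows by integrating the energy identity in time and using Cauchy--Schwarz. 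A triangle inequality with $\eta_u,\eta_w$ finally gives (\ref{errorestimate_equation3}).

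The argument is largely bookkeeping, and the place to be careful is the verification that each $\eta$-interface contribution is annihilated: because of the third derivatives there are now three pairs of interface terms per face (multiplying $p,p_x,p_{xx}$ and $q,q_x,q_{xx}$) instead of the two pairs in the fourth order case, and one must check term by term that the correct side of $P_{2h}^{\pm}$ was chosen so that the matching conditions (\ref{projection2.2}) apply. The conceptual step I would verify first, and the one that makes the whole estimate possible, is the cancellation $B_4(\xi_w,\xi_u)+B_5(\xi_u,\xi_w)=0$; this is the even-order dissipativity that is unavailable in the odd-order setting and is what lets us bypass the jump--derivative lemmas used for the fifth order equation.
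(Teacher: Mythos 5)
Your proposal is correct and is essentially the paper's own proof: the paper disposes of this theorem in one line by saying to follow the argument of Theorem \ref{errorestimat_thm1} with the projection $P_{2h}^{\pm}$ of (\ref{projection2.1})--(\ref{projection2.2}) in place of $P_{1h}^{\pm}$, which is exactly what you carry out (flux-matched projections killing the $\eta$-traces and the volume terms, the stability cancellation $B_4+B_5=0$ yielding the energy identity, then Cauchy--Schwarz, Gronwall, and the triangle inequality). Your filled-in details, including the choice $p=\xi_u$, $q=\xi_w$ mirroring the stability proof and the dependence on $\|u\|_{k+4}$ through $w=u_{xxx}$, are all consistent with the paper's intended argument.
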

\begin{proof}
The proof is similar to that of Theorem \ref{errorestimat_thm1}.
By using the projection $P_{2h}^{\pm}$ defined in
(\ref{projection2.1})-(\ref{projection2.2}) for $k\geq2$ and then
following the line of proof for Theorem \ref{errorestimat_thm1},
we can easily get the result (\ref{errorestimate_equation3}).
\end{proof}

\subsection{Extension to seventh order equations}

In this subsection, we will give the formulation of the scheme as
well as its theoretical results for the seventh order wave equation
\begin{align}\label{linear_seventh}
&u_{t}-u^{(7)}_{x}=0, ~~~~(x,t)\in [0,2\pi]\times (0,T],\\
&u(x,0)=u_{0}(x),~~~~ x\in \mathbb{R},
\end{align}
where $u_{0}(x)$ is a smooth function, as an example of general
odd order wave equations. As mentioned before, we only consider the
periodic boundary conditions.
Similar to the sixth order equation, firstly, we rewrite
(\ref{linear_seventh}) into a system:
\begin{align}\label{linear_seventh_system}
&u_{t}-w_{xxx}=0,\\
&w-v_{x}=0,\\
&v-u_{xxx}=0.
\end{align}
Then our DG method defined as follows: find $u_{h},~v_{h}, ~w_{h}\in V_{h}$ such that for all $p,~s,~q\in V_{h}$, we have
\begin{align}
&((u_{h})_{t},p)_{j}+(w_{h},p_{xxx})_{j}-\widetilde{w_{xx}}p^{-}|_{\jr}+\widetilde{w_{xx}}p^{+}|_{\jl}
+\widetilde{w_{x}}p_{x}^{-}|_{\jr}-\widetilde{w_{x}}p_{x}^{+}|_{\jl}\nonumber\\
&~~~-\widetilde{w}p_{xx}^{-}|_{\jr}+\widetilde{w}p_{xx}^{+}|_{\jl}=0,\label{linear_seventh_scheme1}\\
&(w_{h},s)_{j}+(v_{h},s_{x})_{j}
-\widehat{v}s^{-}|_{\jr}+\widehat{v}s^{+}|_{\jl}=0,\label{linear_seventh_scheme2}\\
&(v_{h},q)_{j}+(u_{h},q_{xxx})_{j}-\widehat{u_{xx}}q^{-}|_{\jr}+\widehat{u_{xx}}q^{+}|_{\jl}
+\widehat{u_{x}}q_{x}^{-}|_{\jr}-\widehat{u_{x}}q_{x}^{+}|_{\jl}\nonumber\\
&~~~-\widehat{u}q_{xx}^{-}|_{\jr}+\widehat{u}q_{xx}^{+}|_{\jl}=0.\label{linear_seventh_scheme3}
\end{align}
Here $\widetilde{w}$, $\widetilde{w_{x}}$,
$\widetilde{w_{xx}}$, $\widehat{v}$, $\widehat{u}$,
$\widehat{u_{x}}$, $\widehat{u_{xx}}$ are numerical fluxes.
For example, we can take either of the following two choices for these fluxes
\begin{align}\label{flux_linear_seventh1}
\widetilde{w}\!=\!w_{h}^{-},~ \widetilde{w_{x}}\!=\!(w_{h})_{x}^{-},~
\widetilde{w_{xx}}\!=\!(w_{h})_{xx}^{-},~
\widehat{v}\!=\!v_{h}^{-},~
\widehat{u}\!=\!u_{h}^{+},~ \widehat{u_{x}}\!=\!(u_{h})_{x}^{+},~
\widehat{u_{xx}}\!=\!(u_{h})_{xx}^{+},
\end{align}
or
\begin{align}\label{flux_linear_seventh2}
\widetilde{w}\!=\!w_{h}^{+},~ \widetilde{w_{x}}\!=\!(w_{h})_{x}^{+},~
\widetilde{w_{xx}}\!=\!(w_{h})_{xx}^{+},~
\widehat{v}\!=\!v_{h}^{-},~
\widehat{u}\!=\!u_{h}^{-},~ \widehat{u_{x}}\!=\!(u_{h})_{x}^{-},~
\widehat{u_{xx}}\!=\!(u_{h})_{xx}^{-}.
\end{align}
It is crucial that we take $\widehat{v}=v_{h}^{-}$ by upwinding,
the pair $\widehat{u}$ and $\widetilde{w_{xx}}$ from opposite
sides, the pair $\widehat{u_{x}}$ and $\widetilde{w_{x}}$
from opposite
sides, and the pair $\widehat{u_{xx}}$ and $\widetilde{w}$
from opposite sides.

\begin{thm}\label{stability_linear_forth3}
\textbf{(Stability)}
Our scheme (\ref{linear_seventh_scheme1})-(\ref{linear_seventh_scheme3})
with the choice of fluxes (\ref{flux_linear_seventh1})
or (\ref{flux_linear_seventh2}) is stable, i.e.
\begin{align}\label{stability_equationseven}
\frac{1}{2}\frac{d}{dt}\int_{\Omega}u_{h}^{2}(x,t)dx\leq0.
\end{align}
\end{thm}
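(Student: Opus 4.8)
The plan is to combine the two stability mechanisms already developed: the exact boundary cancellation of the third-order terms from the even-order (sixth order) argument in Theorem~\ref{stability_linear_forth2}, together with the upwind dissipation of the first-order intermediate equation from the odd-order (fifth order) argument in Theorem~\ref{stability_linear_fifth}. Concretely, I would integrate by parts three times in the volume term of (\ref{linear_seventh_scheme1}) to move all spatial derivatives off $p$ and onto $w_h$, and leave (\ref{linear_seventh_scheme2}) and (\ref{linear_seventh_scheme3}) as they stand; summing over $j$ then gives
\begin{align*}
&((u_h)_t,p)_\Omega-((w_h)_{xxx},p)_\Omega+B_4(w_h,p)=0,\\
&(w_h,s)_\Omega+(v_h,s_x)_\Omega+B_3(v,s)=0,\\
&(v_h,q)_\Omega+(u_h,q_{xxx})_\Omega+B_5(u_h,q)=0,
\end{align*}
where $B_4$ and $B_5$ are exactly the boundary functionals (\ref{B4}) and (\ref{B5}) from the sixth-order scheme, and $B_3$ is the functional (\ref{B3}) with $\widehat{v}$ in place of $\widehat{f}$. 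The test functions to use are $p=u_h$, $s=-v_h$, $q=w_h$; adding the three identities is the single step that organizes all of the remaining work.

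Next I would track the cancellations. The two third-order volume terms combine as $-((w_h)_{xxx},u_h)_\Omega+(u_h,(w_h)_{xxx})_\Omega=0$, and the cross terms satisfy $-(w_h,v_h)_\Omega+(v_h,w_h)_\Omega=0$, so the interior contributions collapse to $\tfrac12\tfrac{d}{dt}\|u_h\|^2-(v_h,(v_h)_x)_\Omega$. For the third-order boundary terms, the decisive identity is $B_4(w_h,u_h)+B_5(u_h,w_h)=0$; since the $w$- and $u$-fluxes in (\ref{flux_linear_seventh1})/(\ref{flux_linear_seventh2}) obey the same alternating (opposite-side) pairing as the sixth-order fluxes (\ref{flux_linear_sixth1})/(\ref{flux_linear_sixth2}), this is precisely the cancellation already verified in the proof of Theorem~\ref{stability_linear_forth2}, and can be quoted verbatim.

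Finally I would treat the first-order intermediate equation exactly as in Theorem~\ref{stability_linear_fifth}. Writing $F(v)=\int^{v}\tau\,d\tau=\tfrac12 v^2$ and using $\widehat{v}=v_h^-$, the quantity $-(v_h,(v_h)_x)_\Omega+B_3(v,-v_h)$ telescopes into $\sum_{j}(\widehat{G}_{\jr}-\widehat{G}_{\jl}+\Theta_{\jl})$ with the same $\widehat{G}$ and $\Theta$ as in that proof; the $\widehat{G}$ differences cancel by periodicity, and a direct computation gives $\Theta_{\jl}=\tfrac12\jump{v_h}_{\jl}^2\ge 0$, matching (\ref{theta5}). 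Putting the pieces together yields $\tfrac12\tfrac{d}{dt}\|u_h\|^2+\sum_j\tfrac12\jump{v_h}_{\jl}^2=0$, which is even slightly stronger than (\ref{stability_equationseven}) since it exhibits the explicit dissipation. The only genuinely delicate step is the boundary identity $B_4+B_5=0$, because it requires matching all the $p$-, $p_x$- and $p_{xx}$-trace products against the corresponding $q$-trace products across both alternating flux choices; everything else is a transcription of the fifth- and sixth-order computations, so I expect no new obstacle beyond careful bookkeeping.
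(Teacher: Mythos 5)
Your proposal is correct and follows essentially the same route as the paper's proof: integrate by parts and sum over $j$ to obtain the three global identities with $B_{3}$, $B_{4}$, $B_{5}$, take $p=u_{h}$, $s=-v_{h}$, $q=w_{h}$, invoke the cancellation $B_{4}(w_{h},u_{h})+B_{5}(u_{h},w_{h})=0$ already verified for the sixth-order fluxes, and let the upwind choice $\widehat{v}=v_{h}^{-}$ produce the dissipative term $\tfrac{1}{2}\sum_{j}\jump{v_{h}}^{2}_{\jl}$. The paper states this last identity directly rather than routing it through the monotone-flux quantities $F$, $\widehat{G}$, $\Theta$ of the fifth-order proof, but that is only a presentational difference.
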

\begin{proof}
Integrating by parts in the scheme
(\ref{linear_seventh_scheme1})-(\ref{linear_seventh_scheme3})
and summing over $j$, we have
\begin{align}
&((u_{h})_{t},p)_{\Omega}-((w_{h})_{xxx},p)_{\Omega}+B_{4}(w_{h},p)=0,
\label{shuadd15} \\
&(w_{h},s)_{\Omega}+(v_{h},s_{x})_{\Omega}+B_{3}(v_{h},s)=0,
\label{shuadd16} \\
&(v_{h},q)_{\Omega}+(u_{h},q_{xxx})_{\Omega}+B_{5}(u_{h},q)=0,
\label{shuadd17}
\end{align}
where $B_3$, $B_4$ and $B_5$ are defined in
(\ref{B3}), (\ref{B4}) and (\ref{B5}), respectively.
Then we take $p=u_{h}$, $s=-v_{h}$ and $q=w_{h}$ in
(\ref{shuadd15}), (\ref{shuadd16}) and (\ref{shuadd17}) respectively,
add the three equations to obtain
\begin{align}
\frac{1}{2}\frac{d}{dt}\int_{\Omega}u_{h}^{2}(x,t)dx
+\frac{1}{2}\sum\limits_{j=1}^{N}({\jump{v_{h}}})^{2}_{\jl}=0,
\end{align}
for both of our flux choices (\ref{flux_linear_seventh1})
and (\ref{flux_linear_seventh2}). Then we
have (\ref{stability_equationseven}).
\end{proof}

\begin{thm}\label{errorestimat_thmseven}
\textbf{(Error estimates)}
Let $u$ be the exact solution of the equation (\ref{linear_seventh}), and $w=u_{xxxx}$, $v=u_{xxx}$, which are sufficiently smooth with bounded derivatives. Let $u_{h}$, $v_{h}$, $w_{h}$ be the numerical solutions of (\ref{linear_seventh_scheme1})-(\ref{linear_seventh_scheme3}). If we use $V_{h}$ as the
space with piecewise polynomials $\mathcal{P}^{k},~k\geq 2$, then we
have the following error estimate:
\begin{align}\label{errorestimate_equationseven}
\|u(t)-u_{h}(t)\|+\|v(t)-v_{h}(t)\|+\|w(t)-w_{h}(t)\|\leq Ch^{k+1},
\end{align}
where $C$ is a constant independent of $h$ and dependent on $\|u\|_{k+5}$, $\|u_{t}\|_{k+1}$, $k$ and $t$.
\end{thm}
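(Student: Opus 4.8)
The plan is to follow the structure of the proof of Theorem~\ref{errorestimat_thm2}, since the seventh order problem decomposes into two third-order equations (\ref{linear_seventh_scheme1}), (\ref{linear_seventh_scheme3}) coupled by the first order equation (\ref{linear_seventh_scheme2}) in exactly the way the fifth order problem decomposed into two second-order equations coupled by a first order equation. First I would choose the flux (\ref{flux_linear_seventh2}), set $e_u=u-u_h$, $e_v=v-v_h$, $e_w=w-w_h$, and derive the cell error equations by subtracting the scheme from the exact-solution identities. Since $k\geq 2$, the natural projections are $P_{2h}^{\pm}$ for $u_h$ and $w_h$ (matching up to second derivatives at the appropriate interface, as dictated by the alternating fluxes $\widehat{u}=u_h^-$ and $\widetilde{w}=w_h^+$ etc.) and $P_h^{-}$ for $v_h$ (matching the upwind flux $\widehat v=v_h^-$). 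I would then write $\eta_u,\xi_u,\eta_w,\xi_w,\eta_v,\xi_v$ for the projection errors and numerical-minus-projection differences as in the fifth order proof.

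The key preparatory step is to establish the analogues of Lemmas~\ref{fifth_lemmav}, \ref{fifth_lemma_wxxjump}, and \ref{fifth_lemmawx} adapted to third-order operators. From (\ref{linear_seventh_scheme2}) with $f(v)=v$ and the upwind flux I expect the bound $\|(v_h)_x\|_{I_j}+h^{-1/2}|\jump{v_h}|_{\jl}\leq C\|w_h\|_{I_j}$ exactly as in Lemma~\ref{fifth_lemmav}. The new ingredient is the third-order version of Lemma~\ref{fifth_lemma_wxxjump}: using the integrated-by-parts form of (\ref{linear_seventh_scheme1}) and a sequence of test functions built from Legendre polynomials $L_k,L_{k-1},L_{k-2}$ chosen so that $p,p_x,p_{xx}$ vanish at $x_\jr^-$ and $((w_h)_{xxx},p)_j=\|(w_h)_{xxx}\|_j^2$, I would prove
\begin{align*}
\|(w_h)_{xxx}\|_{I_j}+h^{-1/2}|\jump{(w_h)_{xx}}|_{\jr}+h^{-3/2}|\jump{(w_h)_{x}}|_{\jr}+h^{-5/2}|\jump{w_h}|_{\jr}\leq C\|(u_h)_t\|_{I_j},
\end{align*}
by successively taking $p=1,\xi,\xi^2$ (up to normalization) and invoking the inverse and Cauchy--Schwarz inequalities. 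Combining these local estimates with the discrete Poincar\'e inequalities then yields the global bounds $\|(w_h)_{xx}\|,\|(w_h)_x\|,\|w_h\|\leq C\|(u_h)_t\|$ and $\|v_h\|\leq C\|w_h\|$, hence $\|\xi_w\|\leq C\|(\xi_u)_t\|+Ch^{k+1}$ and $\|\xi_v\|\leq C\|\xi_w\|+Ch^{k+1}$.

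With these in hand, the energy argument is essentially identical to the fifth order case: I take $p=\xi_u$, $s=-\xi_v$, $q=\xi_w$ in the error equations and also in their time-differentiated versions with $p=(\xi_u)_t$, $s=-(\xi_v)_t$, $q=(\xi_w)_t$, add and sum over $j$ to produce the energy identity
\begin{align*}
\tfrac12\tfrac{d}{dt}\bigl(\|\xi_u\|^2+\|(\xi_u)_t\|^2\bigr)+\sum_{j=1}^{N}\bigl(\jump{\xi_v}^2_{\jl}+\jump{(\xi_v)_t}^2_{\jl}\bigr)=\Upsilon+\Lambda,
\end{align*}
bound $\Upsilon$ directly and bound $\int_0^t\Lambda\,dt$ after integration by parts in time, control the boundary-in-time terms using the initial error estimates, and close the argument with Gronwall's inequality. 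The initial condition is prepared exactly as before by choosing $u_h(x,0)$ so that $w_h(x,0)=P_{2h}^{+}w(x,0)$, which forces $(u_t(0)-(u_h)_t(0),p)_j=0$ and gives the $O(h^{k+1})$ bound on $\|u_t(x,0)-(u_h)_t(0)\|$. The main obstacle I anticipate is the third-order analogue of Lemma~\ref{fifth_lemma_wxxjump}: constructing the explicit test function with vanishing value and first two derivatives at the interface, and verifying that the resulting coefficients multiplying $L_k,L_{k-1},L_{k-2}$ scale correctly in $h$ so that the three jump terms pick up the stated negative powers $h^{-1/2},h^{-3/2},h^{-5/2}$ while the leading $\|(w_h)_{xxx}\|_j$ bound survives. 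Once that scaling is confirmed, the remainder is a routine transcription of the fifth order proof.
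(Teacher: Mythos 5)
Your proposal is correct and follows exactly the route the paper intends: the paper omits this proof, stating only that it is ``similar to that of Theorem \ref{errorestimat_thm2}'', and your plan is a faithful execution of that analogy, with the right adaptations (the $P_{2h}^{\pm}$ projections matching values and first two derivatives as dictated by the alternating fluxes, $P_h^-$ for the upwinded variable, the third-order analogue of Lemma \ref{fifth_lemma_wxxjump} with the scalings $h^{-1/2}$, $h^{-3/2}$, $h^{-5/2}$, the same initial-condition preparation, and the same time-differentiated energy/Gronwall argument). The $h$-scalings in your key lemma are indeed the correct ones, and the $3\times 3$ Legendre system for the test function is nonsingular (any combination of $L_k$, $L_{k-1}$, $L_{k-2}$ vanishing to second order at $\xi=1$ would equal $(\xi-1)^3 r(\xi)$ with $\deg r\leq k-3$, forcing $r\equiv 0$ by orthogonality), so the anticipated obstacle resolves as you expect.
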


\begin{proof}
The proof is similar to that of Theorem \ref{errorestimat_thm2}
and is thus omitted to save space.
\end{proof}

{%\color{red}
\subsection{Extension to general high order cases}
We have introduced the numerical schemes for sixth and seventh order cases. More generally, we summarize the scheme for any high order case.  The proof of stability and error estimate is similar to the sixth and seventh equations, therefore we just list the results and omit the proof.  Again,
we only consider the periodic boundary conditions.
\subsubsection{General even order case}
Let $n$ be a positive even number, and consider the equation
\begin{align}
u_{t}+(-1)^{\frac{n}{2}}u_{x}^{n}=0.\label{even_equation}
\end{align}
Firstly, we rewrite it into a $\frac{n}{2}$-th order system,
\begin{align}
u_{t}+(-1)^{\frac{n}{2}}w_{x}^{\frac{n}{2}}=0,\\
w-u_{x}^{\frac{n}{2}}=0.
\end{align}
Then our DG method is defined as follows: find
$u_{h},~w_{h}\in V_{h}$ such that for all $p,~q\in V_{h}$, we have
\begin{align}
&((u_{h})_{t},p)_{j}+(w_{h},p_{x}^{\frac{n}{2}})_{j}+\sum_{m=0}^{\frac{n}{2}-1}\left((-1)^{\frac{n}{2}+m}\left(\widetilde{w_x}^{\frac{n}{2}-1-m}(p_x^{m})^{-}|_{\jr}-\widetilde{w_x}^{\frac{n}{2}-1-m}(p_x^{m})^{+}|_{\jl}\right)\right)=0,\label{evenscheme1}\\
&(w_{h},q)_{j}-(-1)^{\frac{n}{2}}(u_{h},q_{x}^{\frac{n}{2}})_{j}+\sum_{m=0}^{\frac{n}{2}-1}\left((-1)^{m+1}\left(\widehat{u_x}^{\frac{n}{2}-1-m}(q_x^{m})^{-}|_{\jr}-\widehat{u_x}^{\frac{n}{2}-1-m}(q_x^{m})^{+}|_{\jl}\right)\right)=0.\label{evenscheme2}
\end{align}
\begin{remark}\label{fluxeven}
We choose alternating fluxes. It is crucial that  we take $\widetilde{w_x}^{\frac{n}{2}-1-m}$ and
$\widehat{u_x}^{m}$ from opposite sides, $m=0,1,\cdots, \frac{n}{2}-1$.
\end{remark}

\begin{thm}\label{stability_linear_even}
\textbf{(Stability)}
Our scheme (\ref{evenscheme1})-(\ref{evenscheme2}) with
the choice of alternating fluxes in Remark \ref{fluxeven} is $L^{2}$ stable, i.e.
\begin{align}\label{stability_equation3b}
\frac{1}{2}\frac{d}{dt}\int_{\Omega}u_{h}^{2}(x,t)dx+\int_{\Omega}w_{h}^{2}(x,t)dx=0.
\end{align}
\end{thm}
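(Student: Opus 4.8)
The plan is to follow exactly the proof structure already established for the fourth order case (Theorem \ref{stability_linear_forth}) and the sixth order case (Theorem \ref{stability_linear_forth2}), since the even order scheme (\ref{evenscheme1})-(\ref{evenscheme2}) is simply their common generalization. First I would integrate by parts $\frac{n}{2}$ times in each of the two scheme equations (\ref{evenscheme1}) and (\ref{evenscheme2}) and sum over all elements $j$, so as to move all spatial derivatives off the test function. This produces two volume terms together with two boundary bilinear forms, say $\widetilde{B}_{1}(w_{h},p)$ and $\widetilde{B}_{2}(u_{h},q)$, which collect all the interface contributions. The key structural fact I would record is that after this integration by parts the first equation reads
\begin{align*}
((u_{h})_{t},p)_{\Omega}+(-1)^{\frac{n}{2}}(w_{h},p_{x}^{\frac{n}{2}})_{\Omega}^{\mathrm{vol}}+\widetilde{B}_{1}(w_{h},p)=0,
\end{align*}
where the surviving volume integral is, up to sign, the mirror image of the one in the second equation; this sign-matching across the two equations is what will ultimately force the boundary terms to cancel.

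Next I would take the test functions $p=u_{h}$ and $q=w_{h}$, exactly as in the lower-order cases, and add the two resulting equations. The volume derivative terms combine: integrating $(w_{h},(u_{h})_{x}^{\frac{n}{2}})_{\Omega}$ against the matching term from the second equation yields the single dissipative term $\int_{\Omega}w_{h}^{2}\,dx$, which is precisely the second term appearing in (\ref{stability_equation3b}), while the time-derivative term produces $\frac{1}{2}\frac{d}{dt}\int_{\Omega}u_{h}^{2}\,dx$. What remains is to show that the accumulated boundary contribution $\widetilde{B}_{1}(w_{h},u_{h})+\widetilde{B}_{2}(u_{h},w_{h})$ vanishes identically, in exact analogy with (\ref{B1+B2}) for the fourth order case and the corresponding cancellation $B_{4}(w_{h},u_{h})+B_{5}(u_{h},w_{h})=0$ for the sixth order case.

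The main obstacle, and the only step requiring genuine bookkeeping, is verifying this boundary cancellation in the general $\frac{n}{2}$-parameter family. For each index $m\in\{0,1,\dots,\frac{n}{2}-1\}$ the interface terms pair a factor $\widetilde{w_x}^{\frac{n}{2}-1-m}$ against $(u_{h})_{x}^{m}$ in $\widetilde{B}_{1}$, and symmetrically $\widehat{u_x}^{\frac{n}{2}-1-m}$ against $(w_{h})_{x}^{m}$ in $\widetilde{B}_{2}$; the alternating-flux rule in Remark \ref{fluxeven}, namely that $\widetilde{w_x}^{\frac{n}{2}-1-m}$ and $\widehat{u_x}^{m}$ are taken from opposite sides, is exactly the condition that makes each such pair telescope to zero after summation over $j$ under the periodic boundary condition. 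I would organize the verification by collecting, at a fixed interface $x_{\jr}$, all terms carrying the product of the $(\frac{n}{2}-1-m)$-th and $m$-th derivatives; the alternating choice guarantees these appear as a single-valued flux multiplying a jump, which the neighboring element cancels.

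Finally, having established $\widetilde{B}_{1}(w_{h},u_{h})+\widetilde{B}_{2}(u_{h},w_{h})=0$ for the alternating fluxes, the energy identity collapses to
\begin{align*}
\frac{1}{2}\frac{d}{dt}\int_{\Omega}u_{h}^{2}(x,t)\,dx+\int_{\Omega}w_{h}^{2}(x,t)\,dx=0,
\end{align*}
which is precisely (\ref{stability_equation3b}). Since the argument is a direct extension of the two worked cases and the only new content is the index bookkeeping in the cancellation, I expect the proof can be presented concisely by referring back to the fourth and sixth order computations and emphasizing the role of the alternating-flux pairing.
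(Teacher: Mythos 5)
Your proposal is correct and follows essentially the same route as the paper: the paper in fact omits the proof of this theorem, stating only that it is "similar to the sixth and seventh equations," and your argument is precisely that generalization (integrate by parts, sum over $j$, take $p=u_{h}$, $q=w_{h}$, cancel the mirrored volume terms, and verify that the interface terms vanish under the alternating-flux pairing, exactly as in the verification of $B_{4}(w_{h},u_{h})+B_{5}(u_{h},w_{h})=0$). One small imprecision worth fixing in the write-up: the sign-matching of the two volume terms is what makes the \emph{derivative} terms cancel, while the term $\int_{\Omega}w_{h}^{2}\,dx$ comes directly from $(w_{h},q)_{\Omega}$ with $q=w_{h}$, and the boundary cancellation is due solely to the alternating fluxes, not to the volume sign structure.
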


\begin{thm}\label{errorestimat_thmeven}
\textbf{(Error estimates)}
Let $u$ be the exact solution of the equation (\ref{even_equation}), and $w=u_{x}^{\frac{n}{2}}$,  which are sufficiently smooth with bounded derivatives. Let $u_{h}$, $w_{h}$ be the numerical solutions of (\ref{evenscheme1})-(\ref{evenscheme2}) with alternating fluxes in Remark \ref{fluxeven}. If we use $V_{h}$ as the
space with piecewise polynomials $\mathcal{P}^{k},~k\geq \frac{n}{2}-1$, then we
have the following error estimate:
\begin{align}\label{errorestimate_equationsevenb}
\|u(t)-u_{h}(t)\|+\int_{0}^{t}\|w(t)-w_{h}(t)\|dt\leq Ch^{k+1},
\end{align}
where $C$ is a constant independent of $h$.
\end{thm}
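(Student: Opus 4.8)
The plan is to follow the template of Theorems \ref{errorestimat_thm1} and \ref{errorestimat_thm3}, since (\ref{even_equation}) is a diffusive even-order equation whose scheme (\ref{evenscheme1})-(\ref{evenscheme2}) is a two-equation system of exactly the same form as the fourth- and sixth-order cases. Without loss of generality I would fix one of the alternating flux choices of Remark \ref{fluxeven}, say the one in which every one-sided trace of $w_h$ and its derivatives is taken from the $-$ side and every trace of $u_h$ and its derivatives from the $+$ side. Writing $e_u=u-u_h$ and $e_w=w-w_h$ and subtracting the exact solution (which satisfies the scheme) gives cell error equations for all $p,q\in V_h$, whose interface terms are the $n/2$ flux pairs produced by the repeated integration by parts.

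The key device is a generalized Gauss-Radau projection $P_{(n/2-1)h}^{\pm}$ into $V_h$, extending $P_{1h}^{\pm}$ and $P_{2h}^{\pm}$ from (\ref{projection1.1})-(\ref{projection2.2}): on each $I_j$ it is $L^2$-orthogonal to $\mathcal{P}^{k-n/2}$ and matches the function together with all its derivatives up to order $n/2-1$ at the one-sided endpoint, i.e. $(P_{(n/2-1)h}^{\pm}u)_x^m(x_{\jlr}^{\pm})=u_x^m(x_{\jlr})$ for $m=0,\dots,n/2-1$. Counting constraints, the $n/2$ interface conditions together with the $k+1-n/2$ orthogonality conditions total $k+1=\dim\mathcal{P}^k$, so the projection is well defined precisely when $k\ge n/2-1$, which is exactly the hypothesis; and it inherits the standard $h^{k+1}$ approximation bound from \cite{ciarlet2002finite}. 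I would then decompose $\eta_u=u-P_{(n/2-1)h}^{+}u$, $\xi_u=u_h-P_{(n/2-1)h}^{+}u$, $\eta_w=w-P_{(n/2-1)h}^{-}w$, $\xi_w=w_h-P_{(n/2-1)h}^{-}w$, with the projection sides chosen to mirror the alternating-flux sides.

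Taking $p=\xi_w$ and $q=\xi_u$ and adding the two error equations, I expect the orthogonality of $P_{(n/2-1)h}^{\pm}$ to annihilate every volume $\eta$-term (each is an $\eta$ paired with $p_x^{n/2}$ or $q_x^{n/2}\in\mathcal{P}^{k-n/2}$), while the endpoint derivative-matching makes each boundary $\eta$-term vanish, since every $\eta_w$- or $\eta_u$-flux is evaluated at precisely the one-sided limit where the projection is exact. The surviving identity should reduce, via the stability structure of Theorem \ref{stability_linear_even}, to
\begin{align*}
\frac{1}{2}\frac{d}{dt}\|\xi_u\|^2+\|\xi_w\|^2=((\eta_u)_t,\xi_u)_\Omega+(\eta_w,\xi_w)_\Omega.
\end{align*}
Bounding the right-hand side by Cauchy-Schwarz and the $h^{k+1}$ approximation estimate, absorbing $\|\xi_w\|$ through Young's inequality, choosing $u_h(0)$ so that $\xi_u(0)=0$, and applying Gronwall's inequality would yield $\|\xi_u\|(t)+\int_0^t\|\xi_w\|\,dt\le Ch^{k+1}$; the triangle inequality with the projection error then gives (\ref{errorestimate_equationsevenb}).

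The main obstacle is the bookkeeping in the cancellation step: after the $n/2$-fold integration by parts, each error equation carries $n/2$ distinct interface flux terms of the form $\widetilde{w_x}^{\,n/2-1-m}(p_x^m)^{\pm}$ and $\widehat{u_x}^{\,n/2-1-m}(q_x^m)^{\pm}$, and I must verify that, under the chosen alternating fluxes, the generalized projection kills exactly the $\eta$-contribution of each of these and that the paired $\xi$-flux terms from the two equations cancel in the sum. This is the analogue of the identities $B_1+B_2=0$ and $B_4+B_5=0$ used earlier, but now carried out for arbitrary $n/2$ rather than for the fixed cases $n=4,6$. Making this cancellation explicit in general is where the argument requires care, yet it is a structural generalization of the verifications already performed, so no genuinely new estimate is needed.
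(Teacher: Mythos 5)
Your proposal is correct and takes essentially the same route as the paper: the paper omits this proof, deferring to the sixth-order case (Theorem \ref{errorestimat_thm3}), which itself says to use the higher-order Gauss--Radau projection and follow the template of Theorem \ref{errorestimat_thm1}, and your generalized projection $P_{(n/2-1)h}^{\pm}$ (orthogonality against $\mathcal{P}^{k-n/2}$ plus endpoint matching of derivatives up to order $n/2-1$), the resulting annihilation of all $\eta$-terms except $((\eta_u)_t,\xi_u)$ and $(\eta_w,\xi_w)$, the stability cancellation of the $\xi$ cross-terms, and the Gronwall argument are exactly that template carried out for general even $n$. One slip, inherited from a typo in the paper's own proof of Theorem \ref{errorestimat_thm1}: to obtain the energy identity you write, the test functions must be $p=\xi_u$ in the first error equation and $q=\xi_w$ in the second (not $p=\xi_w$, $q=\xi_u$), since otherwise the terms $((\xi_u)_t,\xi_u)$ and $(\xi_w,\xi_w)$ never arise.
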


\subsubsection{General odd order case}
Let $n$ be an odd number, and $n\geq 3$. We consider the following equation:
\begin{align}
u_{t}+u_{x}^{n}=0,\label{equation_odd}
\end{align}
Firstly, we rewrite it into a $(\frac{n-1}{2})$-th order system,
\begin{align}
u_{t}+w_{x}^{\frac{n-1}{2}}=0,\label{equation1_odd}\\
w-v_{x}=0,\label{equation1_oddb}\\
v-u_{x}^{\frac{n-1}{2}}=0.\label{equation1_oddc}
\end{align}
Then our DG method is defined as follows: find
$u_{h},~v_h,~w_{h}\in V_{h}$ such that for all $p,~s,~q\in V_{h}$, we have
\begin{align}
&((u_{h})_{t},p)_{j}+(-1)^{\frac{n-1}{2}}(w_{h},p_{x}^{\frac{n-1}{2}})_{j}+\sum_{m=0}^{\frac{n-3}{2}}\left((-1)^{m}\left(\widetilde{w_x}^{\frac{n-3}{2}-m}(p_x^{m})^{-}|_{\jr}-\widetilde{w_x}^{\frac{n-3}{2}-m}(p_x^{m})^{+}|_{\jl}\right)\right)=0.\label{oddscheme1}\\
&(w_{h},s)_{j}+(v_{h},s_{x})_{j}
-\widehat{v}s^{-}|_{\jr}+\widehat{v}s^{+}|_{\jl}=0,\label{oddscheme2}\\
&(v_{h},q)_{j}-(-1)^{\frac{n-1}{2}}(u_{h},q_{x}^{\frac{n-1}{2}})_{j}+\sum_{m=0}^{\frac{n-3}{2}}\left((-1)^{m+1}\left(\widetilde{u_x}^{\frac{n-3}{2}-m}(q_x^{m})^{-}|_{\jr}-\widetilde{u_x}^{\frac{n-3}{2}-m}(q_x^{m})^{+}|_{\jl}\right)\right)=0.\label{oddscheme3}
\end{align}

\begin{remark}\label{fluxodd}
It is crucial that we take  $\widehat{v}$ by upwinding, the pairs $\widetilde{w_x}^{\frac{n-3}{2}-m}$ and
$\widetilde{u_x}^{m}$ from opposite sides, $m=0,1,\cdots, \frac{n-3}{2}$.
\end{remark}

\begin{thm}\label{stability_linear_odd}
\textbf{(Stability)}
Our scheme (\ref{oddscheme1})-(\ref{oddscheme3})
with the choice of fluxes in Remark \ref{fluxodd} is stable, i.e.
\begin{align}\label{stability_equation_odd}
\frac{1}{2}\frac{d}{dt}\int_{\Omega}u_{h}^{2}(x,t)dx\leq0.
\end{align}
\end{thm}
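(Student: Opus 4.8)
The plan is to follow the stability proofs of Theorems \ref{stability_linear_fifth} and \ref{stability_linear_forth3} verbatim in structure, since once the repeated integration by parts is organized by derivative order the algebra is identical for every odd $n$. First I would integrate by parts in (\ref{oddscheme1}) and (\ref{oddscheme3}), sum over $j$, and rewrite the first and third equations as the exact analogues of (\ref{shuadd15}) and (\ref{shuadd17}),
\begin{align*}
&((u_{h})_{t},p)_{\Omega}-((w_{h})_{x}^{\frac{n-1}{2}},p)_{\Omega}+\mathcal{B}_{W}(w_{h},p)=0,\\
&(v_{h},q)_{\Omega}+(u_{h},q_{x}^{\frac{n-1}{2}})_{\Omega}+\mathcal{B}_{U}(u_{h},q)=0,
\end{align*}
where $\mathcal{B}_{W}$ and $\mathcal{B}_{U}$ are the interface bilinear forms collecting all the flux terms together with the boundary contributions produced by moving the $\frac{n-1}{2}$ derivatives; these generalize $B_{4}$ in (\ref{B4}) and $B_{5}$ in (\ref{B5}). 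The middle equation (\ref{oddscheme2}) is kept unchanged and contributes $B_{3}(v_{h},s)$ from (\ref{B3}), exactly as in Theorem \ref{stability_linear_forth3}.

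Then I would take $p=u_{h}$, $s=-v_{h}$, $q=w_{h}$ and add the three equations. Three cancellations drive the proof. First, the volume terms $-((w_{h})_{x}^{\frac{n-1}{2}},u_{h})_{\Omega}$ and $(u_{h},(w_{h})_{x}^{\frac{n-1}{2}})_{\Omega}$ share the same integrand with opposite signs and cancel identically; this is precisely why I push all $\frac{n-1}{2}$ derivatives onto $w_{h}$ in the first equation while keeping them on $q=w_{h}$ in the third. Second, the products $(w_{h},v_{h})_{\Omega}$ from (\ref{oddscheme2}) and $(v_{h},w_{h})_{\Omega}$ from (\ref{oddscheme3}) cancel. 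Third, one must show $\mathcal{B}_{W}(w_{h},u_{h})+\mathcal{B}_{U}(u_{h},w_{h})=0$, the generalization of the $B_{4}+B_{5}=0$ identity used for the sixth and seventh order cases.

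Once these hold, only the middle-equation contribution survives, and it is handled exactly as in the proof of Theorem \ref{stability_linear_forth3}: using $(v_{h},(v_{h})_{x})_{\Omega}=\frac{1}{2}\sum_{j}\big((v_{h}^{-})^{2}|_{\jr}-(v_{h}^{+})^{2}|_{\jl}\big)$, the upwind choice of $\widehat{v}$ from Remark \ref{fluxodd}, and periodicity to shift the $|_{\jr}$ sums to $|_{\jl}$ sums, the remaining terms collapse into a single nonnegative jump, yielding
\begin{align*}
\frac{1}{2}\frac{d}{dt}\int_{\Omega}u_{h}^{2}(x,t)\,dx+\frac{1}{2}\sum_{j=1}^{N}({\jump{v_{h}}})^{2}_{\jl}=0,
\end{align*}
which is the desired inequality (\ref{stability_equation_odd}); the nonnegativity of the jump term is the analogue of (\ref{theta5}).

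The hard part will be the bookkeeping in the third cancellation. Unlike the fixed, small-order forms $B_{4},B_{5}$, here $\mathcal{B}_{W}$ and $\mathcal{B}_{U}$ each contain $\frac{n-1}{2}$ flux terms, indexed by $m=0,1,\dots,\frac{n-3}{2}$, so I expect the main effort to lie in setting up an induction on the derivative order (equivalently, a telescoping argument over $m$) that matches every interface monomial of the form $(w_{h})_{x}^{i}(u_{h})_{x}^{\frac{n-1}{2}-1-i}$ produced by $\mathcal{B}_{W}$ with its sign-reversed partner produced by $\mathcal{B}_{U}$. The alternating rule of Remark \ref{fluxodd}, which takes $\widetilde{w_{x}}^{\frac{n-3}{2}-m}$ and $\widehat{u_{x}}^{m}$ from opposite sides, is exactly the ingredient that forces this termwise pairing, so the verification amounts to checking that the two sets of $\frac{n-1}{2}$ integration-by-parts boundary terms annihilate each other at each interface after the periodic reindexing.
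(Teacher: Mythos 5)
Your high-level strategy is the intended one: the paper in fact \emph{omits} the proof of Theorem \ref{stability_linear_odd}, saying only that it is ``similar to the sixth and seventh equations,'' and your outline (integrate by parts fully in (\ref{oddscheme1}), keep (\ref{oddscheme3}) in ultraweak form, test with $p=u_h$, $s=-v_h$, $q=w_h$, cancel the volume terms, the $(w_h,v_h)$ terms and the interface forms, and let the middle equation produce the nonnegative jump term) is exactly the pattern of Theorems \ref{stability_linear_fifth} and \ref{stability_linear_forth3}.

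However, your central claim --- that ``the algebra is identical for every odd $n$'' and that the two volume terms cancel identically with this fixed test-function choice --- is false for half of the odd orders, and the error is traceable to the signs you transcribed. Set $r=\frac{n-1}{2}$. The volume term of (\ref{oddscheme1}) is $(-1)^{r}(w_h,p_x^{r})_j$, and after $r$ integrations by parts this equals $+((w_h)_x^{r},p)_j$ plus interface terms \emph{for every} $r$, since the factor $(-1)^{r}$ occurs twice; it is never $-((w_h)_x^{r},p)_j$ as you wrote. Likewise the volume term of (\ref{oddscheme3}) is $-(-1)^{r}(u_h,q_x^{r})_j$, which equals $+(u_h,q_x^{r})_j$ only when $r$ is odd. (Your two displayed equations are those of the seventh-order section, which discretizes $u_t-u^{(7)}_{x}=0$; the general equation (\ref{equation_odd}) is $u_t+u_x^{n}=0$, and the sign of the first equation of the system flips.) Consequently, with $p=u_h$, $q=w_h$ the two volume contributions sum to $\bigl(1-(-1)^{r}\bigr)(u_h,(w_h)_x^{r})_{\Omega}$, which vanishes when $r$ is even ($n\equiv 1 \bmod 4$, e.g.\ $n=5$) but equals $2(u_h,(w_h)_x^{r})_{\Omega}$ when $r$ is odd ($n\equiv 3 \bmod 4$, e.g.\ $n=3,7$). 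A residual volume integral can never be removed by any choice of interface fluxes, so your argument breaks down for those $n$. The repair is the one used in the classical LDG analysis of KdV ($n=3$): when $r$ is odd take $s=+v_h$ and $q=-w_h$ instead; then the two volume terms are literally the same integral with opposite signs and cancel, the $(w_h,v_h)$ terms still cancel, and the middle equation again yields $\frac{1}{2}\sum_{j}\jump{v_h}_{\jl}^{2}$ --- but only if the upwinding of $\widehat{v}$ is also reversed ($\widehat{v}=v_h^{+}$ rather than $v_h^{-}$). In short, both the test-function combination and the upwinding direction in Remark \ref{fluxodd} must be chosen according to the parity of $\frac{n-1}{2}$; the uniform, parity-independent proof you propose does not close.
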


\begin{thm}\label{errorestimat_thm_odd}
\textbf{(Error estimates)}
Let $u$ be the exact solution of the equation (\ref{equation_odd}), and $v=u_{x}^{\frac{n-1}{2}}$,
$w=v_{x}$, , which are sufficiently smooth with bounded derivatives. Let $u_{h}$, $v_{h}$, $w_{h}$ be the numerical solutions of (\ref{oddscheme1})-(\ref{oddscheme3}) with the choice of fluxes in Remark \ref{fluxodd}. If we use $V_{h}$ as the
space with piecewise polynomials $\mathcal{P}^{k},~k\geq \frac{n-3}{2}$, then we
have the following error estimate:
\begin{align}\label{errorestimate_equationsevenc}
\|u(t)-u_{h}(t)\|+\|v(t)-v_{h}(t)\|+\|w(t)-w_{h}(t)\|\leq Ch^{k+1},
\end{align}
where $C$ is a constant independent of $h$.
\end{thm}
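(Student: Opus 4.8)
The plan is to mirror the proof of Theorem \ref{errorestimat_thm2} (the fifth order case) step by step, since the odd-order structure of equations (\ref{oddscheme1})--(\ref{oddscheme3}) is the direct generalization of (\ref{linear_fifth_scheme1})--(\ref{linear_fifth_scheme3}). First I would introduce the errors $e_u = u-u_h$, $e_v=v-v_h$, $e_w=w-w_h$ and, using consistency of the scheme for the exact solution, derive the cell error equations corresponding to (\ref{oddscheme1})--(\ref{oddscheme3}). I would then split each error via carefully chosen projections: a $P_{1h}^{\pm}$-type projection (or, for general $n$, a higher-order analogue matching enough derivatives at the endpoints) for $e_u$ and $e_w$, whose derivative-matching properties kill the troublesome interface terms arising from the $\tfrac{n-1}{2}$-fold integration by parts in (\ref{oddscheme1}) and (\ref{oddscheme3}), and the Gauss--Radau projection $P_h^{-}$ for $e_v$, matching the upwind flux $\widehat v=v_h^{-}$. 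Writing $e_\star=\xi_\star-\eta_\star$ in each variable, the projection errors $\eta_\star$ are controlled by the standard approximation estimate of Section 2.

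The heart of the argument is to obtain the analogues of the auxiliary estimates (\ref{xi_w}) and (\ref{xi_v}), namely $\|\xi_w\|\le C\|(\xi_u)_t\|+Ch^{k+1}$ and $\|\xi_v\|\le C\|\xi_w\|+Ch^{k+1}$. These come from generalizing Lemmas \ref{fifth_lemmav}, \ref{fifth_lemma_wxxjump} and \ref{fifth_lemmawx} to the present setting, i.e. establishing the relationship between the high-order derivatives and interface jumps of the numerical solution and the auxiliary variables. Concretely, from (\ref{oddscheme2}) one recovers control of $\|(v_h)_x\|$ and the jump $\jump{v_h}$ by $\|w_h\|$ (the exact analogue of Lemma \ref{fifth_lemmav}), while from the error version of (\ref{oddscheme1}) one uses test functions built from Legendre polynomials, as in the proof of Lemma \ref{fifth_lemma_wxxjump}, to bound $\|(w_h)_x^{\frac{n-1}{2}}\|$ together with the jumps of the lower derivatives by $\|(u_h)_t\|$, and then invokes the discrete Poincar\'e inequality to descend to $\|w_h\|$ itself. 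The resulting chain $\|\xi_v\|\lesssim\|\xi_w\|\lesssim\|(\xi_u)_t\|$ is exactly what feeds the energy argument.

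With these in hand I would run the same energy procedure as for Theorem \ref{errorestimat_thm2}: take $p=\xi_u$, $s=-\xi_v$, $q=\xi_w$ in the error equations, sum over $j$, and simultaneously differentiate the error equations in time and test with $(\xi_u)_t$, $-(\xi_v)_t$, $(\xi_w)_t$. Adding the two energy identities produces an equation of the form (\ref{energyequation}) in the combined quantity $\|\xi_u\|^2+\|(\xi_u)_t\|^2$, with nonnegative jump terms on the left and a right-hand side $\Upsilon+\Lambda$. The term $\Upsilon$ is bounded directly by $Ch^{k+1}(\|\xi_u\|+\|(\xi_u)_t\|)+Ch^{2k+2}$, and $\Lambda$ is integrated in time and treated by integration by parts so that its time-boundary contributions are absorbed using the initial-condition estimates. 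The proof of the initial estimates requires choosing $u_h(x,0)$ so that $w_h(x,0)$ equals the appropriate projection of $w(x,0)$, yielding $\|u_t(x,0)-(u_h)_t(0)\|\le Ch^{k+1}$ as in (\ref{fifth_ini_ut}).

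The main obstacle I anticipate is the bookkeeping in the generalized Lemma \ref{fifth_lemma_wxxjump}: for arbitrary odd $n$ one must design a hierarchy of test functions (successively $p$ a Legendre combination vanishing to high order at $x_{\jr}^{-}$, then $p=1,\xi,\xi^2,\dots$) to peel off, one by one, the jumps $\jump{(w_h)_x^m}$ for $m=0,1,\dots,\frac{n-3}{2}$, each scaled by the correct power $h^{-(m+1/2)}$, and to verify that the highest-derivative norm $\|(w_h)_x^{\frac{n-1}{2}}\|$ is controlled via the inverse inequality. Once that inductive scaling structure is set up, the energy estimate and Gronwall step are routine copies of the fifth-order proof, so I would state the generalized lemmas, indicate the Legendre/inverse-inequality mechanism, and then refer to Theorem \ref{errorestimat_thm2} for the remaining identical steps.
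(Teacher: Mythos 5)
Your proposal is correct and takes essentially the same approach as the paper: the paper omits this proof entirely, stating it is analogous to the seventh-order case (Theorem \ref{errorestimat_thmseven}), whose proof is in turn deferred to that of Theorem \ref{errorestimat_thm2}, and your plan is a faithful, more detailed reproduction of exactly that argument (the projection choices, the derivative/jump lemmas generalizing Lemmas \ref{fifth_lemmav}--\ref{fifth_lemmawx}, the doubled energy estimate with the time-differentiated error equations, and Gronwall). One small correction to your sketch: the $h$-weights in the generalized Lemma \ref{fifth_lemma_wxxjump} should strengthen as the derivative order decreases, i.e. $\jump{(w_h)_x^{m}}$ carries the factor $h^{-(\frac{n-2}{2}-m)}$ (so $\jump{w_h}$ gets $h^{-\frac{n-2}{2}}$, matching the $h^{-3/2}$ weight when $n=5$), rather than $h^{-(m+1/2)}$ as written.
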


}

\section{Extension to the fourth order equation in multi-dimensional
Cartesian meshes}

In this section, we will extend our DG scheme to multi-dimensional
Cartesian meshes for fourth-order equation, as an example of
multi-dimensional extension of our schemes. Without loss of
generality, we describe
our DG method and prove a priori optimal error estimates in
two dimensions ($d=2$), however all the arguments we present in
our analysis depend on the tensor product structure of the meshes
and can be easily extended to higher dimensions ($d>2$).

Hence, from
now on, we shall restrict ourselves to the following two-dimensional
problem:
\begin{align}\label{2d_fourth}
&u_{t}+\Delta^{2}u=0, ~~~~(\textbf{x},t)\in \Omega\times (0,T],
\end{align}
with the periodic boundary condition and initial condition
$$u(\textbf{x},0)=u_{0}(\textbf{x}),$$
where $u_{0}(\textbf{x})$ is a smooth function of $\textbf{x}=(x,y)$,
$\Omega\in R^{2}$ is a bounded rectangular domain.

\subsection{The numerical scheme}

Firstly, we rewrite the fourth-order equation (\ref{2d_fourth})
into a system of second-order equations,
\begin{align}
&u_{t}+\Delta w=0, \label{2d_fourth_system1}\\
&w-\Delta u=0.\label{2d_fourth_system2}
\end{align}
In order to define our DG method for the system
(\ref{2d_fourth_system1})-(\ref{2d_fourth_system2}), let us introduce
some notations.
Let $\Omega_{h}$ denote a tessellation of $\Omega$ with shape-regular elements $K$, and the union of the boundary face of element
$K\in\Omega_{h}$, denoted as $\partial\Omega=\mathop{\cup}\limits_{K\in\Omega_{h}}\partial{K}$. We denote the diameter of $K$ by $h_{K}$, and set $h=\max\limits_{K}h_{K}$.
The finite element spaces with the mesh $\Omega_{h}$ are of the form
\begin{align*}
W_{h}&=\{\eta\in L^{2}(\Omega): \eta|_{K}\in\mathcal{Q}^{k}(K),\forall K \in \Omega_{h}\},
\end{align*}
where $\mathcal{Q}^{k}(K)$ is the space of tensor product of polynomials of degree at most $k\geq{0}$ on $K\in\Omega_{h}$ in each variable defined on $K$.

Since the approximation space in discontinuous Galerkin methods
consists of piecewise polynomials, we need to have a way of denoting
the value of the approximation on the ``left" and ``right" side of an
element boundary $e$. We give the designation $K_{L}$ for element to
the left side of $e$, and $K_{R}$ for element to the right side of $e$
(We refer to \cite{Yankdv} for a proper definition of ``left" and ``right"
in our context, for rectangular meshes these are the usual left and bottom
directions denoted as ``left" and right and top directions denoted as
``right"). The normal vector $\nu_{L}$ and $\nu_{R}$ on the edge $e$ point exterior to $K_{L}$ and $K_{R}$ respectively. Assuming $\psi$ is a function defined on $K_{L}$ and $K_{R}$, let $\psi^{-}$ denote $(\psi|_{K_{L}})|_{e}$ and $\psi^{+}$ denote $(\psi|_{K_{R}})|_{e}$, the left and right traces,
respectively.
The DG method is defined as following: we seek $u_{h}$ and $w_{h}$ in the finite element space $W_{h}\times W_{h}$, such that for all $p,~q\in W_{h}$ we have
\begin{align}
((u_{h})_{t},p)_{K}+(w_{h},\Delta p)_{K}+\langle \widetilde{\nabla w}\cdot \textbf{n},p\rangle_{\partial K}-\langle \widetilde{w},\nabla p\cdot \textbf{n}\rangle_{\partial K}&=0,\label{2dfourthscheme1}\\
(w_{h},q)_{K}-(u_{h},\Delta q)_{K}-\langle \widehat{\nabla u}\cdot \textbf{n},q\rangle_{\partial K}+\langle \widehat{u},\nabla q\cdot \textbf{n}\rangle_{\partial K}&=0.\label{2dfourthscheme2}
\end{align}
Here \textbf{n} denotes the outward unit vector to $\partial K$, and
\begin{align}
(p,q)_{K}:=\int_{K}p(x,y)q(x,y)dxdy,~~~\langle p,\nabla q\cdot \textbf{n}\rangle=\int_{\partial K}p(x,y)(\nabla q(x,y)\cdot \textbf{n})ds,
\end{align}
for any $p,~q\in H^{1}_{\Omega_{h}}$. To complete the definition of the DG scheme we need to define the numerical fluxes $\widehat{u},~\widehat{\nabla u},~\widetilde{w},~\widetilde{\nabla w}$. We can choose the alternating fluxes
\begin{align}\label{2d_fourth_flux1}
\widehat{u}=u_{h}^{+},~\widehat{\nabla u}=(\nabla u_{h})^{+},~\widetilde{w}=w_{h}^{-},~\widetilde{\nabla w}=(\nabla w_{h})^{-},
\end{align}
or
\begin{align}\label{2d_fourth_flux2}
\widehat{u}=u_{h}^{-},~\widehat{\nabla u}=(\nabla u_{h})^{-},~\widetilde{w}=w_{h}^{+},~\widetilde{\nabla w}=(\nabla w_{h})^{+}.
\end{align}

\subsection{$L^{2}$ stability}
In this subsection, we will prove the DG method defined in
(\ref{2dfourthscheme1})-(\ref{2dfourthscheme2}) for the
fourth-order equation satisfies the following $L^{2}$ stability.
\begin{thm}
The solution given by the DG method defined by
(\ref{2dfourthscheme1})-(\ref{2dfourthscheme2}) satisfies
\begin{align}\label{2dstability}
\frac{1}{2}\frac{d}{dt}\int_{\Omega_{h}}u_{h}^{2}(\textbf{x},t)d\textbf{x}+\int_{\Omega_{h}}w_{h}^{2}(\textbf{x},t)d\textbf{x}=0.
\end{align}
\end{thm}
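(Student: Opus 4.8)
The plan is to follow the same energy argument used in the one-dimensional fourth order case (Theorem \ref{stability_linear_forth}), with Green's second identity playing the role that integration by parts played there. First I would take $p=u_{h}$ in (\ref{2dfourthscheme1}) and $q=w_{h}$ in (\ref{2dfourthscheme2}), add the two identities, and sum over all elements $K\in\Omega_{h}$. The temporal term produces $((u_{h})_{t},u_{h})_{K}=\frac{1}{2}\frac{d}{dt}\|u_{h}\|_{K}^{2}$ and the reaction term produces $(w_{h},w_{h})_{K}=\|w_{h}\|_{K}^{2}$, so that summing over $K$ recovers exactly the two volume integrals in (\ref{2dstability}). The remaining contributions are the two Laplacian volume terms $(w_{h},\Delta u_{h})_{K}-(u_{h},\Delta w_{h})_{K}$ together with the four numerical-flux boundary terms.

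The second step is to apply Green's second identity on each $K$,
\begin{align*}
(w_{h},\Delta u_{h})_{K}-(u_{h},\Delta w_{h})_{K}=\langle w_{h},\nabla u_{h}\cdot\textbf{n}\rangle_{\partial K}-\langle u_{h},\nabla w_{h}\cdot\textbf{n}\rangle_{\partial K},
\end{align*}
which converts the two Laplacian volume terms into element-boundary traces. The identity then reduces to
\begin{align*}
\frac{1}{2}\frac{d}{dt}\int_{\Omega_{h}}u_{h}^{2}\,d\textbf{x}+\int_{\Omega_{h}}w_{h}^{2}\,d\textbf{x}+\Theta=0,
\end{align*}
where $\Theta$ collects, summed over all $\partial K$, six boundary terms: the two traces just produced by Green's formula, the pair $\langle\widetilde{\nabla w}\cdot\textbf{n},u_{h}\rangle_{\partial K}-\langle\widetilde{w},\nabla u_{h}\cdot\textbf{n}\rangle_{\partial K}$ from (\ref{2dfourthscheme1}), and the pair $-\langle\widehat{\nabla u}\cdot\textbf{n},w_{h}\rangle_{\partial K}+\langle\widehat{u},\nabla w_{h}\cdot\textbf{n}\rangle_{\partial K}$ from (\ref{2dfourthscheme2}). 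Everything now reduces to showing $\Theta=0$.

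The decisive step is to rewrite $\Theta$ as a sum over edges $e$. Each interior edge is shared by $K_{L}$ and $K_{R}$ with outward normals satisfying $\nu_{R}=-\nu_{L}$, while the fluxes $\widehat{u},\widehat{\nabla u},\widetilde{w},\widetilde{\nabla w}$ are single-valued on $e$. Fixing the reference normal $\textbf{n}_{e}=\nu_{L}$ and adding the $K_{L}$ and $K_{R}$ contributions, the terms carrying $\nabla u_{h}\cdot\textbf{n}_{e}$ combine with factors $(w_{h}^{-}-\widetilde{w})$ and $(\widetilde{w}-w_{h}^{+})$, those carrying $\nabla w_{h}\cdot\textbf{n}_{e}$ combine with factors $(\widehat{u}-u_{h}^{-})$ and $(u_{h}^{+}-\widehat{u})$, and the two remaining families assemble into the jumps $-(\widetilde{\nabla w}\cdot\textbf{n}_{e})\jump{u_{h}}$ and $(\widehat{\nabla u}\cdot\textbf{n}_{e})\jump{w_{h}}$. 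Inserting the alternating fluxes (\ref{2d_fourth_flux1}), i.e. $\widetilde{w}=w_{h}^{-}$, $\widehat{u}=u_{h}^{+}$, $\widetilde{\nabla w}=(\nabla w_{h})^{-}$, $\widehat{\nabla u}=(\nabla u_{h})^{+}$, the surviving $(\nabla u_{h}^{+}\cdot\textbf{n}_{e})\jump{w_{h}}$ contributions cancel against the $\widehat{\nabla u}$ term and the surviving $(\nabla w_{h}^{-}\cdot\textbf{n}_{e})\jump{u_{h}}$ contributions cancel against the $\widetilde{\nabla w}$ term, so each interior edge contributes zero; the choice (\ref{2d_fourth_flux2}) works identically with $+$ and $-$ interchanged, and periodicity removes the edges on $\partial\Omega$. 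This is precisely the two-dimensional analogue of the identity $B_{1}(v_{h},u_{h})+B_{2}(u_{h},v_{h})=0$ in (\ref{B1+B2}).

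I expect the main obstacle to lie in this last step: one must track the sign flip $\nu_{R}=-\nu_{L}$ consistently through all six boundary terms and verify that single-valuedness of the fluxes collapses each edge's contribution into the jump structure \emph{before} the alternating choice is imposed. Once the grouping is correct the cancellation is forced by the ``opposite sides'' rule, and since the tensor-product structure of the Cartesian mesh lets the $x$-edges and $y$-edges be handled by the same one-dimensional-type accounting, no genuinely new difficulty appears in two dimensions beyond the careful bookkeeping.
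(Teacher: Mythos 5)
Your proposal is correct and follows essentially the same route as the paper: take $p=u_{h}$, $q=w_{h}$, integrate by parts (Green's identity) on each element, and cancel the six boundary terms edge by edge using the single-valued alternating fluxes and periodicity. The only difference is that you spell out the per-edge jump bookkeeping explicitly, whereas the paper simply asserts the identity $H_{\partial K_{1}\cap e}(u_{h},w_{h})+H_{\partial K_{2}\cap e}(u_{h},w_{h})=0$ without detail; your expanded version of that cancellation is accurate.
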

\begin{proof}
We take the test functions $p=u_{h}$, $q=w_{h}$ in
(\ref{2dfourthscheme1}) and (\ref{2dfourthscheme2}) respectively,
and integrate by parts to obtain
$$
((u_{h})_{t},u_{h})_{K}+(w_{h},w_{h})_{K}+H_{\partial K}(u_{h},w_{h})=0,
$$
where
\begin{align*}
H_{\partial K}(p,q)=&\langle w_{h},\nabla u_{h}\cdot \textbf{n}\rangle_{\partial K}+\langle \widetilde{\nabla w}\cdot \textbf{n},p\rangle_{\partial K}-\langle \widetilde{w},\nabla p\cdot \textbf{n}\rangle_{\partial K}-\langle u_{h},\nabla w_{h}\cdot \textbf{n}\rangle_{\partial K}\\
&-\langle \widehat{\nabla u}\cdot \textbf{n},q\rangle_{\partial K}+\langle \widehat{u},\nabla q\cdot \textbf{n}\rangle_{\partial K}.
\end{align*}
Next we sum over the $K$.  Since
\begin{align}
H_{\partial K_{1}\cap e}(u_{h},w_{h})+H_{\partial K_{2}\cap e}(u_{h},w_{h})=0,
\end{align}
with the numerical flux (\ref{2d_fourth_flux1}) or (\ref{2d_fourth_flux2}),
here we suppose $e$ is an inter-element face shared with the
elements $K_{1}$ and $K_{2}$, we can immediately get the $L^{2}$-stability
result (\ref{2dstability}).
\end{proof}

\subsection{Error estimates}

In this subsection, we obtain a priori error estimates for the
approximation $(u_{h},w_{h})$ given by the DG scheme
(\ref{2dfourthscheme1})-(\ref{2dfourthscheme2}).
{%\color{red}
The proof of optimal error estimate in the multi-dimensional case is
different from that in the one-dimensional case, in the definition
and analysis of suitable projections.  Since the projection terms
in the error equations do not vanish as in the one-dimensional case,
we need to obtain certain superconvergence properties of the projections
to deal with these terms.}

\begin{thm}\label{errorestimate_thm2dfourth}
Let $u$ be the solution of the equation (\ref{2d_fourth}) with
periodic boundary condition, and $w=\Delta u$.
Let $u_{h}$ and $w_{h}$ be the numerical solution of the
DG scheme (\ref{2dfourthscheme1})-(\ref{2dfourthscheme2}).
If we use $W_{h}$ as the
space with piecewise polynomials $\mathcal{Q}^{k},~k\geq 1$.
Then for Cartesian meshes, we have
\begin{align*}
\|u(t)-u_{h}(t)\|+\int_{0}^{t}\|w(t)-w_{h}(t)\|dt\leq Ch^{k+1} .
\end{align*}
Here $C$ depends on $\|u\|_{L^{\infty}((0,T);W^{2k+6,\infty})}$,
$\|u_{t}\|_{L^{\infty}((0,T);W^{k+1,\infty})}$, and on $t$,
but is independent of $h$.
\end{thm}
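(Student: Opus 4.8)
The plan is to follow the energy framework established in Theorem \ref{errorestimat_thm1}, but to replace the one-dimensional Gauss--Radau projections by suitable tensor-product projections adapted to the Cartesian mesh, and to confront the new difficulty flagged in the text: the projection error terms in the error equations no longer vanish. First I would fix the flux choice (\ref{2d_fourth_flux1}) without loss of generality, set $e_{u}=u-u_{h}$, $e_{w}=w-w_{h}$, and derive the cell error equations by subtracting the exact-solution identities from (\ref{2dfourthscheme1})--(\ref{2dfourthscheme2}). I would then introduce a tensor-product projection $\Pi$ built from the one-dimensional projection $P_{1h}^{\pm}$ in each coordinate direction, splitting $e_{u}=\eta_{u}-\xi_{u}$ and $e_{w}=\eta_{w}-\xi_{w}$ with $\eta_{\cdot}=u-\Pi u$ (the projection error) and $\xi_{\cdot}=u_{h}-\Pi u\in W_{h}$ (the finite element part), choosing the $+$ projection for $u$ and the $-$ projection for $w$ to match the alternating fluxes.

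Next I would take $p=\xi_{u}$ and $q=\xi_{w}$ in the error equations and add them. The choice of alternating fluxes is designed so that the interface terms involving $\xi_{u}$ and $\xi_{w}$ cancel exactly when summed over all elements, exactly as $B_{1}+B_{2}=0$ in (\ref{B1+B2}); the symmetric volume terms $(\xi_{w},\Delta\xi_{u})$ and $(\xi_{u},\Delta\xi_{w})$ also cancel after integration by parts. This isolates an energy identity of the schematic form
\begin{align*}
\frac{1}{2}\frac{d}{dt}\|\xi_{u}\|^{2}+\|\xi_{w}\|^{2}
=((\eta_{u})_{t},\xi_{u})_{\Omega_{h}}+(\eta_{w},\xi_{w})_{\Omega_{h}}+R,
\end{align*}
where $R$ collects the interface contributions involving the projection errors $\eta_{u}$, $\eta_{w}$ that survive because $\Pi$ does not annihilate traces and normal-derivative traces in the full tensor-product sense. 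Controlling $R$ is the crux of the argument.

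The hard part will be estimating $R$. In one dimension the Gauss--Radau conditions (\ref{projection1.2}) force the boundary values and first derivatives of $\eta_{u},\eta_{w}$ to vanish at the matching endpoints, so $R=0$; in two dimensions the tensor-product projection only controls these quantities along the edges in one direction at a time, leaving residual edge integrals. The plan is to establish a superconvergence property of the tensor-product projection: an estimate of the form $|R|\le Ch^{k+1}(\|\xi_{u}\|+\|\xi_{w}\|)$, obtained by exploiting that along each edge the uncontrolled component of $\eta$ is $L^{2}$-orthogonal to a sufficiently rich polynomial trace space, so pairing it against the piecewise-polynomial trace of $\xi$ gains an extra half power of $h$ beyond the naive scaling. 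This is precisely the mechanism that forces the high regularity index $\|u\|_{W^{2k+6,\infty}}$ appearing in the constant $C$. Once $|R|$ is bounded this way, I would combine it with the Cauchy--Schwarz bounds $|((\eta_{u})_{t},\xi_{u})|\le Ch^{k+1}\|\xi_{u}\|$ and $|(\eta_{w},\xi_{w})|\le Ch^{k+1}\|\xi_{w}\|$, absorb the $\|\xi_{w}\|$ terms into the $\|\xi_{w}\|^{2}$ on the left via Young's inequality, and reach
\begin{align*}
\frac{d}{dt}\|\xi_{u}\|^{2}+\|\xi_{w}\|^{2}\le Ch^{2k+2}+C\|\xi_{u}\|^{2}.
\end{align*}

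Finally I would apply Gronwall's inequality, using the initial condition $u_{h}(\cdot,0)=\Pi u_{0}$ so that $\|\xi_{u}\|(0)=0$, to conclude $\|\xi_{u}\|(t)+\int_{0}^{t}\|\xi_{w}\|\,dt\le Ch^{k+1}$. The triangle inequality $\|e_{u}\|\le\|\xi_{u}\|+\|\eta_{u}\|$ together with the standard approximation bound $\|\eta_{u}\|,\|\eta_{w}\|\le Ch^{k+1}$ then yields the stated estimate. I expect the entire difficulty to lie in defining the correct tensor-product projection and proving its superconvergence so that $R$ is genuinely of order $h^{k+1}$; the energy step, flux cancellation, and Gronwall closure are then routine extensions of the one-dimensional proof of Theorem \ref{errorestimat_thm1}.
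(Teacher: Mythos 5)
Your skeleton matches the paper's proof exactly: the tensor-product projection built from $P_{1h}^{\pm}$ in each direction (the paper's $\Pi^{\pm}$ in (\ref{2dprojection})), the splitting $e_{u}=\eta_{u}-\xi_{u}$, $e_{w}=\eta_{w}-\xi_{w}$ with opposite-sided projections matched to the alternating fluxes, the energy identity after the $\xi$-flux cancellation, and the Gronwall closure with $u_{h}(0)=\Pi^{+}u(0)$ are all precisely what the paper does. The problem lies in the crux you yourself flag, and it is twofold. First, the residual $R$ is not made of interface terms only: in two dimensions the volume terms $(\eta_{w},\Delta\xi_{u})_{K}$ and $(\eta_{u},\Delta\xi_{w})_{K}$ also survive, because $\Delta\xi_{u}=(\xi_{u})_{xx}+(\xi_{u})_{yy}$ and $(\xi_{u})_{xx}$, while of degree $k-2$ in $x$, still has degree $k$ in $y$, so it is not in $\mathcal{Q}^{k-2}(K)$ and the orthogonality (\ref{2dprojection1}) does not annihilate it. These volume terms (the term $T_{1}$ in the paper's Lemma \ref{2Dsupest}) are of the same size as the edge terms and must be part of the estimate.

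Second, and more seriously, the quantitative mechanism you propose cannot reach the needed bound. Estimating the surviving terms one by one gives at best $Ch^{k}\|\xi\|_{L^{2}(K)}$ per element (an $O(h^{k+1})$ projection error paired against an inverse estimate costing $h^{-1}$), and summing over the $O(h^{-2})$ elements by Cauchy--Schwarz costs a further factor $h^{-1}$, so the naive global bound is $Ch^{k-1}\|\xi\|$ --- two full powers of $h$ short of the target $Ch^{k+1}\|\xi\|$. A gain of half a power from edge orthogonality is therefore far from sufficient; one needs the superconvergent per-element bound $|B_{K}(\eta,\cdot)|\leq Ch^{k+2}\|\xi\|_{L^{2}(K)}$, i.e.\ a gain of two orders over the naive scaling. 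The paper obtains this, and crucially not term by term: Lemma \ref{2D_sup1} proves that the \emph{entire} bilinear forms $B^{1}_{K}(\eta_{w},p)$ and $B^{2}_{K}(\eta_{u},q)$ vanish identically whenever the underlying function is in $\mathcal{P}^{k+2}(K)$ --- a cancellation between volume and edge contributions, checked monomial by monomial --- and then a degree-$(k+2)$ Taylor expansion with remainder $R_{k+3}=O(h^{k+3})$ converts this exactness into the bound $Ch^{k+2}\|u\|_{W^{2k+4,\infty}}\|\xi\|_{L^{2}(K)}$ of Lemma \ref{2Dsupest}. (This Taylor step, not edge orthogonality, is also the true source of the high regularity index $W^{2k+6,\infty}$ in the constant.) Without this exactness-plus-Taylor argument, or an equivalent device that exploits the cancellation across \emph{all} terms of $B_{K}$ simultaneously, your energy step cannot be closed, so the proposal as written has a genuine gap at its central point.
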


\subsection{Proof of the error estimates}
In this subsection we prove Theorem \ref{errorestimate_thm2dfourth}
stated in the previous section. To do that, firstly, we define the
special projection in Cartesian meshes, similar to the Gauss-Radau
projections in Cartesian meshes \cite{cockburnlocal2001, meng, Xuoptimal}.

On a rectangle $K_{i,j}=I_{i}\times J_{j}$, for $u\in W^{1,\infty}(\overline{K})$, we define
\begin{align}\label{2dprojection}
\Pi^{\pm}u:=P_{1hx}^{\pm}\otimes P_{1hy}^{\pm}u,
\end{align}
with the subscripts indicating the application of the one-dimensional
operators $P_{1h}^{\pm}$ with respect to the corresponding variable.
To be more specific, we shall list explicitly the formulations
for $\Pi^{-}u$, on a rectangular element
$K_{i,j}=I_{i}\times J_{j}:=(x_{\il},x_{\ir})\times (y_{\jl},y_{\jr})$.
We have
\begin{subequations}
\label{shuadd20}
\begin{align}
\int_{K_{i,j}}\Pi^{-}u(x,y)v_{h}(x,y)dxdy&=\int_{K_{i,j}}u(x,y)v_{h}(x,y)dxdy,\label{2dprojection1}\\
\int_{I_{i}}\Pi^{-}u(x,y_{\jr}^{-})v_{h}(x,y_{\jr}^{-})dx&=\int_{I_{i}}u(x,y_{\jr}^{-})v_{h}(x,y_{\jr}^{-})dx,\label{2dprojection2}\\
\int_{I_{i}}(\Pi^{-}u)_{y}(x,y_{\jr}^{-})v_{h}(x,y_{\jr}^{-})dx&=\int_{I_{i}}u_{y}(x,y_{\jr}^{-})v_{h}(x,y_{\jr}^{-})dx,\label{2dprojection3}\\
\int_{J_{j}}\Pi^{-}u(x_{\ir}^{-},y)v_{h}(x_{\ir}^{-},y)dy&=\int_{J_{j}}u(x_{\ir}^{-},y)v_{h}(x_{\ir}^{-},y)dy,\label{2dprojection4}\\
\int_{J_{j}}(\Pi^{-}u)_{x}(x_{\ir}^{-},y)v_{h}(x_{\ir}^{-},y)dy&=\int_{J_{j}}u_{x}(x_{\ir}^{-},y)v_{h}(x_{\ir}^{-},y)dy,\label{2dprojection5}\\
\Pi^{-}u(x_{\ir}^{-},y_{\jr}^{-})&=u(x_{\ir}^{-},y_{\jr}^{-}),\label{2dprojection6}\\
(\Pi^{-}u)_{x}(x_{\ir}^{-},y_{\jr}^{-})&=u_{x}(x_{\ir}^{-},y_{\jr}^{-}),\label{2dprojection7}\\
(\Pi^{-}u)_{y}(x_{\ir}^{-},y_{\jr}^{-})&=u_{y}(x_{\ir}^{-},y_{\jr}^{-}),\label{2dprojection8}\\
(\Pi^{-}u)_{xy}(x_{\ir}^{-},y_{\jr}^{-})&=u_{xy}(x_{\ir}^{-},y_{\jr}^{-}),\label{2dprojection9}
\end{align}
\end{subequations}
for all $v_{h}\in \mathcal{Q}^{k-2}(K)$  and $K\in \Omega_{h}$. Similarly, we can define the projection $\Pi^{+}$.
Existence and the optimal approximation property of the projection $\Pi^{\pm}$ are established in the following lemma.
%Since $H^{2}(K)$ are embedded in $C^{0}(\overline{K})$, for $d=2,3$, $\Pi^{\pm}v$ in (\ref{2dprojection}), is well defined for $v\in H^{2}(K)$. We have approximation results in the lemma below.
\begin{lemma}
Assume u is sufficiently smooth, then there exists a
unique $\Pi^{-}u\in W_{h}$, satisfying (\ref{shuadd20}).
Moreover, there holds the following approximation property
\begin{align*}
\|v-\Pi^{\pm}v\|_{L^{2}(K)}+h\|v-\Pi^{\pm}v\|_{H^{1}(K)}\leq Ch^{k+1}\|u\|_{H^{k+1}(K)}.
\end{align*}
\end{lemma}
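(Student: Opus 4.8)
The plan is to exploit the tensor-product structure $\Pi^{\pm}=P_{1hx}^{\pm}\otimes P_{1hy}^{\pm}$ throughout, reducing every two-dimensional statement to the one-dimensional projection $P_{1h}^{\pm}$, whose existence, uniqueness and approximation properties are already recorded in Section 2. First I would settle existence and uniqueness by a dimension count on $K_{i,j}$: the space $\mathcal{Q}^{k}(K)$ has dimension $(k+1)^{2}$, while the conditions (\ref{2dprojection1})--(\ref{2dprojection9}) number $(k-1)^{2}$ interior moments (against $\mathcal{Q}^{k-2}$), plus $4(k-1)$ edge conditions (each testing against a degree-$(k-2)$ trace along the edge), plus the $4$ corner conditions on the value and the $x$-, $y$- and $xy$-derivatives, which sum to $(k-1)^{2}+4(k-1)+4=(k+1)^{2}$. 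It therefore suffices either to show that the only $z\in\mathcal{Q}^{k}(K)$ satisfying the homogeneous version of (\ref{shuadd20}) is $z=0$, or, more directly, to verify that the operator $P_{1hx}^{-}\otimes P_{1hy}^{-}$ defined in (\ref{2dprojection})—which is well defined for $k\geq1$ because each factor is unisolvent on $\mathcal{P}^{k}$—satisfies all nine relations. The latter is done by applying the one-dimensional defining identities (\ref{projection1.1})--(\ref{projection1.2}) in one variable at a time, invoking Fubini's theorem and the commutation of $P_{1hx}^{-}$ with $\partial_{y}$ (and of point evaluation with the tensor structure).

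For the approximation estimate I would use the telescoping identity
\begin{align}
v-\Pi^{-}v=(I-P_{1hx}^{-})v+P_{1hx}^{-}(I-P_{1hy}^{-})v,\nonumber
\end{align}
which yields
\begin{align}
\|v-\Pi^{-}v\|_{L^{2}(K)}\leq\|(I-P_{1hx}^{-})v\|_{L^{2}(K)}+\|P_{1hx}^{-}\|_{L^{2}\to L^{2}}\,\|(I-P_{1hy}^{-})v\|_{L^{2}(K)}.\nonumber
\end{align}
The first term is controlled by the one-dimensional approximation estimate of Section 2 applied in the $x$-direction and integrated over $y$, giving an $O(h^{k+1})$ bound; the second term is handled identically in the $y$-direction, once the uniform-in-$h$ $L^{2}$-boundedness of $P_{1hx}^{-}$ is available. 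The weighted contribution $h\|v-\Pi^{-}v\|_{H^{1}(K)}$ is obtained by differentiating the same decomposition, estimating $\partial_{x}$ and $\partial_{y}$ of each summand through the one-dimensional bounds (together with an inverse inequality applied to the already-projected factor), and absorbing the extra power of $h$.

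The main obstacle is the uniform-in-$h$ boundedness of the one-dimensional operators $P_{1h}^{\pm}$ in $L^{2}$ and $H^{1}$, which is precisely what legitimizes the mixed term $P_{1hx}^{-}(I-P_{1hy}^{-})v$; I would establish it by scaling to the reference interval, where $P_{1h}^{\pm}$ is a fixed finite-dimensional (hence bounded) operator, and then tracking the powers of $h$ under the affine map. A secondary but delicate point is verifying that the tensor-product operator reproduces the \emph{derivative}-valued edge and corner conditions (\ref{2dprojection3}), (\ref{2dprojection5}) and (\ref{2dprojection7})--(\ref{2dprojection9}): these couple the two one-dimensional projections, so I must check that the derivative-matching property $(P_{1h}^{\pm}u)_{x}(x_{\jlr}^{\pm})=u_{x}(x_{\jlr})$ in one variable survives projection in the other, which is where the moment condition in $x$ and the nodal derivative condition in $y$ must be combined carefully.
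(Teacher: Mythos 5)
Your strategy is genuinely different from the paper's (the paper proves uniqueness by a kernel argument and gets the approximation property from stability of the projection plus standard approximation theory), and parts of it are sound: the dimension count $(k-1)^2+4(k-1)+4=(k+1)^2$ is correct, and the telescoping identity $v-\Pi^{-}v=(I-P_{1hx}^{-})v+P_{1hx}^{-}(I-P_{1hy}^{-})v$ is a legitimate reduction to one dimension. However, two of your steps fail as written. First, the route you prefer for the algebraic part — verifying that $P_{1hx}^{-}\otimes P_{1hy}^{-}$ satisfies all nine relations in (\ref{shuadd20}) — proves \emph{existence only}; combined with the dimension count it does not give uniqueness. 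Uniqueness is injectivity of the square linear map $z\mapsto(\ell_{1}(z),\dots,\ell_{(k+1)^2}(z))$ on $\mathcal{Q}^{k}(K)$, and your construction says nothing about its kernel: for $z\in\mathcal{Q}^{k}(K)$ one has $P_{1hx}^{-}\otimes P_{1hy}^{-}z=z$, so feeding polynomial data into the construction is circular, and solvability for right-hand sides of the special form $(\ell_{i}(u))_{i}$ implies injectivity only if you additionally show these data vectors exhaust $\mathbb{R}^{(k+1)^2}$, which you do not do. The argument you list merely as an alternative is the one that cannot be bypassed, and it is exactly the paper's proof: in the homogeneous case the edge and corner conditions force $\Pi^{-}u$ and its first derivatives to vanish identically on the edges $x=x_{i+\frac{1}{2}}$ and $y=y_{j+\frac{1}{2}}$, hence $\Pi^{-}u=(x-x_{i+\frac{1}{2}})^{2}(y-y_{j+\frac{1}{2}})^{2}Q$ with $Q\in\mathcal{Q}^{k-2}$, and then (\ref{2dprojection1}) with $v_{h}=Q$ gives $Q\equiv 0$.

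Second, the ``main obstacle'' you isolate is resolved incorrectly: $P_{1h}^{\pm}$ is \emph{not} uniformly bounded on $L^{2}$ — it is not even defined on $L^{2}$ — because its defining conditions (\ref{projection1.2}) include point values of the function \emph{and of its first derivative} at a cell endpoint, and these functionals are discontinuous in $L^{2}$ already on the reference interval (take $u_{n}(x)=x^{n}$ on $[-1,1]$: $\|u_{n}\|_{L^{2}}\to 0$ while $u_{n}'(1)=n$, so $\|P_{1h}^{-}u_{n}\|_{L^{2}}\to\infty$). ``Fixed finite-dimensional, hence bounded'' is a non sequitur: finite rank gives boundedness only when the defining functionals are continuous on the source space, and no scaling argument can remove an unboundedness that is present on the reference element itself. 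Consequently your key estimate $\|P_{1hx}^{-}(I-P_{1hy}^{-})v\|_{L^{2}(K)}\le\|P_{1hx}^{-}\|_{L^{2}\to L^{2}}\,\|(I-P_{1hy}^{-})v\|_{L^{2}(K)}$ is unavailable. The telescoping argument can be repaired, but only with stability in a norm strong enough to control the point data — for instance $\|P_{1h}^{-}g\|_{L^{2}(I)}\le C\left(\|g\|_{L^{2}(I)}+h\|g'\|_{L^{2}(I)}+h^{2}\|g''\|_{L^{2}(I)}\right)$ by scaling, or the $L^{\infty}$-type stability the paper invokes — applied to $g=(I-P_{1hy}^{-})v$ together with one-dimensional estimates for $\partial_{x}^{m}(I-P_{1hy}^{-})v$, $m=0,1,2$. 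This costs additional mixed derivatives of $v$ on the right-hand side (harmless under ``sufficiently smooth,'' though it no longer yields literally the norm $\|u\|_{H^{k+1}(K)}$), and it is in substance the stability-based argument the paper uses.
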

\begin{proof}
Assume that $u\equiv0$, then by (\ref{2dprojection2}), (\ref{2dprojection6}) and (\ref{2dprojection7}) we have
$$\Pi^{-}u(x,y_{\jr}^{-})=0.$$
Furthermore, by (\ref{2dprojection3}), (\ref{2dprojection8}) and (\ref{2dprojection9}) we get
$$(\Pi^{-}u)_{y}(x,y_{\jr}^{-})=0.$$
Similarly, we have
$\Pi^{-}u(x_{\ir}^{-},y)=0,$
and $(\Pi^{-}u)_{x}(x_{\ir}^{-},y)=0,$
then we obtain
$$
\Pi^{-}u=(x-x_{\ir}^{-})^{2}(y-y_{\jr}^{-})^{2}Q(x,y),~~~~Q(x,y)\in
\mathcal{Q}^{k-2}.
$$
Finally, we take $v_{h}=Q(x,y)$ in (\ref{2dprojection1}) to get
$Q(x,y)\equiv0$, therefore $\Pi^{-}u\equiv0$, and we have finished the proof of the uniqueness and also existence.
Since the one-dimensional operators $P_{1h}^{\pm}$ satisfy
$\|P_{1h}^{\pm}u\|_{L^{\infty}(I_{j})}\leq C \|u\|_{L^{\infty}(I_{j})}$,
similarly
in the two-dimensional case we also have
$\|\Pi^{\pm}u\|_{L^{\infty}(K_{i,j})}\leq C \|u\|_{L^{\infty}(K_{i,j})}$,
here $C$ is a constant independent of $h$.
Again, standard approximation theory implies the optimal approximating
estimates.
\end{proof}

To prove Theorem \ref{errorestimate_thm2dfourth}, firstly we need to
write the error equations. Let
$$e_{u}=u-u_{h}=\eta_{u}-\xi_{u}, \quad e_{w}=w-w_{h}=\eta_{w}-\xi_{w}$$
with
$$\eta_{u}=u-\Pi^{+}u,~\eta_{w}=w-\Pi^{-}w, \quad \xi_{u}=u_{h}-\Pi^{+}u,~\xi_{w}=w_{h}-\Pi^{-}w,$$
then
\begin{align}
((\xi_{u})_{t},p)_{K}+B^{1}_{K}(\xi_{w},p)=&((\eta_{u})_{t},p)_{K}+B^{1}_{K}(\eta_{w},p),\label{2d4errorequation1}\\
(\xi_{w},q)_{K}-B^{2}_{K}(\xi_{u},q)=&(\eta_{w},q)_{K}-B^{2}_{K}(\eta_{u},q)_{K},\label{2d4errorequation2}
\end{align}
where
\begin{align}
B^{1}_{K}(w,p)&=(w,\Delta p)_{K}-\langle w^{-},(\nabla p\cdot \textbf{n})\rangle_{\partial K}+\langle (\nabla w^{-}\cdot \textbf{n}),p\rangle_{\partial K},\label{2D_B1}\\
B^{2}_{K}(u,q)&=(u,\Delta q)_{K}-\langle u^{+},(\nabla q\cdot \textbf{n})\rangle_{\partial K}+\langle (\nabla u^{+}\cdot \textbf{n}),q\rangle_{\partial K}.\label{2D_B2}
\end{align}

Besides the standard approximation results, we will also prove  superconvergence results for the projections $\Pi^{\pm}$ in  Lemma \ref{2D_sup1} and \ref{2Dsupest}. The proof is using similar strategies and skills in \cite{cockburnlocal2001}.

\begin{lemma}\label{2D_sup1}
Let $B^1_{K}(\eta_{w},p)$ and $B^2_{K}(\eta_{u},q)$ be defined
by (\ref{2D_B1}) and (\ref{2D_B2}). Then we have for $k\geq 1$,
\begin{align}
B^{1}_{K}(\eta_{w},p)=0,~ B^{2}_{K}(\eta_{u},q)=0,~\forall u,w \in \mathcal{P}^{k+2}(K),~~p,~q\in \mathcal{Q}^{k}(K).\label{2DBsup}
\end{align}
%where the constant $C$ depends only on $k$.
\end{lemma}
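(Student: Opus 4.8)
The plan is to prove the two identities separately. They are related by the reflection $x\to -x,\ y\to -y$, which interchanges the $+$ and $-$ one-sided traces and swaps $\Pi^{+}$ with $\Pi^{-}$, carrying $B^{1}_{K}(\eta_{w},p)$ into $B^{2}_{K}(\eta_{u},q)$; so it suffices to establish $B^{1}_{K}(\eta_{w},p)=0$ and then read off $B^{2}_{K}(\eta_{u},q)=0$ verbatim. Since $B^{1}_{K}$ and $\Pi^{-}=P_{1hx}^{-}\otimes P_{1hy}^{-}$ are linear in $w$ and $\mathcal{Q}^{k}$ is a tensor-product space, I would first reduce to a single monomial $w=\phi(x)\psi(y)$ with $\phi=x^{a}$, $\psi=y^{b}$, $a+b\le k+2$. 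Writing $\phi_{h}=P_{1hx}^{-}\phi$, $\psi_{h}=P_{1hy}^{-}\psi$, $e_{\phi}=\phi-\phi_{h}$ and $e_{\psi}=\psi-\psi_{h}$, I split the projection error as $\eta_{w}=\phi\psi-\phi_{h}\psi_{h}=e_{\phi}\,\psi+\phi_{h}\,e_{\psi}$ and use linearity of $B^{1}_{K}$ to treat the two pieces one at a time.

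For $g=e_{\phi}\,\psi$, I split $B^{1}_{K}(g,p)$ into its $x$-contribution (the volume term $(g,p_{xx})_{K}$ together with the two vertical edges of $\partial K$, where $\nabla p\cdot\mathbf{n}=\pm p_{x}$) and its $y$-contribution ($(g,p_{yy})_{K}$ and the two horizontal edges). The $x$-contribution vanishes outright: the volume term is zero because $p_{xx}(\cdot,y)\in\mathcal{P}^{k-2}$ in $x$ and $e_{\phi}$ is $L^{2}(I_{i})$-orthogonal to $\mathcal{P}^{k-2}$, while the vertical-edge terms vanish because $e_{\phi}$ and $(e_{\phi})_{x}$ are zero at the right endpoint of $I_{i}$ (the internal ``$-$'' trace on the right edge of $K$) and, by Gauss--Radau matching on the left neighbour, also zero on the ``$-$'' trace of the left edge of $K$. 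For the $y$-contribution I integrate $(g,p_{yy})_{K}$ by parts twice in $y$; the boundary terms so produced cancel exactly against the horizontal-edge flux terms, the key point being that $g=e_{\phi}\psi$ is continuous across the horizontal edges (since $e_{\phi}$ is independent of the $y$-cell and $\psi$ is a global polynomial). This leaves only the single volume residual $\int_{K}e_{\phi}(x)\,\psi''(y)\,p\,dx\,dy$.

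The mirror piece $g=\phi_{h}\,e_{\psi}$ is handled symmetrically: its $y$-contribution vanishes by $\mathcal{P}^{k-2}$-orthogonality in $y$ and the vanishing of $e_{\psi},(e_{\psi})_{y}$ on the relevant horizontal traces, while integrating the $x$-volume term by parts and cancelling the vertical-edge fluxes leaves the residual $\int_{K}\phi_{h}''(x)\,e_{\psi}(y)\,p\,dx\,dy$ plus two left-edge terms proportional to the endpoint mismatches $\phi(x_{\il})-\phi_{h}(x_{\il}^{+})$ and $\phi'(x_{\il})-\phi_{h}'(x_{\il}^{+})$ times $e_{\psi}$ (these do not cancel outright because $\phi_{h}$ is discontinuous across vertical edges). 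The decisive final step is a degree count. The residual $\int_{K}e_{\phi}\psi''p$ is nonzero only if $e_{\phi}\not\equiv0$ and $\psi''\not\equiv0$, i.e. $a\ge k+1$ and $b\ge2$, forcing $a+b\ge k+3$; the residual $\int_{K}\phi_{h}''e_{\psi}p$ needs $a\ge2$ and $b\ge k+1$, again $a+b\ge k+3$; and the mismatch terms need $a\ge k+1$ and $b\ge k+1$, so $a+b\ge 2k+2\ge k+3$ for $k\ge1$. Each contradicts $a+b\le k+2$, so every residual vanishes and $B^{1}_{K}(\eta_{w},p)=0$.

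I expect the main obstacle to be the bookkeeping of the one-sided traces in $B^{1}_{K}$: the edge cancellations hinge on reading the ``$-$'' trace on the left/bottom edges as the value from the neighbouring element, where the Gauss--Radau projection matches $w$ and its normal derivative, rather than as the internal trace. Once this is organized correctly the argument collapses to the transparent degree count above, which is also what forces the hypothesis $k\ge1$ (the mismatch terms are exactly what require $2k+2\ge k+3$). Finally, $B^{2}_{K}(\eta_{u},q)=0$ follows by applying the reflection described above, i.e. reversing the orientation of all one-sided traces and using that $\Pi^{+}$ matches at the left/bottom endpoints.
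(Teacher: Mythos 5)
Your proof is correct, and it rests on the same ingredients as the paper's: the tensor-product structure $\Pi^{-}=P^{-}_{1hx}\otimes P^{-}_{1hy}$, the one-dimensional orthogonality to $\mathcal{P}^{k-2}$, the Gauss--Radau matching of value and first derivative at cell endpoints (read on the neighbouring cell for the outside traces, exactly the subtlety you flag), and integration by parts. The packaging, however, is genuinely different. The paper proves the statement for $B^{2}_{K}$ only (declaring $B^{1}_{K}$ analogous), uses that $\Pi^{+}$ preserves $\mathcal{Q}^{k}$ to reduce to the six extremal monomials $x^{k+1},\,x^{k+2},\,x^{k+1}y$ and their mirror images, and verifies each case by a separate explicit computation. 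You instead keep a general monomial $x^{a}y^{b}$ with $a+b\le k+2$, split the projection error as $\eta_{w}=e_{\phi}\psi+\phi_{h}e_{\psi}$, run the cancellation argument once in each variable, and kill every surviving residual with a single degree count; the companion identity for $B^{2}_{K}$ then follows from the reflection $(x,y)\mapsto(-x,-y)$, which swaps the one-sided traces and interchanges $\Pi^{+}$ with $\Pi^{-}$. Your organization buys uniformity (no case enumeration) and makes visible exactly where $k\ge 1$ enters -- the jump-mismatch terms on the left edge require $2k+2\ge k+3$ -- while the paper's enumeration keeps each individual computation shorter and avoids the bookkeeping of distributing traces between the two tensor pieces. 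Both arguments stand or fall on the same delicate point, which you handle correctly: on the left/bottom edges the ``$-$'' trace of the projection error must be taken from the neighbouring element, where the Gauss--Radau conditions annihilate it.
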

\begin{proof}
The proof of the results for $B^{1}_{K}$ and $B^{2}_{K}$ are analogous; therefore we just prove the one for $B^{2}_{K}(\eta_{u},q)$. Let us
consider the rectangular element $K_{ij}=I_{i}\times J_{j}=(x_{\il},
x_{\ir})\times (y_{\jl},y_{\jr})$.  By the definition of
$B^{2}_{K}(\eta_{u},q)$ we have
\begin{align*}
B^{2}_{K}(\eta_{u},q)=&\int_{K_{i,j}}(u-\Pi^{+}u)(q_{xx}+q_{yy})dxdy\\
&-\int_{y_{\jl}}^{y_{\jr}}(u-\Pi^{+}u)(x_{\ir}^{+},y)q_{x}(x_{\ir}^{-},y)
-(u-\Pi^{+}u)(x_{\il}^{+},y)q_{x}(x_{\il}^{+},y)dy\\
&-\int_{x_{\il}}^{x_{\ir}}(u-\Pi^{+}u)(x,y_{\jr}^{+})q_{y}(x,y_{\jr}^{-})
-(u-\Pi^{+}u)(x,y_{\jl}^{+})q_{y}(x,y_{\jl}^{+})dx\\
&+\int_{y_{\jl}}^{y_{\jr}}(u-\Pi^{+}u)_{x}(x_{\ir}^{+},y)q(x_{\ir}^{-},y)
-(u-\Pi^{+}u)_{x}(x_{\il}^{+},y)q(x_{\il}^{+},y)dy\\
&+\int_{x_{\il}}^{x_{\ir}}(u-\Pi^{+}u)_{y}(x,y_{\jr}^{+})q(x,y_{\jr}^{-})
-(u-\Pi^{+}u)_{y}(x,y_{\jl}^{+})q(x,y_{\jl}^{+})dx.
\end{align*}
Since $\Pi^{+}$ is polynomial preserving operator, (\ref{2DBsup}) holds true for every $u\in\mathcal{Q}^{k}(K)$. Therefore, we have to consider the cases $u(x,y)=x^{k+1}, y^{k+1}, x^{k+2}, y^{k+2}, x^{k+1}y, y^{k+1}x$.

Let us start with $u(x,y)=x^{k+1}$. We have $(u-\Pi^{+}u)_{y}(x,y)=0$, by (\ref{2dprojection6}) and (\ref{2dprojection7}), $u(x_{\ir}^{+},y)=\Pi^{+}u(x_{\ir}^{+},y)$, $u_{x}(x_{\ir}^{+},y)=(\Pi^{+}u)_{x}(x_{\ir}^{+},y)$.
Then
\begin{align*}
&\int_{y_{\jl}}^{y_{\jr}}(u-\Pi^{+}u)(x_{\ir}^{+},y)q_{x}(x_{\ir}^{+},y)
-(u-\Pi^{+}u)(x_{\il}^{+},y)q_{x}(x_{\il}^{+},y)dy=0,\\
&\int_{y_{\jl}}^{y_{\jr}}(u-\Pi^{+}u)_{x}(x_{\ir}^{+},y)q(x_{\ir}^{+},y)
-(u-\Pi^{+}u)_{x}(x_{\il}^{+},y)q(x_{\il}^{+},y)dy=0,
\end{align*}
and $\int_{K_{ij}}(u-\Pi^{+}u)q_{xx}dxdy=0$.
Next we integrate by parts
\begin{align*}
&\int_{K_{i,j}}(u-\Pi^{+}u)q_{yy}dxdy\\
=&\int_{x_{\il}}^{x_{\ir}}(u-\Pi^{+}u)(x,y_{\jr}^{-})q_{y}(x,y_{\jr}^{-})
-(u-\Pi^{+}u)(x,y_{\jl}^{+})q_{y}(x,y_{\jl}^{+})dx.
\end{align*}
Therefore, sum all the parts in the definition of $B^{2}_{K}(\eta_{u},q)$, we have
$$B^{2}_{K}(\eta_{u},q)=0.$$
Next, we consider the case $u(x,y)=x^{k+1}y$, in this case $\Pi^{+} u=P_{1hx}^{+}(x^{k+1})y$, and
$$\int_{K_{ij}}(u-\Pi^{+}u)q_{xx}dxdy=\int_{K_{ij}}y(x^{k+1}-P_{1hx}^{+}(x^{k+1}))q_{xx}dxdy=0,$$
and
\begin{align*}
&\int_{K_{i,j}}(u-\Pi^{+}u)q_{yy}dxdy\\
=&\int_{x_{\il}}^{x_{\ir}}y_{\jr}(x^{k+1}-P_{1hx}^{+}(x^{k+1}))q_{y}(x,y_{\jr}^{-})
-y_{\jl}^{+}(x^{k+1}-P_{1hx}^{+}(x^{k+1}))q_{y}(x,y_{\jl}^{+})dx\\
&-\int_{x_{\il}}^{x_{\ir}}(x^{k+1}-P_{1hx}^{+}(x^{k+1}))q(x,y_{\jr}^{-})
-(x^{k+1}-P_{1hx}^{+}(x^{k+1}))q(x,y_{\jl}^{+})dx .
\end{align*}
Then summing all the parts in the definition of $B^{2}_{K}(\eta_{u},q)$,
we have
$$B^{2}_{K}(\eta_{u},q)=0.$$
The proof of the cases $u(x,y)=y^{k+1}, x^{k+2}, y^{k+2}$ and $u(x,y)=y^{k+1}x$ are analogous. This completes the proof of (\ref{2DBsup}).
\end{proof}
\begin{lemma}\label{2Dsupest}
Let $B^{1}_{K}(\eta_{w},p)$ and $B^{2}_{K}(\eta_{u},q)$ defined by (\ref{2D_B1}) and (\ref{2D_B2}). Then we have
\begin{align}
|B^{1}_{K}(\eta_{w},p)|\leq Ch^{k+2}\|w\|_{W^{2k+4,\infty}(\Omega_{h})}\|p\|_{L^{2}(K)},\\
|B^{2}_{K}(\eta_{u},q)|\leq Ch^{k+2}\|u\|_{W^{2k+4,\infty}(\Omega_{h})}
\|q\|_{L^{2}(K)},
\end{align}
where $p,~q\in\mathcal{Q}^{k}(K)$ and the constant $C$ is independent of $h$.
\end{lemma}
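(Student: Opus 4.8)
The plan is to convert the polynomial-exactness of Lemma~\ref{2D_sup1} into an order-of-convergence estimate. I will treat $B^{2}_{K}(\eta_{u},q)$ in detail, the bound for $B^{1}_{K}(\eta_{w},p)$ being obtained by the same argument with $\Pi^{+}$ replaced by $\Pi^{-}$ and the one-sided traces switched, exactly the symmetry already used in the proof of Lemma~\ref{2D_sup1}. Since $\eta_{u}=u-\Pi^{+}u$ depends linearly on $u$ and Lemma~\ref{2D_sup1} gives $B^{2}_{K}(P-\Pi^{+}P,q)=0$ for every $P\in\mathcal{P}^{k+2}(K)$, I may subtract from $u$ any total-degree $k+2$ polynomial without altering $B^{2}_{K}(\eta_{u},q)$. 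Choosing $P$ to be the Taylor polynomial of $u$ about the center of $K_{i,j}$ truncated at degree $k+2$ and setting $R=u-P$, the task reduces to estimating $B^{2}_{K}(R-\Pi^{+}R,q)$, where the remainder satisfies $\|R\|_{L^{\infty}(K)}\leq Ch^{k+3}\|u\|_{W^{k+3,\infty}(K)}$ and $\|\nabla R\|_{L^{\infty}(K)}\leq Ch^{k+2}\|u\|_{W^{k+3,\infty}(K)}$.

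Next I would expand $B^{2}_{K}(R-\Pi^{+}R,q)$ into its single volume term $(R-\Pi^{+}R,\Delta q)_{K}$ and the two boundary terms displayed in the proof of Lemma~\ref{2D_sup1}, and bound each piece by pairing the pointwise smallness of the remainder $R-\Pi^{+}R$ (obtained from the Taylor estimates above together with the $L^{\infty}$ stability of $\Pi^{+}$ from the preceding lemma) against the negative powers of $h$ that appear when $q\in\mathcal{Q}^{k}$ is differentiated. Concretely, the volume term uses the inverse inequality $\|\Delta q\|_{L^{2}(K)}\leq Ch^{-2}\|q\|_{L^{2}(K)}$; the boundary term of the form $\langle (R-\Pi^{+}R)^{+},\nabla q\cdot\textbf{n}\rangle_{\partial K}$ uses the trace--inverse inequality $\|\nabla q\cdot\textbf{n}\|_{L^{2}(\partial K)}\leq Ch^{-3/2}\|q\|_{L^{2}(K)}$; and the term $\langle \nabla(R-\Pi^{+}R)^{+}\cdot\textbf{n},q\rangle_{\partial K}$ uses the trace inequality $\|q\|_{L^{2}(\partial K)}\leq Ch^{-1/2}\|q\|_{L^{2}(K)}$ against the gradient of the remainder. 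In each case the remainder is small enough to absorb these negative powers and leave exactly one surplus factor $h^{k+2}$.

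The step requiring the most care is the bookkeeping of the powers of $h$ in the boundary terms, in particular the control of the gradient trace $\nabla\Pi^{+}R$ on $\partial K$: a naive estimate based only on the $L^{2}$ approximation property of $\Pi^{+}$ would lose the superconvergence and yield merely $h^{k+1}$, so it is essential to use the pointwise Taylor bounds on $R$ together with the $L^{\infty}$ stability of the tensor-product projection from the preceding lemma, combined with an inverse inequality, to certify the extra order. One may equivalently track the error through the directional splitting $I-\Pi^{+}=(I-P_{1hx}^{+})+P_{1hx}^{+}(I-P_{1hy}^{+})$, invoking the endpoint identities (\ref{2dprojection6})--(\ref{2dprojection9}) and the one-dimensional properties of $P_{1h}^{\pm}$ edge by edge; bounding all the resulting one-dimensional pieces uniformly is what yields the (non-sharp) Sobolev index $\|u\|_{W^{2k+4,\infty}(\Omega_{h})}$ appearing in the statement, whereas the cleaner Taylor route consumes only $\|u\|_{W^{k+3,\infty}}$, which the stated norm dominates. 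Finally, the bound for $B^{1}_{K}(\eta_{w},p)$ is identical up to replacing $\Pi^{+}$ by $\Pi^{-}$ and the $+$ traces by $-$ traces, so only the $B^{2}_{K}$ case need be written out.
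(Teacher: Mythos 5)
Your proposal is correct, and its skeleton coincides with the paper's proof: subtract the degree-$(k+2)$ Taylor polynomial of $u$ about the center of $K$, invoke the polynomial exactness of Lemma \ref{2D_sup1} to reduce the estimate to $B^{2}_{K}(R-\Pi^{+}R,q)$ for the Taylor remainder $R$, expand into the volume term plus four boundary terms, and trade the smallness of the remainder against inverse and trace inequalities applied to $q\in\mathcal{Q}^{k}(K)$. The genuine difference lies in how the remainder is measured. The paper applies the $L^{2}$/$H^{1}$ approximation property of $\Pi^{+}$ to the remainder $R_{k+3}$ itself, namely $\|R_{k+3}-\Pi^{+}R_{k+3}\|_{L^{2}(K)}\leq Ch^{k+2}\|R_{k+3}\|_{W^{k+1,\infty}}$ combined with $\|R_{k+3}\|_{W^{k+1,\infty}}\leq Ch^{2}\|u\|_{W^{2k+4,\infty}}$; differentiating the remainder $k+1$ times is precisely what forces the index $2k+4$ in the stated norm. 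You instead bound $R-\Pi^{+}R$ and $\nabla(R-\Pi^{+}R)$ pointwise, using only $\|R\|_{L^{\infty}}\leq Ch^{k+3}\|u\|_{W^{k+3,\infty}}$, $\|\nabla R\|_{L^{\infty}}\leq Ch^{k+2}\|u\|_{W^{k+3,\infty}}$, the $L^{\infty}$ stability of $\Pi^{\pm}$ recorded in the existence lemma, and an inverse inequality to control $\nabla\Pi^{+}R$. Your power counting (volume term $h^{k+4}\cdot h^{-2}$, boundary terms $h^{k+7/2}\cdot h^{-3/2}$ and $h^{k+5/2}\cdot h^{-1/2}$) reproduces exactly the paper's estimates of $T_{1},\dots,T_{5}$, but consumes only $k+3$ derivatives of $u$, so your version is marginally sharper; since $\|u\|_{W^{k+3,\infty}}\leq \|u\|_{W^{2k+4,\infty}}$, it implies the lemma as stated. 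One detail to make explicit in a full write-up: the traces of $\eta_{u}$ appearing in $B^{2}_{K}$ are $+$ traces, so on the right and top edges of $K$ both the remainder and the projection $\Pi^{+}R$ live on the neighboring elements; these lie within $O(h)$ of the expansion center, so all your pointwise bounds persist there, but the $W^{k+3,\infty}$ norm should be taken over $K$ together with its neighbors (or over $\Omega_{h}$, as in the statement) rather than over $K$ alone.
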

\begin{proof}
On each element $K=I_{i}\times J_{j}$, consider the Taylor expansion of $u$ around $(x_{i},y_{j})$
$$u=Tu+R_{k+3},$$
where
\begin{align*}
Tu&=\sum\limits_{l=0}^{k+2}\sum\limits_{m=0}^{l}\frac{1}{(l-m)!m!}\frac{\partial^{l}u(x_{i},y_{j})}{\partial x^{l-m}\partial y^{m}}(x-x_{i})^{l-m}(y-y_{j})^{m},\\
R_{k+3}&=(k+3)\sum\limits_{m=0}^{k+3}\frac{(x-x_{i})^{k+3-m}(y-y_{j})^{m}}{(k+3-m)!m!}\int_{0}^{1}(1-s)^{k+2}
\frac{\partial^{k+3}u(x_{i}^{s},y_{j}^{s})}{\partial x^{k+3-m}\partial y^{m}}ds
\end{align*}
with $x_{i}^{s}=x_{i}+s(x-x_{i})$, $y_{j}^{s}=y_{j}+s(y-y_{j})$. Clearly, $Tu\in \mathcal{P}^{k+2}$ and by Lemma \ref{2D_sup1} we have
$$B^{2}_{K}(Tu-\Pi^{+}(Tu),q)=0,$$
then we have
$$B^{2}_{K}(\eta_{u},q)=T_{1}+T_{2}+T_{3}+T_{4}+T_{5},$$
where
\begin{align*}
T_{1}&=\int_{K_{ij}}(R_{k+3}-\Pi^{+}R_{k+3})(p_{xx}+p_{yy})dxdy,\\
T_{2}&=-\int_{y_{\jl}}^{y_{\jr}}(R_{k+3}-\Pi^{+}R_{k+3})(x_{\ir}^{+},y)p_{x}(x_{\ir}^{-},y)
-(R_{k+3}-\Pi^{+}R_{k+3})(x_{\il}^{+},y)p_{x}(x_{\il}^{+},y)dy,\\
T_{3}&=-\int_{x_{\il}}^{x_{\ir}}(R_{k+3}-\Pi^{+}R_{k+3})(x,y_{\jr}^{+})p_{y}(x,y_{\jr}^{-})
-(R_{k+3}-\Pi^{+}R_{k+3})(x,y_{\jl}^{+})p_{y}(x,y_{\jl}^{+})dx,\\
T_{4}&=\int_{y_{\jl}}^{y_{\jr}}(R_{k+3}-\Pi^{+}R_{k+3})_{x}(x_{\ir}^{+},y)p(x_{\ir}^{-},y)
-(R_{k+3}-\Pi^{+}R_{k+3})_{x}(x_{\il}^{+},y)p(x_{\il}^{+},y)dy,\\
T_{5}&=\int_{x_{\il}}^{x_{\ir}}(R_{k+3}-\Pi^{+}R_{k+3})_{y}(x,y_{\jr}^{+})p(x,y_{\jr}^{-})
-(R_{k+3}-\Pi^{+}R_{k+3})_{y}(x,y_{\jl}^{+})p(x,y_{\jl}^{+})dx.
\end{align*}
which will be estimated one by one below.
 From the approximation properties of the projection $\Pi^{+}$, we have
\begin{align*}
\|R_{k+3}-\Pi^{+}R_{k+3}\|_{L^{2}(K)}\leq Ch^{k+2}\|R_{k+3}\|_{W^{k+1,\infty}(\Omega_{h})},
\end{align*}
and
\begin{align*}
\|R_{k+3}\|_{W^{k+1,\infty}(\Omega_{h})}=\max\limits_{K}
\|R_{k+3}\|_{W^{k+1,\infty}(K)}\leq Ch^{2}\|u\|_{W^{2k+4,\infty}(\Omega_{h})}.
\end{align*}
Combining the above two estimates, we arrive at
\begin{align}
\|R_{k+3}-\Pi^{+}R_{k+3}\|_{L^{2}(K)}\leq Ch^{k+4}\|u\|_{W^{2k+4,\infty}
(\Omega_{h})}.
\end{align}
Similarly, we have that
\begin{align}
\|R_{k+3}-\Pi^{+}R_{k+3}\|_{H^{1}(K)}\leq Ch^{k+3}\|u\|_{W^{2k+4,\infty}(\Omega_{h})}.
\end{align}
It follows from the Cauchy-Schwartz inequality, and the inverse inequality
that
\begin{align*}
|T_{1}|\leq\|R_{k+3}-\Pi^{+}R_{k+3}\|_{L^{2}(K)}\|q_{xx}\|_{L^{2}(K)}
\leq Ch^{k+2}\|u\|_{W^{2k+4,\infty}(\Omega_{h})}\|q\|_{L^{2}(K)}.
\end{align*}
In order to estimate the remaining terms we need to use
the trace inequality to get
$$\|R_{k+3}-\Pi^{+}R_{k+3}\|_{L^{2}(\partial K)}\leq Ch^{k+\frac{7}{2}}\|u\|_{W^{2k+4,\infty}(\Omega_{h})}$$
and
$$\|R_{k+3}-\Pi^{+}R_{k+3}\|_{H^{1}(\partial K)}\leq Ch^{k+\frac{5}{2}}\|u\|_{W^{2k+4,\infty}(\Omega_{h})}$$
Next, by the Cauchy-Schwartz inequality and the inverse inequality,
we arrive at
\begin{align*}
|T_{2}|\leq \|R_{k+3}-\Pi^{+}R_{k+3}\|_{L^{2}(\partial K)}\|q_{x}\|_{L^{2}
(\partial K)}\leq Ch^{k+2}\|u\|_{W^{2k+4,\infty}(\Omega_{h})}\|q\|_{L^{2}(K)} .
\end{align*}
Analogously, we have that
$$
|T_{m}|\leq Ch^{k+2}\|u\|_{W^{2k+4,\infty}
(\Omega_{h})}\|q\|_{L^{2}(K)},~~~m=3,4,5.
$$
The estimates for $B^1(\eta_{u},q)$ now follows by collecting the
results for $T_{m}$, $m=1,2,3,4,5$ obtained above. The proof of Lemma is thus completed.
\end{proof}

Next, we will use these lemmas to prove our final result, Theorem \ref{errorestimate_thm2dfourth}.

\begin{proof}
\textbf{(The proof of Theorem \ref{errorestimate_thm2dfourth}).}
We take $p=\xi_{u}$ and $q=\xi_{w}$ in the error equations
(\ref{2d4errorequation1})-(\ref{2d4errorequation2}), to obtain
\begin{align*}
((\xi_{u})_{t},\xi_{u})_{\Omega_{h}}+(\xi_{w},\xi_{w})_{\Omega_h}
=((\eta_{u})_{t},\xi_{u})_{\Omega_h}+(\eta_{w},\xi_{w})_{\Omega_h}
+\sum\limits_{K}(B^{1}_{K}(\eta_{w},\xi_{u})-B^{2}_{K}(\eta_{u},\xi_{w})).
\end{align*}
Then by the Cauchy-Schwartz inequality and Lemma \ref{2Dsupest}, we have
\begin{align*}
\frac{1}{2}\frac{d}{dt}\|\xi_{u}\|^{2}+\|\xi_{w}\|^{2}\leq Ch^{k+1}
\|\xi_{u}\|^{2}+Ch^{k+1}\|\xi_{w}\|^{2}.
\end{align*}
Next, by Gronwall's inequality and choosing
$u_{h}(0)=\Pi_{h}^{+}u(0)$, we have
\begin{align*}
\|\xi_{u}\|(t)+\int_{0}^{t}\|\xi_{w}\|(t)dt\leq Ch^{k+1},
\end{align*}
and
\begin{align*}
\|e_{u}\|(t)+\int_{0}^{t}\|e_{w}\|dt\leq \|\xi_{u}\|(t)+\int_{0}^{t}\|\xi_{w}\|dt+\|\eta_{u}\|(t)+\int_{0}^{t}\|\eta_{w}\|dt\leq Ch^{k+1},
\end{align*}
where $C$ is a constant independent of $h$ and dependent on $\|u\|_{W^{2k+6,\infty}}$, $\|u_{t}\|_{W^{k+1,\infty}}$ and $t$.
\end{proof}

\section{Numerical results}

In this section, we present numerical examples to verify our
theoretical convergence properties of the DG method for high order
PDEs.

Firstly, we consider the one-dimensional linear fourth and fifth
order time-dependent equations with the periodic boundary condition
in Examples \ref{example1} and \ref{example2}, respectively.
Time discretization is not our major concern in this paper, hence we use
the spectral deferred correction (SDC) \cite{SDC} time discretization
for its simplicity. Our computation is based on the flux choice
(\ref{flux_linear_forth1}) and (\ref{flux_linear_fifth1}),
respectively. The errors and numerical orders of accuracy for
$P^{k}$ elements with $1\leq k\leq 3$ are listed in
Table \ref{table.1} and Table \ref{table.2}. We observe that our
scheme gives the optimal $(k+1)$-th order of the accuracy when $k\geq 1$.
\begin{ex}\label{example1}(Accuracy test for a linear fourth-order problem.)
We consider the following fourth-order time-dependent problem
\begin{align*}
&u_{t}+u_{xxxx}=0,~~~~~~~(x,t)\in[0,2\pi]\times(0,1],\\
&u(x,0)= \sin(x).
\end{align*}
The exact solution is $$u(x,t)=e^{-t} \sin(x).$$
\end{ex}

\begin{table}[!htb]\centering
\caption{\label{table.1}Errors and the corresponding convergence rates
for Example \ref{example1} when using $\mathcal{P}^{k}$ polynomials and SDC time
discretization on a uniform mesh of $N$ cells. Final time $t=1$.}
\vspace{0.3cm}
\begin{tabular}{cccccccccc}
\toprule
&N &    $L^{1}$  & order &  $L^{2} $ & order &$L^{\infty} $  & order  \\
\hline
$\mathcal{P}^{1}$&   10&    2.97E-02&     --&    3.61E-02&     --&    9.45E-02&     --\\
&   20&    7.66E-03&     1.96&    9.31E-03&     1.96&    2.39E-02&     1.98\\
&   40&    1.93E-03&     1.99&    2.35E-03&     1.99&    6.04E-03&     1.99\\
&   80&    4.83E-04&     2.00&    5.88E-04&     2.00&    1.51E-03&     2.00\\
&  160&    1.21E-04&     2.00&    1.47E-04&     2.00&    3.79E-04&     2.00\\
&  320&    3.02E-05&     2.00&    3.68E-05&     2.00&    9.46E-05&     2.00\\\hline
$\mathcal{P}^{2}$&   10&    2.63E-02&     --&    2.92E-02&     --&    4.19E-02&     --\\
&   20&    3.57E-03&     2.88&    3.97E-03&     2.88&    5.70E-03&     2.88\\
&   40&    4.54E-04&     2.98&    5.04E-04&     2.98&    7.18E-04&     2.99\\
&   80&    5.68E-05&     3.00&    6.31E-05&     3.00&    8.98E-05&     3.00\\
&  160&    7.10E-06&     3.00&    7.88E-06&     3.00&    1.12E-05&     3.00\\
&  320&    8.87E-07&     3.00&    9.85E-07&     3.00&    1.40E-06&     3.00\\\hline
$\mathcal{P}^{3}$&   10&    1.54E-03&     --&    1.71E-03&     --&    2.44E-03&     --\\
&   20&    1.40E-04&     3.46&    1.55E-04&     3.46&    2.22E-04&     3.46\\
&   40&    9.35E-06&     3.90&    1.04E-05&     3.90&    1.49E-05&     3.90\\
&   80&    5.99E-07&     3.96&    6.66E-07&     3.96&    9.54E-07&     3.96\\
&  160&    3.76E-08&     3.99&    4.18E-08&     3.99&    5.99E-08&     3.99\\
&  320&    2.36E-09&     4.00&    2.62E-09&     4.00&    3.75E-09&     4.00\\
\hline
\end{tabular}
\end{table}

\begin{ex}\label{example2}(Accuracy test for a linear fifth-order problem.)
We consider the following linear fifth-order time-dependent problem.
\begin{align*}
&u_{t}+u_{xxxxx}=0,~~~~~~~(x,t)\in[0,2\pi]\times(0,1],\\
&u(x,0)= \sin(x).
\end{align*}
The exact solution is $$u(x,t)= \sin(x-t).$$
\end{ex}

\begin{table}[!htb]\centering
\caption{\label{table.2}Errors and the corresponding convergence rates
for Example \ref{example2} when using $\mathcal{P}^{k}$ polynomials and SDC
time discretization on a uniform mesh of $N$ cells. Final time $t=1$.}
\vspace{0.3cm}
\begin{tabular}{cccccccccc}
\toprule
&N &    $L^{1}$  & order &  $L^{2} $ & order &$L^{\infty} $  & order  \\
\hline
$\mathcal{P}^{1}$&   10&    8.13E-02&     --&    9.08E-02&     --&    1.44E-01&     --\\
&   20&    2.22E-02&     1.87&    2.47E-02&     1.88&    3.97E-02&     1.86\\
&   40&    5.68E-03&     1.97&    6.32E-03&     1.97&    1.08E-02&     1.88\\
&   80&    1.43E-03&     1.99&    1.59E-03&     1.99&    2.81E-03&     1.94\\
&  160&    3.57E-04&     2.00&    3.98E-04&     2.00&    7.15E-04&     1.98\\
&  320&    8.92E-05&     2.00&    9.95E-05&     2.00&    1.80E-04&     1.99\\\hline
$\mathcal{P}^{2}$&   10&    7.25E-02&     --&    8.07E-02&     --&    1.14E-01&     --\\
&   20&    9.74E-03&     2.90&    1.08E-02&     2.90&    1.53E-02&     2.90\\
&   40&    1.23E-03&     2.98&    1.37E-03&     2.98&    1.94E-03&     2.98\\
&   80&    1.54E-04&     3.00&    1.71E-04&     3.00&    2.42E-04&     3.00\\
&  160&    1.93E-05&     3.00&    2.14E-05&     3.00&    3.03E-05&     3.00\\
&  320&    2.41E-06&     3.00&    2.68E-06&     3.00&    3.79E-06&     3.00\\\hline
$\mathcal{P}^{3}$&   10&    5.44E-03&     --&    6.04E-03&     --&    8.56E-03&     --\\
&   20&    4.13E-04&     3.72&    4.59E-04&     3.72&    6.49E-04&     3.72\\
&   40&    2.60E-05&     3.99&    2.89E-05&     3.99&    4.08E-05&     3.99\\
&   80&    1.64E-06&     3.99&    1.82E-06&     3.99&    2.58E-06&     3.99\\
&  160&    1.02E-07&     4.00&    1.14E-07&     4.00&    1.61E-07&     4.00\\
&  320&    6.41E-09&     4.00&    7.12E-09&     4.00&    1.01E-08&     4.00\\
\hline
\end{tabular}
\end{table}

\begin{ex}\label{example3}(Accuracy test for a nonlinear fourth-order problem.)
We consider the following nonlinear fourth-order time-dependent problem.
\begin{align*}
&u_{t}+(u^{2}u_{xx})_{xx}=f,~~~~~~~x\in[0,2\pi].
\end{align*}
The source term $f$ is chosen so that the exact solution
is $$u(x,t)=e^{-t} \sin(x).$$
\end{ex}
We test this example by the DG scheme
(\ref{linear_forth_scheme1})-(\ref{linear_forth_scheme2}).
Both errors and orders of accuracy are listed in Table \ref{table.3}.
We again observe that our
scheme gives the optimal $(k+1)$-th order of the accuracy for
this nonlinear problem.

\begin{table}[!htb]\centering
\caption{\label{table.3}Errors and the corresponding convergence rates
for Example \ref{example3} when using $\mathcal{P}^{k}$ polynomials on a uniform
mesh of $N$ cells. Final time $t=0.1$.}
\vspace{0.3cm}
\begin{tabular}{cccccccccc}
\toprule
&N &    $L^{1}$  & order &  $L^{2} $ & order &$L^{\infty} $  & order  \\
\hline
$\mathcal{P}^{1}$
&    4&    1.47E-01&       --&    1.93E-01&       --&    3.97E-01&     --\\
&    8&    6.74E-02&     1.12&    8.10E-02&     1.25&    2.28E-01&     0.80\\
&   16&    1.94E-02&     1.80&    2.58E-02&     1.65&    8.21E-02&     1.47\\
&   32&    5.05E-03&     1.94&    6.36E-03&     2.02&    2.45E-02&     1.75\\
&   64&    1.19E-03&     2.08&    1.41E-03&     2.17&    4.33E-03&     2.50\\
\hline
$\mathcal{P}^{2}$
&    4&    4.85E-02&       --&    6.72E-02&       --&    2.63E-01&      --\\
&    8&    2.63E-03&     4.21&    3.77E-03&     4.16&    1.37E-02&     4.26\\
&   16&    8.22E-04&     1.68&    1.38E-03&     1.45&    5.87E-03&     1.23\\
&   32&    1.19E-04&     2.79&    2.12E-04&     2.71&    1.00E-03&     2.55\\
&   64&    1.55E-05&     2.94&    2.68E-05&     2.99&    1.58E-04&     2.67\\
\hline
$\mathcal{P}^{3}$
&    4&    4.86E-03&       --&    5.91E-03&      --&     1.81E-02&     --\\
&    8&    1.07E-03&     2.19&    1.75E-03&     1.75&    8.99E-03&     1.01\\
&   16&    3.54E-05&     4.92&    6.61E-05&     4.73&    4.42E-04&     4.35\\
&   32&    1.16E-06&     4.93&    2.04E-06&     5.02&    1.68E-05&     4.71\\
&   64&    4.65E-08&     4.64&    6.99E-08&     4.87&    5.99E-07&     4.81\\
\hline
\end{tabular}
\end{table}

\begin{ex}\label{example4}(Accuracy test for a nonlinear fifth-order problem.)
We consider the following nonlinear fifth-order time-dependent problem
\begin{align*}
&u_{t}+(u_{xx})^{3}_{xxx}=f,~~~~~~~x\in[0,2\pi],
\end{align*}
where the source term $f$ is chosen such that the exact solution
is $$u(x,t)=\sin(x-t).$$
\end{ex}
We test this example by the DG scheme
(\ref{linear_fifth_scheme1})-(\ref{linear_fifth_scheme3}).  Both
the errors and the numerical orders of accuracy
are listed in Table \ref{table.4}.
We once again observe the designed $(k+1)$-th order of
accuracy for this nonlinear problem.

\begin{table}[!htb]\centering
\caption{\label{table.4}Errors and the corresponding convergence rates
for Example \ref{example4} when using $\mathcal{P}^{k}$ polynomials on a uniform
mesh of $N$ cells. Final time $t=0.1$.}
\vspace{0.3cm}
\begin{tabular}{cccccccccc}
\toprule
&N &    $L^{1}$  & order &  $L^{2} $ & order &$L^{\infty} $  & order  \\
\hline
$\mathcal{P}^{1}$
&    4&    2.06E-01&      -- &    2.33E-01&       --&    5.05E-01&       --\\
&    8&    5.44E-02&     1.92&    6.94E-02&     1.75&    2.09E-01&     1.28\\
&   16&    1.64E-02&     1.73&    2.01E-02&     1.79&    6.13E-02&     1.77\\
&   32&    3.67E-03&     2.16&    4.47E-03&     2.16&    1.42E-02&     2.11\\
&   64&    1.19E-03&     1.62&    1.44E-03&     1.63&    4.17E-03&     1.77\\
\hline
$\mathcal{P}^{2}$
&    4&    3.06E-02&     --&      4.39E-02&     --&      1.72E-01&       --\\
&    8&    4.14E-03&     2.88&    6.34E-03&     2.79&    2.80E-02&     2.62\\
&   16&    4.01E-04&     3.37&    5.56E-04&     3.51&    2.44E-03&     3.52\\
&   32&    4.73E-05&     3.08&    6.78E-05&     3.04&    3.29E-04&     2.89\\
&   64&    5.57E-06&     3.09&    8.34E-06&     3.02&    4.07E-05&     3.02\\
\hline
$\mathcal{P}^{3}$
&    4&    4.91E-03&     --&    6.45E-03&     --&    2.00E-02&    --\\
&    8&    1.42E-04&     5.12&    1.96E-04&     5.04&    1.03E-03&     4.28\\
&   16&    8.95E-06&     3.98&    1.25E-05&     3.98&    6.73E-05&     3.93\\
&   32&    5.06E-07&     4.15&    7.38E-07&     4.08&    4.21E-06&     4.00\\
\hline
\end{tabular}
\end{table}

\bigskip

The last example we consider is a two-dimensional fourth-order problem.
\begin{ex}\label{example5} (Accuracy test for a two-dimensional  linear fourth-order problem.)
We consider the following fourth-order time-dependent problem with
the periodic boundary condition
\begin{align*}
&u_{t}+\Delta^{2}u=0,~~~~~~~(x,y)\in[0,2\pi]\times[0,2\pi],\\
&u(x,0)= \sin(x+y).
\end{align*}
The exact solution is $$u(x,t)=e^{-4t}\sin(x+y).$$  Our computation is
based on the flux choice (\ref{2d_fourth_flux1}). The errors and
numerical orders of accuracy for the $\mathcal{Q}^{k}$ elements
with $1\leq k\leq 3$ are listed in Table \ref{table.5}. We observe
that our scheme gives the optimal $(k+1)$-th order of the accuracy
when $k\geq 1$.
\end{ex}

\begin{table}[!htb]\centering
\caption{\label{table.5}Errors and the corresponding convergence rates
for Example \ref{example5} when using $\mathcal{Q}^{k}$ polynomials on a uniform
mesh of $N\times N$ cells. Final time $t=1$.}
\vspace{0.3cm}
\begin{tabular}{cccccccccc}
\toprule
&$N\times N$&    $L^{1}$  & order &  $L^{2} $ & order &$L^{\infty} $  & order  \\
\hline
$\mathcal{Q}^{1}$
& $4\times4$&      1.67E-01&      --&    2.46E-01&       --&     1.13E+00&     --\\
& $8\times8$&      5.29E-02&     1.66&    7.93E-02&     1.63&    4.04E-01&     1.49\\
& $16\times16$&    1.25E-02&     2.08&    2.03E-02&     1.97&    1.07E-01&     1.92\\
& $32\times32$&    3.02E-03&     2.05&    5.09E-03&     2.00&    2.70E-02&     1.98\\
& $64\times64$&    7.46E-04&     2.02&    1.27E-03&     2.00&    6.78E-03&     2.00\\\hline
$\mathcal{Q}^{2}$&$2\times2$&    3.41E-01&     --&    5.14E-01&     --&    2.55E+00&    --\\
&$4\times4$&    4.49E-02&     2.92&    7.29E-02&     2.82&    5.20E-01&     2.29\\
& $8\times8$   &    5.41E-03&     3.05&    9.03E-03&     3.01&    6.73E-02&     2.95\\
& $16\times16$ &    6.70E-04&     3.01&    1.12E-03&     3.01&    8.45E-03&     2.99\\
&  $32\times32$ &    8.35E-05&     3.00&    1.40E-04&     3.00&    1.06E-03&     3.00\\
&$64\times64$ & 1.04E-05&     3.00&    1.75E-05&     3.00&    1.32E-04&     3.00\\
\hline
\end{tabular}
\end{table}

\section{Concluding remarks}

In this paper, we have constructed a new class of
discontinuous Galerkin methods combining the LDG and UWDG methods for solving high order PDEs, namely time-dependent PDEs
with high order spatial derivatives.  The idea is to
rewrite the PDE into a lower order system, but not to
a system with only first order spatial derivatives as in
LDG methods.  The ideas in designing numerical fluxes to
obtain stable and accurate DG schemes from both the LDG
schemes and the UWDG schemes, including the usage of
alternating and upwinding numerical fluxes when appropriate,
are then used to obtain
stable and optimally convergent DG schemes for a wide
variety of linear and nonlinear PDEs with high order
spatial derivatives in both one and two spatial dimensions.
The main advantage of our method over the LDG method is that we
have introduced fewer auxiliary variables, thereby reducing memory
and computational costs.  The main advantage of our method over
the UWDG method is that no internal penalty terms are necessary
in order to ensure stability for both even and odd order PDEs.
Detailed algorithm formulation, stability analysis and
optimal $L^{2}$ error estimates are given for several examples, including
fourth order linear and nonlinear equations in one dimension and
a fourth order linear equation in two dimension, and
fifth order linear and nonlinear wave equations in one dimension.
In our error estimates, a key ingredient is the study
of the relationship between the derivative and the element interface
jumps of the numerical solution and the auxiliary variable
numerical solution of the derivative.  With this relationship and
by using the discrete Sobolev and Poincar\'{e} inequalities, we
can obtain optimal error estimates for both even order
diffusive PDEs and odd order wave PDEs.  Numerical examples
are provided both for linear and nonlinear equations and both in
one dimension and in two dimensions, to verify the theoretical
results.  Extension of the optimal error estimates to the nonlinear
equations is highly nontrivial and is left for future work.

\end{document}